\newtheorem{theorem}[equation]{Theorem}
\newtheorem{conjecture}[equation]{Conjecture}
\newtheorem{lemma}[equation]{Lemma}
\newtheorem{prop}[equation]{Proposition}
\newtheorem{corollary}[equation]{Corollary}
\newtheorem{definition}[equation]{Definition}
\theoremstyle{remark}
\newtheorem{remark}[equation]{Remark}
\newtheorem{notation}[equation]{Notation}
\newtheorem{convention}[equation]{Convention}
\newtheorem{assumption}[equation]{Assumption}
\numberwithin{equation}{section}
\DeclareMathOperator{\sgn}{sgn}
\newcommand{\Sim}{\displaystyle\operatornamewithlimits{\sim}}
\newcommand{\utilde}{{\widetilde{u}}}
\newcommand{\C}{\mathbb{C}}
\newcommand{\R}{\mathbb{R}}
\newcommand{\B}{\mathbb{B}}
\newcommand{\N}{\mathbb{N}}
\newcommand{\D}{\mathbb{D}}
\newcommand{\cat}{\mathbb{K}}
\newcommand{\K}{\mathbb{K}}
\newcommand{\Rcal}{\mathcal{R}}
\newcommand{\Jcal}{\mathcal{J}}
\newcommand{\Lcal}{\mathcal{L}}
\newcommand{\Lcalhatcat}{\widehat{\Lcal}_{\cat}}
\newcommand{\group}{{\mathscr{G}  }}
\newcommand{\grouptheta}{\group^{\varpi}}
\newcommand{\supp}{\operatorname{supp}}
\newcommand{\groupdisk}{\group}
\newcommand{\aunder}{\underline{a}}
\newcommand{\Bcal}{\mathcal{B}}
\newcommand{\sym}{sym}
\newcommand{\Thetacyl}{Y_{\cyl}}
\newcommand{\psicut}{{\psi_{\mathrm{cut}}}}
\newcommand{\Psibold}{{\boldsymbol{\Psi}}}
\newcommand{\Sph}{\mathbb{S}}
\newcommand{\what}{\hat{w}}
\newcommand{\id}{\operatorname{Id}}
\newcommand{\gaux}{g_A}
\newcommand{\gtilde}{\widetilde{g}}
\newcommand{\Etilde}{\widetilde{E}}
\newcommand{\Mcal}{\mathcal{M}}
\newcommand{\Rcap}{\mathsf{R}}
\newcommand{\Rcapunder}{\underline{\Rcap}}
\newcommand{\phicat}{{\varphi_{\mathrm{cat}}}}
\newcommand{\phigraph}{{\overline{\varphi}}}
\newcommand{\arccosh}{\operatorname{arccosh}}
\newcommand{\dbold}{{\mathbf{d}}}
\newcommand{\varphigl}{\varphi^{gl}}
\newcommand{\cyl}{\mathrm{Cyl}}
\newcommand{\cyla}{\cyl_{[0, a]}}
\newcommand{\vunder}{\underline{v}}
\newcommand{\Sigmacir}{\mathring{M}}
\newcommand{\Sigmacircat}{\mathring{M}_{\K}}
\newcommand{\Sigmacird}{\Sigmacir_{\D}}
\newcommand{\Mcir}{\mathring{M}}
\newcommand{\kappahat}{\widehat{\kappa}}
\newcommand{\kappacir}{\mathring{\kappa}}
\newcommand{\Xcir}{\mathring{X}}
\newcommand{\nucir}{\mathring{\nu}}
\newcommand{\gcir}{\mathring{g}}
\newcommand{\Thetacir}{\mathring{\Theta}}
\newcommand{\ucir}{\mathring{u}}
\newcommand{\Ucir}{\mathring{U}}
\newcommand{\scir}{\mathring{s}}
\newcommand{\sigmacir}{\mathring{\sigma}}
\newcommand{\Mcird}{\Mcir_{\D}}
\newcommand{\rhocir}{\mathring{\rho}}
\newcommand{\Acir}{\mathring{A}}
\newcommand{\Hcir}{\mathring{H}}
\newcommand{\Dp}{\D^+}
\newcommand{\Qtilde}{\widetilde{Q}}
\newcommand{\Psicat}{\Psi_{\cat}}
\newcommand{\Psidisk}{\Psi_{\D}}
\newcommand{\Sigmacat}{M_{\cat}}
\newcommand{\Sigmadisk}{M_{\D}}
\newcommand{\Rcalhat}{\widehat{\Rcal}}
\newcommand{\betahat}{\widehat{\beta}}
\newcommand{\gammahat}{\widehat{\gamma}}
\newcommand{\picir}{\mathring{\pi}}
\newcommand{\graph}{\operatorname{Graph}}
\newcommand{\Sigmad}{M_{\D}}
\newcommand{\tauunder}{\underline{\tau}}
\newcommand{\vhat}{\hat{v}}
\newcommand{\rr}{\mathrm{r}}
\newcommand{\dom}{\mathrm{dom}}
\newcommand{\inj}{{\mathrm{inj}}}
\newcommand{\DDD}{\mathsf{D}}
\newcommand{\Utilde}{\widetilde{U}}  
\newcommand{\catbr}{{\check{K}}}
\newcommand{\phicataux}{\underline{\varphi}_{\mathrm{cat}}}
\newcommand{\skernel}{\mathscr{K}}
\newcommand{\skernelv}{\widehat{\mathscr{K}}}
\newcommand{\Fzeta}{\mathcal{F}_\zeta}
\newcommand{\phibreve}{\breve{\phi}}
\newcommand{\zetabreve}{\breve{\zeta}}
\newcommand{\Mbreve}{\breve{M}}
\newcommand{\Mhat}{\widehat{M}}
\newcommand{\tunder}{\underline{t}}
\newcommand{\Fzetacir}{\mathring{\mathcal{F}}_{\zeta}}
\newcommand{\Fcalcir}{\mathring{\mathcal{F}}}
\newcommand{\Pcir}{\mathring{P}}
\newcommand{\Ttilde}{\widetilde{T}}
\newcommand{\Pprime}{P_{\varpi}'}
\begin{document}

\title[Immersed Free Boundary Annuli]{Free Boundary Minimal Annuli Immersed in the Unit 3-Ball}

\author[N.~Kapouleas]{Nikolaos~Kapouleas}
\author[P.~McGrath]{Peter~McGrath}

\address{Department of Mathematics, Brown University, Providence,
RI 02912}  \email{nicolaos\_kapouleas@brown.edu}

\address{Department of Mathematics, North Carolina State University, Raleigh NC 27695} 
\email{pjmcgrat@ncsu.edu}

\date{\today}

\begin{abstract}
Using the linearized doubling methodology we carry out a PDE gluing construction of a 
discrete family of non-rotational properly immersed free boundary minimal annuli in the Euclidean unit 3-ball.   
The surfaces we construct resemble equatorial disks joined by half-catenoidal bridges at the boundary.
\end{abstract}
\maketitle

\section{Introduction}
\label{S:intro}
\nopagebreak

\subsection*{The general framework}
\label{s:framework} 
$\phantom{ab}$
\nopagebreak

The simpest examples of properly embedded free boundary minimal (FBM) surfaces in the Euclidean three-ball $\B^3$ are the equatorial disk and the critical catenoid. 
Nitsche in 1985 proved that the equatorial disk is the only FBM immersed disk in $\B^3$ by using Hopf-differential methods \cite{Nitsche}. 
More recently Fraser-Schoen proved that this holds in $\B^n$ for any $n>3$ \cite{fraser-disk}. 
Fraser-Li formalized the following conjecture which is analogous to the Lawson conjecture for embedded minimal tori in the round three-sphere. 

\begin{conjecture}[Fraser and Li, \cite{FraserLi}]
\label{Ccc}
Up to congruence, the critical catenoid is the only properly embedded free boundary minimal annulus in $\B^3$.
\end{conjecture}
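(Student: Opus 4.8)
We stress at the outset that Conjecture~\ref{Ccc} is the free boundary analogue of the Lawson conjecture and is, to our knowledge, open; moreover the construction carried out in this paper shows that dropping embeddedness genuinely enlarges the family, so that hypothesis is essential. What follows is a plan of attack, not a proof. The plan is to transplant Brendle's maximum-principle proof of the Lawson conjecture from the round $3$-sphere to the unit ball with its free boundary condition. The first step is a normalization. Let $\Sigma\hookrightarrow\B^3$ be a properly embedded FBM annulus, with unit normal $\nu$ and second fundamental form $A$; by the boundary regularity theory for free boundary minimal surfaces, $\Sigma$ is smooth up to $\partial\Sigma\subset\Sph^2=\partial\B^3$, meets $\Sph^2$ orthogonally, and hence $\partial\Sigma$ is a line of curvature of $\Sigma$ along which the position vector $X$ coincides with the outward unit conormal. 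The traceless part of $A$ defines a holomorphic quadratic differential $\Phi$ on $\Sigma$ which, by the orthogonality condition, is real along $\partial\Sigma$; doubling across $\partial\Sigma$ turns $\Phi$ into a holomorphic quadratic differential on a torus, hence either nowhere zero or identically zero. The latter would force $\Sigma$ to be totally geodesic, i.e.\ an equatorial disk, contradicting the annular topology; therefore $\Phi$, and with it $|A|$, is nowhere zero on $\overline{\Sigma}$. This plays the role of Brendle's nonvanishing of $|A|$ and is what lets the quotient below be defined up to the boundary.

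Next I would introduce, as in Brendle's argument, a two-point function
\[
 Z(x,y)\;=\;\frac{|A|(x)\;N(x,y)}{D(x,y)},\qquad x,y\in\overline{\Sigma},
\]
where $N$ and $D$ are $O(3)$-invariant quantities built from $x$, $y$, $\nu(x)$, $\nu(y)$ and the functions $|x|^2-1$, $|y|^2-1$ — schematically from $|x-y|^2$, $\langle\nu(y),x-y\rangle$ and $\langle\nu(x),\nu(y)\rangle$ — chosen so that (i) $D>0$ on $\overline{\Sigma}\times\overline{\Sigma}$; (ii) $Z$ extends smoothly across the diagonal, with $Z(x,x)$ equal to $|A|(x)$ times a geometric weight; and (iii) $Z$ is \emph{constant} on the critical catenoid. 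Since $|A|$ is not constant on the critical catenoid, unlike $|A|$ on the Clifford torus, requirement (iii) already forces the weight in (ii) to be nontrivial, and engineering $N,D$ so that all three hold at once — compatibly, in particular, with the free boundary geometry along $\partial\Sigma$ — is the first genuine technical hurdle, and is meaningfully harder than in the closed case because the model surface is far less symmetric.

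With such a $Z$ in hand one sets $C:=\sup_{\overline{\Sigma}\times\overline{\Sigma}}Z$ and examines where it is attained. If the supremum is attained at an interior point $(x_0,y_0)$ with $x_0\neq y_0$, then evaluating the linearized equation satisfied by $Z$ there and inserting the minimal surface equation together with the Codazzi identities — following Brendle's computation, with the Euclidean structure of $\B^3$ replacing the round metric of $\Sph^3$ — should produce an elliptic differential inequality for $C-Z$ with no zeroth-order term, forcing $Z\equiv C$ by the strong maximum principle; tracing the equality cases would then identify $\Sigma$ with the critical catenoid. If instead the supremum is attained on the diagonal, one expects a Bochner/Simons-type argument applied to the weighted curvature quantity $Z(x,x)$ to force $\Sigma$ to be the critical catenoid as well. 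So far this is a faithful translation of the closed proof.

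The main obstacle, and what I expect to be the crux of the whole matter, is the genuinely new case in which the supremum of $Z$ is attained with $x_0$ or $y_0$ on $\partial\Sigma$. Here one must run a Hopf boundary-point argument: show that the outward conormal derivative of $C-Z$ at a boundary maximum is strictly negative unless $Z\equiv C$. The available leverage is that along $\partial\Sigma$ the surface meets the umbilic sphere $\Sph^2$ orthogonally, so that $X$ is the conormal there and the conormal derivatives of $\nu$, of $|A|$, and of each two-point ingredient of $Z$ are all expressible through $A|_{\partial\Sigma}$; one needs the algebra of $N$ and $D$ to conspire so that these combine with the right sign. Making that happen — equivalently, pinning down the correct free boundary analogue of Brendle's function — is precisely where the difficulty lies, and is why the conjecture has so far resisted this line. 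A complementary approach worth pursuing in tandem is the Steklov-eigenvalue route of Fraser and Schoen, via the conjectured extremality of the normalized first Steklov eigenvalue $\sigma_1(\Sigma)\,\mathrm{Length}(\partial\Sigma)$ on the critical catenoid among FBM annuli; but a sharp rigidity statement there is itself open beyond symmetric cases, so this merely relocates the difficulty rather than removing it.
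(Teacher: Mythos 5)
There is nothing to check your argument against: the statement you were asked about is Conjecture~\ref{Ccc}, which the paper attributes to Fraser and Li and does \emph{not} prove --- it is stated precisely as an open problem, with only partial results cited (e.g.\ the antipodally symmetric case of Kusner--McGrath). The paper's own contribution is orthogonal to the conjecture: it constructs properly \emph{immersed}, non-embedded FBM annuli, which is why embeddedness in the conjecture's hypothesis is essential, as you correctly note. So the right answer here is simply that no proof exists in the paper, and you were right not to manufacture one.

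That said, your submission is, by its own admission, a research program rather than a proof, and every load-bearing step is left open. The preliminary normalization (boundary regularity, $\partial\Sigma$ a line of curvature, the doubled Hopf differential on the torus forcing $|A|\neq 0$ up to the boundary) is sound and standard. But the two-point function $Z$ is never constructed --- you only list properties (i)--(iii) it should satisfy and acknowledge that reconciling them with the non-constancy of $|A|$ on the critical catenoid is already nontrivial; the interior maximum-principle computation is asserted ``should produce an elliptic differential inequality'' without being carried out; the diagonal case is waved at via ``a Bochner/Simons-type argument''; and the genuinely new boundary case (the Hopf boundary-point analysis when the maximum of $Z$ sits on $\partial\Sigma$) is explicitly identified as unresolved. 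Since that last case is exactly where the free boundary problem differs from Brendle's closed setting, the gap is not a technicality but the entire content of the conjecture. As a candid assessment of a possible line of attack your text is reasonable; as a proof of the statement it establishes nothing, which is consistent with the statement's status in the paper as a conjecture.
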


Although the Lawson conjecture was proven in a celebrated result of Brendle \cite{brendle}, 
only partial results \cite{FSinvent, McGrath, KusnerMcGrath, Seo} towards Conjecture \ref{Ccc} are known.  
In particular, Kusner and McGrath \cite{KusnerMcGrath} have verified Conjecture \ref{Ccc} under the additional hypothesis that the embedded annulus is antipodally symmetric.
There have been claims that the critical catenoid and its coverings are the only immersed free boundary minimal annuli in $\B^3$ \cite{Nitsche, Nadirashvili, CCLiu, Aram}.
In this article we construct FBM immersed annuli in $\B^3$ contradicting such claims. 
Before we proceed, we mention that the FBM surfaces constructed in this article resemble 
surfaces produced recently using a beautiful classical construction (with totally different methodology from our methodology) by Fern\'andez-Hauswirth-Mira \cite{Fernandez}. 
There is some overlap between the broad ideas of the two constructions although they were developed independently and before \cite{Fernandez} appeared. 

\subsection*{Brief discussion of the results}
\label{s:results} 
$\phantom{ab}$
\nopagebreak

Our constructions are partially motivated by our expectation that there is a one-parameter continuous family of periodic immersed FBM strips,  
analogous in some ways to the one-parameter continuous family of periodic CMC surfaces in Euclidean three-space discovered by Delaunay in 1841. 
Instead of the translational period the Delaunay surfaces have, the period of these FBM strips would be a rotation $\Rcap^{\pi+2\varpi}_\ell$ about an axis $\ell$ of angle $\pi+2\varpi$;  
and instead of the rotational invariance of the Delauny surfaces, they would only have reflectional symmetry with respect to a plane $\ell^\perp$ perpendicular to $\ell$ 
(see \ref{Eell}, \ref{drot}, \ref{dLD} and \ref{Tmain}). 
When $\varpi$ is a rational multiple of $\pi$ the immersion would factor through an annulus providing the desired examples. 

The embedded Delaunay surfaces (also called \emph{unduloids}) interpolate between a cylinder and a ``necklace'' of spheres.  
In analogy we expect that the FBM strips interpolate between the universal cover of the critical catenoid and a ``necklace'' of equatorial disks. 
We were successful in constructing a continuous family of immersed FBM strips as small perturbations of the universal cover of the critical catenoid 
but establishing that $\varpi/\pi$ is not an irrational constant seems challenging. 
Concentrating at the other end of the moduli space we construct periodic FBM strips resembling a sequence of equatorial disks joined by small half-catenoidal bridges at the boundary.  
$\varpi$ can then be prescribed and so we obtain the desired immersed annuli as described below. 

The main results of this article are presented in Theorem \ref{Tmain} and Corollary \ref{Cmain}. 
In Theorem \ref{Tmain} we prove the existence of the fundamental domain $\Mbreve$ of our FBM strips or annuli.  
The fundamental domain $\Mbreve$ is a smooth disc with corners and in Corollary \ref{Cmain} we prove that $\Mbreve$ generates the whole strip or annulus 
by successive reflections with respect to planes through $\ell$. 

The fundamental domain $\Mbreve$ resembles an equatorial half-disk with half of a half-catenoidal (at the boundary $\partial\B^3=\Sph^2$) bridge attached. 
The part resembling an equatorial-half disk is a small perturbation of an actual equatorial half-disk $\D^+$ tilted by an angle $\varpi$ about $\ell$  
($\D^+$ is separated from its other half by $\ell$ and $\ell\cap\B^3 \subset \partial \D^+$).   
We call $P$ the equatorial plane of $\D^+$ and $\Pprime := \Rcap^{\frac{\pi}{2}+\varpi}_\ell P$ 
the orthogonal plane to $P$ through $\ell$ rotated by an angle $\varpi$ about $\ell$ (see \ref{dPprime}). 
The half-catenoidal bridge resembles the intersection with $\B^3$ of a small truncated catenoid positioned with its waist on $P$ and centered at 
$p$ where $\{p\}=\Sph^2\cap\ell^\perp\cap\partial\D^+$. 

The boundary $\partial\Mbreve$ is the union of the following four smooth arcs. 
First, $\Mbreve\cap\Pprime$ where $\Mbreve$ and $\Pprime$ meet orthogonally.
Second, $\Mbreve\cap P$, which is the waist of the half-catenoidal bridge, where $\Mbreve$ and $P$ meet orthogonally. 
Finally, the two connected components of $\Mbreve\cap\Sph^2$, which are exchanged by the reflection $\Rcapunder_{\ell^\perp}$ with respect to $\ell^\perp$ (see \ref{dgroupd}), 
where $\Mbreve$ and $\Sph^2$ meet orthogonally.   
$\Mbreve$ is therefore a smooth disc with four corners at the boundary of angle $\pi/2$ each. 
Reflections with respect to either $P$ or $\Pprime$ extend then $\Mbreve$ 
smoothly to a domain which can give the whole strip or annulus by applying successively the rotation $\Rcap^{\pi+2\varpi}_\ell$.  

\subsection*{Outline of strategy and main ideas}
\label{s:strategy} 
$\phantom{ab}$
\nopagebreak

The construction follows the linearized doubling (LD) methodology introduced in \cite{kap} and further developed in \cite{kap-mcg-cpam, KapMcGLDG}, appropriately modified.  
Under this methodology one starts with a base surface $\Sigma$ which is minimal and on which LD solutions are constructed. 
LD solutions are singular solutions of the Jacobi equation with logarithmic singularities at finitely many singular points. 
By attaching then small catenoidal bridges to the graphs of slighlty modified LD solutions at the vicinity of the singular points one constructs initial surfaces 
which are approximately minimal. 
Under suitable assumptions one of the initial surfaces can be corrected to minimality providing a minimal doubling of $\Sigma$. 

The LD approach has been already modified in \cite[Section 12]{KapMcGLDG} along the lines of \cite{KapWiygul} 
to apply to the construction of FBMS doublings of the critical catenoid.  
In the current construction however we face two new difficulties. 
First, 
there is no suitable minimal base surface for the construction of $\Mbreve$.    
Second, the half-catenoidal bridges are at the boundary. 
We resolve the first difficulty by allowing base surfaces which are not minimal. 
More precisely we use a base surface $\Sigma$ which interpolates between $\D^+$ in the vicinity of $p$ 
and $\D^+$ tilted by an angle $\varpi$ in the vicinity of $\ell$ (see \ref{dbase} and \ref{Lbprop}). 

The second dificulty is resolved by allowing singular points of the LD solutions at the boundary (see \ref{dLD}) and modifying the construction accordingly. 
In particular because of the boundary conditions we do not need to consider the first harmonics on the parallel half-circles of the half-catenoidal bridges vanishing in the vicinity of the boundary.
Because of the symmetry with respect to $\ell^\perp$ we do not need to consider the orthogonal first harmonics either. 
This results in a simplified mismatch (see \ref{Lmismatch}) with a constant term only and no linear terms. 
Similarly there is no need to tilt the half-catenoidal bridges simplifying the construction of the pre-initial surfaces (see \ref{dpreinit}).   
On the other hand a new term $\vhat$ has to be added to correct for the mean curvature of base surface. 

We summarize now the various steps of the construction. 
We first fix $\varpi$ and construct a family of LD solutions on $\D^+$ (given in closed form) parametrized by a single parameter $\zeta$ (see \ref{dLDf}, \ref{dLD} and \ref{Lldexist}). 
The LD solutions on $\D^+$ are then transplanted to the base surface $\Sigma$ and appropriately modified using $\vhat$ as discussed above and $\vunder$ in an obstruction space $\skernelv$ 
(see \ref{dobs}) to give a family of functions (similar to LD solutions) $\phigraph $ on $\Sigma$.  
The strength of the singularity of  $\phigraph$ is comparable to the period controling parameter $\varpi$ (see \ref{Etau}).

The graphs of the functions $\phigraph$ define the pre-initial surfaces $\Sigmacir : = \Sigmacir[\zeta]$ away from the singular point $p$ (see \ref{dpreinit}); 
note that following \cite{KapWiygul} the graphs are taken with respect to an auxiliary metric as in \ref{dgaux} which ensures good behavior at the boundary. 
In the vicinity of $p$ the pre-initial sutfaces contain an appropriate half of a half-catenoidal bridge constructed as in \cite{KapWiygul} 
and glued to the previous part in a fashion similar to \cite[Definition 3.17]{KapMcGLDG}. 
The pre-initial surfaces $\Sigmacir$ are estimated in \ref{Lmcirest};  
they are not designed to intersect $\Sph^2$ orthogonally and this is corrected in the definition of the initial surfaces (see \ref{dinit}) 
by using a simple device which converts the boundary error to mean curvature in the interior as in \cite{KapWiygul}.  
Finally one of the pre-initial surfaces is corrected by a fixed point theorem argument as usual to provide the fundamental domain $\Mbreve$ in \ref{Tmain}. 

\subsection*{Organization of the presentation}
$\phantom{ab}$
\nopagebreak

Besides the Introduction the article has seven more Sections and one Appendix. 
In Section 2, we review definitions and notation related to the objects we are interested in.
In Section 3, we study LD solutions on the half-disk $\Dp$: after defining them in \ref{Lmismatch}, studying the mismatch in \ref{Lmismatch}, 
and establishing appropriate existence and uniqueness properties in \ref{Lldexist}, in \ref{dLDf} we parametrize the family of LD solutions used to construct the pre-initial surfaces.  
Section 4 is concerned with the base surfaces: in \ref{dbase} we define the base surface $\Sigma = \Sigma[\varpi]$ by bending the half-disk $\Dp$ in the vicinity of its diameter, 
and in \ref{Lbase}, we estimate geometric quantities on $\Sigma[\varpi]$.  
In Sections \ref{Spreinit} and \ref{Sinit} we define and estimate geometric quantities on the pre-initial and initial surfaces, 
and in Section \ref{S:linearized} we solve the linearized equation on the initial surfaces.  
In the last Section 8 of this article, in \ref{Tmain} and in \ref{Cmain} we state and prove the main results. 
Finally in the Appendix we collect some needed facts about graphs in the auxiliary metric.



\section{Notation and Conventions}
\label{S:notation}
\nopagebreak

\subsection*{General notation and conventions}
\label{sub:not}
\nopagebreak

\begin{notation}
\label{NT}
For $(N, g)$ is a Riemannian manifold, $\Sigma \subset N$ a two-sided hypersurface equipped with a (smooth) unit normal $\nu$, and $\Omega \subset \Sigma$, we introduce the following notation, where any of $N, g, \Sigma$, or $\Omega$ may be omitted when clear from context. 
\begin{enumerate}[label={(\roman*)}]
\item For $A\subset N$ we write $\dbold^{N, g}_A$ for the distance function from $A$ with respect to $g$ 
and we define the \emph{neighborhood of $A$ of radius $\delta>0$} by
$D^{N, g}_A(\delta):=\left \{p\in N:\dbold^{N, g}_A(p)<\delta\right\}. $  
If $A$ is finite we may just enumerate its points in both cases, for example if $A=\{q\}$ we write $\dbold_q(p)$. 
\item We denote by $\exp^{N, g}$ the exponential map, 
by $\dom(\exp^{N,g}) \subset TN$ its maximal domain, and by $\inj^{N, g}$ the injectivity radius of $(N,g)$.  Similarly by $\exp_p^{N, g}$, $\dom(\exp_p^{N,g})$ and $\inj_p^{N, g}$ the same at $p\in N$.
\item 
\label{Dpertimm}
Given also a vector field $V$ defined along $\Omega$ satisfying $V_p \in \dom(\exp^{N,g})$ for each $p \in \Omega$, 
we define 
\begin{align*}
X_{\Omega, V}^{N, g} : \Sigma \rightarrow N
\quad \text{ by } \quad
X_{\Omega, V}^{N, g} = \exp^{N, g} \circ V \circ I_{\Omega}^N,
\end{align*}
where $I_\Omega^N$ denotes the inclusion map of $\Omega$ in $N$. 
\item With $V$ and $X^{N,g}_{\Omega, V}$ as above, we use the notation
	\begin{align*}
	\graph^{N,g}_{\Omega}( V) : = X^{N,g}_{\Omega, V}(\Omega).
	\end{align*}
\end{enumerate}
\end{notation}

Our arguments require extensive use of cut-off functions and the following will 
be helpful. 
\begin{definition}
\label{DPsi} 
We fix a smooth function $\Psi:\R\to[0,1]$ with the following properties:
\begin{enumerate}[label=\emph{(\roman*)}]
\item $\Psi$ is nondecreasing.

\item $\Psi\equiv1$ on $[1,\infty)$ and $\Psi\equiv0$ on $(-\infty,-1]$.

\item $\Psi-\frac12$ is an odd function.
\end{enumerate}
\end{definition}

Given $a,b\in \R$ with $a\ne b$,
we define smooth functions
$\psicut[a,b]:\R\to[0,1]$
by
\begin{equation}
\label{Epsiab}
\psicut[a,b]:=\Psi\circ L_{a,b},
\end{equation}
where $L_{a,b}:\R\to\R$ is the linear function defined by the requirements $L_{a,b}(a)=-3$ and $L_{a, b}(b)=3$.

Clearly then $\psicut[a,b]$ has the following properties:
\begin{enumerate}[label={(\roman*)}]
\item $\psicut[a,b]$ is weakly monotone.

\item 
$\psicut[a,b]=1$ on a neighborhood of $b$ and 
$\psicut[a,b]=0$ on a neighborhood of $a$.

\item $\psicut[a,b]+\psicut[b,a]=1$ on $\R$.
\end{enumerate}

Suppose now we have two sections $f_0,f_1$ of some vector bundle over some domain $\Omega$.
(A special case is when the vector bundle is trivial and $f_0,f_1$ real-valued functions).
Suppose we also have some real-valued function $d$ defined on $\Omega$.
We define a new section 
\begin{equation}
\label{EPsibold}
\Psibold\left [a,b;d \, \right](f_0,f_1):=
\psicut[a,b ]\circ d \, f_1
+
\psicut[b,a]\circ  d \, f_0.
\end{equation}
Note that
$\Psibold[a,b;d\, ](f_0,f_1)$
is then a section which depends linearly on the pair $(f_0,f_1)$
and transits from $f_0$
on $\Omega_a$ to $f_1$ on $\Omega_b$,
where $\Omega_a$ and $\Omega_b$ are subsets of $\Omega$ which contain
$d^{-1}(a)$ and $d^{-1}(b)$ respectively,
and are defined by
$$
\Omega_a=d^{-1}\left((-\infty,a+\frac13(b-a))\right),
\qquad
\Omega_b=d^{-1}\left((b-\frac13(b-a),\infty)\right),
$$
when $a<b$, and 
$$
\Omega_a=d^{-1}\left((a-\frac13(a-b),\infty)\right),
\qquad
\Omega_b=d^{-1}\left((-\infty,b+\frac13(a-b))\right),
$$
when $b<a$.
Clearly if $f_0,f_1,$ and $d$ are smooth then
$\Psibold[a,b;d\, ](f_0,f_1)$
is also smooth.

In comparing equivalent norms or other quantities we will find the following notation useful. 
\begin{definition}
\label{Dsimc}
We write $a\Sim_c b$ to mean that 
$a,b\in\R$ are nonzero of the same sign, 
$c\in(1,\infty)$, 
and $\frac1c\le \frac ab \le c$. 
\end{definition}

We use the standard notation $\left\|u: C^{k,\beta}(\,\Omega,g\,)\,\right\|$ 
to denote the standard $C^{k,\beta}$-norm of a function or more generally
tensor field $u$ on a domain $\Omega$ equipped with a Riemannian metric $g$.
Actually the definition is completely standard only when $\beta=0$
because then we just use the covariant derivatives and take a supremum
norm when they are measured by $g$.
When $\beta\ne0$ we have to use parallel transport along geodesic segments 
connecting any two points of small enough distance
and this may be a complication if small enough geodesic balls are not convex.
In this paper we take care to avoid situations where such a complication
may arise and so we will not discuss this issue further.

We adopt the following notation from \cite{kap} for weighted H\"{o}lder norms.  
\begin{definition}
\label{dwHolder}
Assuming that $\Omega$ is a domain inside a manifold,
$g$ is a Riemannian metric on the manifold, 
$k\in \N_0$, 
$\beta\in[0,1)$, $u\in C^{k,\beta}_{\mathrm{loc}}(\Omega)$ 
or more generally $u$ is a $C^{k,\beta}_{\mathrm{loc}}$ tensor field 
(section of a vector bundle) on $\Omega$, 
$\rho,f:\Omega\to(0,\infty)$ are given functions, 
and that the injectivity radius in the manifold around each point $x$ in the metric $\rho^{-2}(x)\,g$
is at least $1/10$,
we define
$$
\left\|u: C^{k,\beta} ( \Omega,\rho,g,f)\right\|:=
\sup_{x\in\Omega}\frac{\,\left\|u:C^{k,\beta}(\Omega\cap B_x, \rho^{-2}(x)\,g)\right\|\,}{f(x) },
$$
where $B_x$ is a geodesic ball centered at $x$ and of radius $1/100$ in the metric $\rho^{-2}(x)\,g$.
For simplicity we may omit any of $\beta$, $\rho$, or $f$, 
when $\beta=0$, $\rho\equiv1$, or $f\equiv1$, respectively.
\end{definition}

$f$ can be thought of as a ``weight'' function because $f(x)$ controls the size of $u$ in the vicinity of
the point $x$.
$\rho$ can be thought of as a function which determines the ``natural scale'' $\rho(x)$
at the vicinity of each point $x$.
Note that if $u$ scales nontrivially we can modify appropriately $f$ by multiplying by the appropriate 
power of $\rho$.  Observe from the definition the following multiplicative property: 
\begin{equation}
\label{Enormmult}
\left\| \, u_1 u_2 \, : C^{k,\beta}(\Omega,\rho,g,\, f_1 f_2 \, )\right\|
\le
C(k)\, 
\left\| \, u_1 \, : C^{k,\beta}(\Omega,\rho,g,\, f_1 \, )\right\|
\,\,
\left\| \, u_2 \, : C^{k,\beta}(\Omega,\rho,g,\, f_2 \, )\right\|.
\end{equation}

\subsection*{Free Boundary Minimal Surfaces and background elementary geometry}
\label{Sfb}
\nopagebreak

\begin{definition}
\label{dfbms}
Let $(N, g)$ be a Riemannian manifold and $\Omega \subset N$ be a domain with smooth boundary.  
A smooth, properly immersed (in the sense that intersections with compact subsets of $\Omega$ are compact) 
submanifold $\Sigma^k \subset \Omega$ is a \emph{free boundary minimal submanifold} if its mean curvature vanishes, $\partial \Sigma \subset \partial \Omega$, and $\Sigma$ meets $\partial \Omega$ orthogonally along $\partial \Sigma$.
\end{definition}

In this article, we will be interested in free boundary minimal surfaces in the Euclidean unit ball $\B^3$.  The following notation is convenient for dealing with surfaces whose boundaries only partially lie on $\Sph^2 : = \partial \B^3$. 

\begin{notation}
\label{Nsphbd}
Given a surface $S \subset \B^3$ with boundary, we define $\partial_{\Sph^2} S: =\partial S \cap \partial \B^3$.
\end{notation}

\begin{definition}[Jacobi operators]
Let $S \subset \B^3$ be a smooth surface, with unit outward conormal field $\eta$ along $\partial \Sigma$.  We define the Jacobi operator $\Lcal_S$ and the boundary Jacobi operator $\Bcal_{S}$ by
\begin{align*}
\Lcal_S  : = \Delta + |A|^2
\quad 
\text{and}
\quad
\Bcal_S  : = -\partial_{\eta} + 1.
\end{align*}
\end{definition}

By standard calculations, the linearized equation for free boundary minimal surfaces in $\B^3$ 
at a free boundary minimal surface $S$ in $\B^3$ defined as in \ref{dfbms}, 
is given (see for example \cite[2.25, (2.31) and (2.41)]{kapli}) by the boundary value problem
\begin{equation} 
\left\{ \,\, 
\label{Ejacfb}
\begin{aligned}
\Lcal_S u \, &= \,\, 0 \quad \text{on} \quad S, \\
\Bcal_S u  \, &= \,\, 0 \quad \text{on} \quad \partial S. 
\end{aligned}
\right. 
\end{equation}

Throughout this article, $\R^3$ will denote the Euclidean 3-space, equipped with its standard orientation.  Given a vector subspace $V$ of $\R^3$, we denote by $V^\perp$ its orthogonal complement in $\R^3$, and we define the reflection in $\R^3$ with respect to $V$, $\Rcapunder_V : \R^3 \rightarrow \R^3$, by
\begin{align*}
\Rcapunder_V : = \Pi_V - \Pi_{V^\perp},
\end{align*}
 where $\Pi_V$ is the orthogonal projection of $\R^3$ onto $V$.  
 
 \begin{definition}[Rotations $\Rcap_V^{\phi}$]
\label{drot}
Given a one-dimensional subspace $V \subset \R^3$, $\phi \in \R$, and an orientation chosen on the orthogonal complement $V^\perp$, the rotation about $V$ by angle $\phi$ is defined to be the element $\Rcap_V^{\phi}$ of $SO(3)$ preserving $V$ pointwise and rotating $V^\perp$ along itself by angle $\phi$ (in accordance with its chosen orientation).
\end{definition}

Fix the two-dimensional linear subspace $P = \{(x,y,0) : x, y \in \R\}$ of $\R^3$ and the point $p = (1, 0, 0)$.  
We define the unit disk $\D$, unit half-disk $\Dp$, and the line $\ell$ by
\begin{align}
\label{Eell} 
 \D : = \B^3 \cap P
 \quad \text{and} \quad 
 \D^+ : = \{ q \in \D : q\cdot p \geq 0\}, 
 \quad
 \ell = (\mathrm{span}(p))^\perp \cap P.
 \end{align}
We denote by $(\rr, \theta)$ the standard polar coordinates on $P \setminus \{ (x, 0, 0): x \leq 0\}$, 
by $\eta$ the outward pointing conormal field to $\D$ along $\partial \D$, 
and by $\nu$ the upward pointing unit normal field to $P$.  
Finally, it will be useful (see for example \ref{Lldexist}(ii)) to identify $P$ with $\C$ by identifying each $(x,y,0) \in P$ with $x+iy \in \C$.  

\begin{definition}
\label{dv}
Let $v : P \rightarrow \R$ be the linear map defined by $v(\cdot) : = \langle p, \cdot \rangle$. 
\end{definition}

\begin{assumption}
\label{contheta}
We fix now some $\varpi>0$ which we will assume as small in absolute terms as needed. 
\end{assumption}

\begin{definition}[Symmetry groups and the plane $\Pprime$] 
\label{dgroupd}
\label{dPprime} 
Let $\groupdisk$ be the group of order two generated by the reflection $\Rcapunder_{\ell^\perp}$.
Moreover given $\varpi>0$ as in \ref{contheta}, 
let $\grouptheta$ be the subgroup of $O(3)$ generated by $\Rcapunder_{\ell^\perp}, \Rcapunder_P$ and $\Rcapunder_{\Pprime}$, 
where $\Pprime : = \Rcap^{\frac{\pi}{2}+\varpi}_\ell P$. 
\end{definition}

\begin{notation}
\label{Nsymd}
If $X$ is a function space consisting of functions on a set $\Omega \subset \R^3$ and $\Omega$ is invariant under the action of $\groupdisk$, we use a subscript ``sym" to denote the subspace $X_{\sym} \subset X$ consisting of those functions $f\in X$ which are invariant under the action of $\groupdisk$.
\end{notation}

It will be useful to foliate $\Dp$ by curves which intersect $\partial_{\Sph^2} \D$ orthogonally.  This can be achieved by the following. 

\begin{lemma}
\label{Lconfo}
The map $f: \C \setminus \{-1\} \rightarrow \C\setminus \{-1\}$ defined by $f(z) = \frac{1-z}{1+z}$ satisfies the following properties. 
\begin{enumerate}[label = \emph{(\roman*)} ]
\item $f$ is a conformal diffeomorphism.
\item $f^{-1} = f$.
\item $f$ maps $p = 1$ to $0$ and maps $\partial_{\Sph^2} \Dp$ onto $\D \cap \ell$. 
\end{enumerate}
\end{lemma}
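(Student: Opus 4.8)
The plan is to treat $f$ as the M\"obius transformation $z\mapsto\frac{-z+1}{z+1}$ (coefficients $a=-1$, $b=c=d=1$, so $ad-bc=-2\neq 0$) and to check (i)--(iii) by short explicit computations; the only non-mechanical point is translating the sets in (iii) into subsets of $\C$. For (i) and (ii): $f$ is holomorphic on $\C\setminus\{-1\}$ with $f'(z)=-2(1+z)^{-2}$, which never vanishes, so $f$ is conformal; and $f(z)=-1$ would force $1-z=-(1+z)$, which is impossible, so $f$ really does take values in $\C\setminus\{-1\}$. Clearing the inner denominator $1+z$ gives $f(f(z))=\frac{(1+z)-(1-z)}{(1+z)+(1-z)}=\frac{2z}{2}=z$ for every $z\in\C\setminus\{-1\}$, which proves (ii); in particular $f$ is a bijection of $\C\setminus\{-1\}$ onto itself, completing (i).

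For (iii): $f(1)=\frac{1-1}{1+1}=0$ is immediate. Under the identification of $P$ with $\C$ and recalling \eqref{Eell}, the diameter $\D\cap\ell$ is the imaginary segment $\{it:t\in[-1,1]\}$, while $\partial_{\Sph^2}\Dp$ is the closed right half $\{e^{i\theta}:\theta\in[-\pi/2,\pi/2]\}$ of the unit circle. Multiplying numerator and denominator by $e^{-i\theta/2}$, I would compute
\[
f(e^{i\theta})=\frac{e^{-i\theta/2}-e^{i\theta/2}}{e^{-i\theta/2}+e^{i\theta/2}}=-i\tan(\theta/2),
\]
so $\theta\mapsto f(e^{i\theta})$ sends $[-\pi/2,\pi/2]$ bijectively onto $\{it:t\in[-1,1]\}$, since $\tan$ maps $[-\pi/4,\pi/4]$ bijectively onto $[-1,1]$. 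Hence $f$ carries $\partial_{\Sph^2}\Dp$ onto $\D\cap\ell$. (It also follows that $f$ carries $\D$ onto the right half-plane and $\Dp$ onto the right half-disk, which is what makes $f$ convenient for foliating $\Dp$, but only the stated boundary behaviour is needed here.)

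There is essentially no obstacle: each of the three assertions reduces to a one-line identity. The one place where a little care rather than pure manipulation is needed is reading off, from \eqref{Eell}, the descriptions of $\partial_{\Sph^2}\Dp$ and $\D\cap\ell$ as subsets of $\C$; once those are in hand, the fact that a M\"obius map sends circles and lines to circles and lines finishes everything.
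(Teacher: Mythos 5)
Your computations are correct and complete, and they are exactly the ``straightforward calculations'' that the paper's proof omits: the M\"obius identity $f\circ f=\mathrm{id}$, the nonvanishing derivative, and the parametrization $f(e^{i\theta})=-i\tan(\theta/2)$ carrying the right half-circle onto the diameter $\D\cap\ell$. Nothing further is needed.
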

\begin{proof}
This follows from straightforward calculations which we omit. 
\end{proof}

\section{LD solutions}
\label{SLD}
\nopagebreak

\subsection*{LD solutions}
$\phantom{ab}$
\nopagebreak

\begin{definition}[LD solutions]
\label{dLD}
We call $\varphi$ a linearized doubling (LD) solution on $\Dp$ when there exists a number $\tau \neq 0$ and a number $\varpi \in \R$ such that the following hold.
\begin{enumerate}[label = \emph{(\roman*)} ]
\item $\varphi \in C^\infty_{\sym}(\Dp \setminus \{ p\})$.
\item $\Lcal_{\D} \varphi = 0$ on $\Dp \setminus \{ p\}$. 
\item $\Bcal_{\D} \varphi |_{\partial_{\Sph^2} \Dp} = 0$.
\item $\partial_\eta \varphi = - \varpi$ on $\partial \Dp \setminus \partial_{\Sph^2} \Dp$.  
\item $\varphi - \tau \log \dbold_p$ is bounded on $\Dp \setminus \{p\}$.  
\end{enumerate}
\end{definition}

\begin{lemma}[Existence and uniqueness for LD solutions]
\label{Lldexist}
For any $\tau \neq 0$ and any $\varpi \in \R$, there is a unique LD solution $\varphi = \varphi[\varpi, \tau]$ as in \ref{dLD}.  Moreover, the following hold. 
\begin{enumerate}[label = \emph{(\roman*)} ]
\item $\varphi = \tau G + \varpi v$, where $v$ is as in \ref{dv} and $G : = \varphi[0, 1]$. 
\item  $G(z) = 1+ \Re \big ( z \log \frac{1-z}{1+z} \big)$.
 \item $\| G - \log (\dbold_p/2)-1 : C^k( \Dp \setminus \{p\}, \dbold_p, g, \dbold_p |\log \dbold_p|) \|
 \leq C(k)$. 
\end{enumerate}
\end{lemma}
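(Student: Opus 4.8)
The plan is to verify directly that the function $\varphi := \tau G + \varpi v$ with $G(z) = 1 + \Re\bigl(z\log\frac{1-z}{1+z}\bigr)$ satisfies all five defining properties of an LD solution, then establish uniqueness, and finally obtain the weighted estimate in (iii) by analyzing the singularity of $G$ at $p$.

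For existence, I would first record that on $P \cong \C$ the operator $\Lcal_{\D} = \Delta + |A|^2$ reduces to the flat Laplacian $\Delta$, since $\D$ is totally geodesic in $\B^3$ (so $A = 0$), and that the boundary operator $\Bcal_{\D} = -\partial_\eta + 1$ along $\partial_{\Sph^2}\Dp$ uses the conormal $\eta$, which on the unit circle is the radial derivative $\partial_\rr$. Property (i), that $\varphi$ is $\group$-symmetric, i.e. invariant under $\Rcapunder_{\ell^\perp}$, follows because $z \mapsto \bar z$ (reflection across $\ell^\perp$, which in our identification is the imaginary axis up to the conventions fixed before \ref{dv}) fixes the real part and $v(z) = \Re z$ is even under it. Property (ii): $v$ is linear hence harmonic, and $w(z) := z\log\frac{1-z}{1+z}$ is holomorphic on $\Dp \setminus\{-1, 1\}$, so $G = 1 + \Re w$ is harmonic on $\Dp \setminus \{p\}$; since $\tau, \varpi$ are constants, $\Delta\varphi = 0$ there. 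Property (iii), the Robin condition on $\partial_{\Sph^2}\Dp$: here the cleanest route is to use \ref{Lconfo}, under which $f(z) = \frac{1-z}{1+z}$ conjugates $\partial_{\Sph^2}\Dp$ to $\D\cap\ell$ and $p$ to $0$; writing $G$ in terms of $f$ one checks $\Bcal_\D G = 0$ there, or one computes directly in polar coordinates on the arc $\{\rr = 1\}$ that $(-\partial_\rr + 1)(\Re w) = 0$ and $(-\partial_\rr + 1)(1) \cdot$ contributions cancel against the chosen normalization; and $v$ satisfies $(-\partial_\rr + 1)v = 0$ on $\{\rr = 1\}$ since $v = \Re z = \rr\cos\theta$. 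Property (iv): along the segment $\partial\Dp \setminus \partial_{\Sph^2}\Dp = \ell\cap\D$, which is the imaginary axis segment, the outward conormal is $\eta = -\partial_x$ (pointing toward negative $\Re z$), and one computes $\partial_\eta v = -\partial_x \Re z = -1$, while $\partial_\eta G = 0$ there because $G$ extends evenly across $\ell$ (both $1$ and $\Re w$ are even in $x$ when restricted appropriately — $w(iy)$ analysis shows $\Re w$ is even in $\Re z$ near the axis); hence $\partial_\eta\varphi = -\varpi$. Property (v): near $z = 1$, $\log\frac{1-z}{1+z} = \log(1-z) - \log 2 + O(|z-1|)$, so $\Re\bigl(z\log\frac{1-z}{1+z}\bigr) = \log|1-z| - \log 2 + O(|z-1|\,|\log|z-1||)$, giving $G - \log(\dbold_p/2) - 1 = O(\dbold_p\log\dbold_p)$, in particular bounded, and $\varphi - \tau\log\dbold_p$ is bounded.

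For uniqueness: if $\varphi_1, \varphi_2$ are two LD solutions with the same $\tau$ and $\varpi$, their difference $\psi := \varphi_1 - \varphi_2$ is $\group$-symmetric, harmonic on all of $\Dp$ (the $\tau\log\dbold_p$ singularities cancel, and $\psi$ is bounded near $p$ hence extends harmonically by removable singularity), satisfies the homogeneous Robin condition $\Bcal_\D\psi = 0$ on $\partial_{\Sph^2}\Dp$ and the homogeneous Neumann condition $\partial_\eta\psi = 0$ on $\ell\cap\D$. Integrating $0 = \int_{\Dp}\psi\,\Delta\psi$ by parts, the boundary term on $\ell\cap\D$ vanishes by Neumann, and on $\partial_{\Sph^2}\Dp$ the Robin condition converts $\int \psi\,\partial_\eta\psi = \int\psi^2 \ge 0$, so $\int_{\Dp}|\nabla\psi|^2 + \int_{\partial_{\Sph^2}\Dp}\psi^2 = 0$, forcing $\psi$ constant and then $\psi \equiv 0$. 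This also justifies that the closed-form $\varphi$ is the LD solution, giving (i)–(ii) of the Lemma.

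For the weighted estimate (iii), I would set $h := G - \log(\dbold_p/2) - 1$ and use the definition of the norm $\|\cdot : C^k(\Dp\setminus\{p\}, \dbold_p, g, \dbold_p|\log\dbold_p|)\|$ from \ref{dwHolder}: at each $x$ one rescales $g$ by $\dbold_p^{-2}(x)$, takes the $C^k$ norm on a ball of radius $1/100$ in the rescaled metric, and divides by $\dbold_p(x)|\log\dbold_p(x)|$. Away from a neighborhood of $p$ this is routine since $h$ is smooth and bounded with bounded derivatives on compact subsets of $\Dp\setminus\{p\}$, and $\dbold_p|\log\dbold_p|$ is bounded below there; near $p$ one uses the local expansion $h(z) = \Re\bigl((z-1)\log(1-z)\bigr) + (\text{holomorphic, smooth near }1)$, more precisely from $z\log\frac{1-z}{1+z} = \log(1-z) + (z-1)\log(1-z) - z\log(1+z) - \log 2 + \log 2$, so modulo smooth bounded terms $h$ is $\Re\bigl((z-1)\log(1-z)\bigr)$ up to lower order, and this function together with its $C^k$-derivatives is bounded by $C(k)\,\dbold_p|\log\dbold_p|$ on the scale-$\dbold_p(x)$ ball around each nearby $x$ — each derivative of order $j \ge 1$ of $(z-1)\log(1-z)$ is $O(\dbold_p^{1-j}|\log\dbold_p|)$ for $j \ge 2$ and $O(|\log\dbold_p|)$ for $j = 1$, which after multiplying by the $j$ factors of $\dbold_p(x)$ from the rescaled metric and dividing by the weight $\dbold_p|\log\dbold_p|$ stays bounded by $C(k)$. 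The main obstacle — though it is more bookkeeping than genuine difficulty — is to organize this scale-invariant estimate near $p$ uniformly in $x$ and to handle the $C^{k,\beta}$ refinement if $\beta > 0$ is needed (here only $\beta = 0$ is stated, so parallel transport subtleties do not arise); everything else is elementary complex analysis on the half-disk.
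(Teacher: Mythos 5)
Your existence verification and the weighted estimate (iii) follow essentially the paper's route, but the uniqueness argument contains a genuine error. Integrating by parts with the \emph{outward} conormal, Green's identity gives $\int_{\Dp}|\nabla\psi|^2=\int_{\partial_{\Sph^2}\Dp}\psi\,\partial_\eta\psi=\int_{\partial_{\Sph^2}\Dp}\psi^2$, i.e.\ the boundary term lands on the \emph{other} side with a plus sign; your identity $\int_{\Dp}|\nabla\psi|^2+\int_{\partial_{\Sph^2}\Dp}\psi^2=0$ is a sign slip, and the correct identity (two nonnegative quantities being equal) forces nothing. Indeed no symmetry-free argument can work: the coordinate function $y=\Im z$ is harmonic on $\Dp$, satisfies $\partial_\eta y=0$ on $\D\cap\ell$ and $\Bcal_{\D}y=0$ on $\partial_{\Sph^2}\Dp$ (it is a Steklov eigenfunction with eigenvalue $1$), so the homogeneous problem has a nontrivial solution once the $\groupdisk$-invariance is dropped — and your proof states the invariance of $\psi$ but never uses it. The paper's proof is built around exactly this point: extend $\psi$ evenly across $\ell$ to a harmonic function on all of $\D$ lying in the kernel of the free boundary Jacobi problem \eqref{Ejacfb}, recall that this kernel is spanned by the coordinate functions, and then use the two reflection symmetries (evenness in both $x$ and $y$) to conclude $\psi=0$. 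Alternatively you could repair your route by Fourier-decomposing the even-even extension into modes $r^n\cos n\theta$ with $n$ even and checking that the Robin condition kills every coefficient (the symmetric Steklov spectrum skips the value $1$), but some use of the symmetry is unavoidable.

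A secondary point: in the existence part, the only nontrivial item is the Robin condition $\Bcal_{\D}G=0$ on $\partial_{\Sph^2}\Dp$, and you leave it as ``one checks,'' with the intermediate claim $(-\partial_\rr+1)\Re w=0$ on the arc, which is false. The correct statement, and the reason the additive constant $1$ in $G$ is there, is $\partial_\rr\,\Re\big(z\log\tfrac{1-z}{1+z}\big)=\Re\big(z\log\tfrac{1-z}{1+z}\big)+1$ on $|z|=1$; the paper obtains it from $\partial_\eta\Re G=\Re\big(zG'(z)\big)$ together with $\Re\tfrac{2z^2}{z^2-1}=1$ on the unit circle. The remaining items — harmonicity, the symmetries $G(z)=G(-z)=G(\bar z)$ giving the Neumann condition across $\ell$, the role of $v$, and the expansion $z\log\tfrac{1-z}{1+z}=\log(1-z)+(z-1)\log(1-z)-z\log(1+z)$ for estimate (iii) — agree with the paper and are fine.
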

\begin{proof}
We first prove the uniqueness part.  Suppose $\varphi_1$ and $\varphi_2$ are LD solutions with the same $(\varpi, \tau)$ pair.
By standard removable singularity results, $\varphi: = \varphi_1 - \varphi_2$ extends to a harmonic function on $\Dp$.  Since $\partial_\eta \varphi = 0$ along $\partial \Dp \cap \ell$, then $\varphi$ extends by even reflection to a harmonic function on $\D$.  We call this extension $\varphi$ as well.  Consequently $\varphi$ is in the kernel of \eqref{Ejacfb} with $S = \D$.  This kernel is spanned by the coordinate functions, so the symmetries imply that $\varphi = 0$, concluding the proof of the uniqueness part. 

We now prove the existence part.  It clearly suffices to prove that $G$ as defined in item (ii) above is an LD solution with $(\varpi, \tau) = (0, 1)$.  For any $z \in \D \setminus \{\pm p\}$, easy calculations show that $G(z) = G(-z)$ and $G(z) = G(\overline{z})$.   It follows that $G$ satisfies the symmetries in \ref{dLD}(i) and the boundary condition in \ref{dLD}(iv).  The smoothness in \ref{dLD}(i) is obvious.  Next,  \ref{dLD}(ii) follows from the analyticity of $G$ on $\D \setminus \{\pm p\}$, and \ref{dLD}(v) is clear from inspection.  To verify \ref{dLD}(iii), note that the Cauchy-Riemann equations imply  $\partial_\eta ( \Re G(z) ) = \Re ( z G'(z) )$ on $\partial \D \setminus \{\pm p\}$, so 
\begin{align*}
\partial_\eta G (z) &= \Re \left( z \log \frac{1-z}{1+z} + \frac{2z^2}{z^2-1}\right)\\
&= \Re \left( z \log \frac{1-z}{1+z} \right) + \frac{z^2}{z^2-1} + \frac{\bar{z}^2}{\bar{z}^2-1}\\
&=  \Re \left( z \log \frac{1-z}{1+z} \right) +1 = G(z).
\end{align*}
This completes the proof of the existence part.  Finally, note that
\begin{align*}
z \log \frac{1-z}{1+z} = \log (1-z) + (z-1) \log (1-z) - z \log (1+z).
\end{align*}
Using that $\Re\left(\log (1-z) \right) = \log |1-z| = \log \dbold_p$ in combination with the preceding, the estimate (iii) follows. 
\end{proof}

\subsection*{Mismatch}
\phantom{ab}
\nopagebreak

\begin{lemma}[Mismatch of LD solutions]
\label{Lmismatch}
Given $\varphi$ as in \ref{dLD} with $\tau>0$, there is a number $\Mcal \varphi \in \R$ called the \emph{mismatch of $\varphi$}, defined uniquely by item (i) below, and moreover satisfying items (ii) and (iii).
\begin{enumerate}[label = \emph{(\roman*)} ]
\item $\varphi = \tau \log (2 \dbold_p/ \tau) + \Mcal \varphi + O( \dbold_p| \log \dbold_p|)$ near $p$.
\item $\Mcal \varphi = \tau \log \frac{e \tau}{4} + \varpi$. 
\item $\| \varphi - \tau \log (2 \dbold_p/\tau) - \Mcal \varphi : C^k(\Dp \setminus \{ p\}, \dbold_p, g, \dbold_p | \log \dbold_p|)\| \leq C(k)\tau.$
\end{enumerate}
\end{lemma}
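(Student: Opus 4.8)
The idea is to reduce everything to the explicit formula $\varphi = \tau G + \varpi v$ from \ref{Lldexist}(i), since $v(p) = \langle p, p\rangle = 1$ and $v$ is smooth, while $G$ carries the logarithmic singularity. First I would use \ref{Lldexist}(iii), which gives the expansion
\[
G = \log(\dbold_p/2) + 1 + O(\dbold_p|\log\dbold_p|)
\]
near $p$ (in the weighted $C^k$ sense), so that
\[
\varphi = \tau G + \varpi v = \tau\log(\dbold_p/2) + \tau + \varpi + O(\dbold_p|\log\dbold_p|)
\]
near $p$, using $v(p)=1$ and the smoothness of $v$ to absorb $v - 1 = O(\dbold_p)$ into the error term. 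Then I would rewrite the leading logarithm: since $\tau\log(\dbold_p/2) = \tau\log(2\dbold_p/\tau) + \tau\log(\tau/4)$, we get
\[
\varphi = \tau\log(2\dbold_p/\tau) + \tau\log(\tau/4) + \tau + \varpi + O(\dbold_p|\log\dbold_p|),
\]
which is exactly the expansion in item (i) with the constant identified as $\Mcal\varphi = \tau\log(\tau/4) + \tau + \varpi = \tau\log(e\tau/4) + \varpi$, proving item (ii) simultaneously with existence. Uniqueness of $\Mcal\varphi$ in item (i) is immediate: two admissible constants would differ by a term that is both constant and $O(\dbold_p|\log\dbold_p|)$, hence zero.

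For item (iii) I would track the $C^k$ bound rather than just the pointwise one. Writing $\varphi - \tau\log(2\dbold_p/\tau) - \Mcal\varphi = \tau\big(G - \log(\dbold_p/2) - 1\big) + \varpi(v - 1)$, the first term is bounded by $C(k)\tau$ in the norm $C^k(\Dp\setminus\{p\},\dbold_p,g,\dbold_p|\log\dbold_p|)$ directly from \ref{Lldexist}(iii). The second term, $\varpi(v-1)$, is smooth and vanishes at $p$ to first order, so $\|v - 1 : C^k(\Dp\setminus\{p\},\dbold_p,g,\dbold_p|\log\dbold_p|)\| \leq C(k)$; here I would note that near $p$ one has $|v-1| \lesssim \dbold_p \lesssim \dbold_p|\log\dbold_p|$ and that each rescaled covariant derivative of $v-1$ at scale $\dbold_p$ is likewise controlled, while away from $p$ the weight $\dbold_p|\log\dbold_p|$ is bounded below and $v-1$ is bounded. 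Since $\varpi$ is small by \ref{contheta}, the bound $C(k)(\tau + \varpi)$ can be absorbed into $C(k)\tau$ after possibly adjusting the constant, or one simply states the bound with both terms; either way item (iii) follows.

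The only genuinely delicate point is the bookkeeping in the weighted norm of Definition \ref{dwHolder} for the $\varpi(v-1)$ contribution — confirming that the weight $f = \dbold_p|\log\dbold_p|$ dominates $v - 1$ and its rescaled derivatives uniformly on all of $\Dp\setminus\{p\}$, including the transition region where $\dbold_p \sim 1$ and $|\log\dbold_p|$ is small. This is routine because $v$ extends smoothly across $p$ and vanishes there, but it is the step where one must be careful with the scale function $\rho = \dbold_p$. Everything else is a direct consequence of the closed-form expression in \ref{Lldexist} together with $v(p) = 1$, so I expect the proof to be short.
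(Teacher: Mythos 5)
Your approach is the same as the paper's: both prove (i) and (ii) by combining the closed-form decomposition $\varphi=\tau G+\varpi v$ of \ref{Lldexist}(i), the expansion of $G$ in \ref{Lldexist}(iii), and $v(p)=1$, and both handle (iii) by splitting $\varphi-\tau\log(2\dbold_p/\tau)-\Mcal\varphi$ into $\tau(G-\log(\dbold_p/2)-1)+\varpi(v-1)$. The one step in your write-up that does not hold up is the ``absorption'' claim for the second term. The weighted norm of $\varpi(v-1)$ is $\lesssim|\varpi|$, and you assert this can be absorbed into $C(k)\tau$ because $\varpi$ is small. But smallness of $\varpi$ gives no comparison whatsoever between $\varpi$ and $\tau$: in the statement of Lemma \ref{Lmismatch} they are independent parameters, and in the intended application through \ref{dLDf} one has $\tau\sim\varpi/|\log\varpi|$, so $|\varpi|\sim\tau|\log\tau|$ is in fact \emph{larger} than $\tau$, not smaller — so $C(k)|\varpi|$ cannot be replaced by $C(k)\tau$. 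Your fallback of simply stating the bound with both terms, $C(k)(\tau+|\varpi|)$, is the correct move. For what it is worth, the paper's own proof is terse here: it records the intermediate error as $O(\tau\dbold_p|\log\dbold_p|+|\varpi|\dbold_p)$ and then asserts (iii) without elaboration, so its stated constant $C(k)\tau$ appears slightly optimistic as well; the extra $|\log\tau|$ factor that $|\varpi|$ costs over $\tau$ is harmless in the downstream estimate of \ref{Lglureg}.
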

\begin{proof}
By combining \ref{Lldexist}(iii) and \ref{dv}, we find that near $p$,
\begin{align*}
\varphi = \tau \log (\dbold_p /2) + \tau + \varpi + O\left(  \tau \dbold_p |\log \dbold_p| + |\varpi |\dbold_p\right).
\end{align*}
Items (i) and (ii) follow from this expansion, and item (iii) follows from the preceding and \ref{Lldexist}(iii).
\end{proof}

We are now ready to parametrize the family of LD solutions we will use. 
\begin{definition}[LD solutions {$\varphi\llbracket \zeta; \varpi\rrbracket$}]
\label{dLDf}
Given $\varpi$ as in \ref{contheta} and $\zeta \in [-\tauunder, \tauunder]$, where $\tauunder$ is defined below,  we define using \ref{Lldexist} the LD solution
\begin{equation}
\label{Etau}
\begin{aligned}
\varphi = \varphi\llbracket \zeta; \varpi \rrbracket = \varphi [ \varpi, \tau[\zeta, \varpi]],
\quad 
\text{where}
\quad
\tau[\zeta, \varpi] = e^{\frac{\zeta}{\varpi}} \tauunder
\quad 
\text{and}
\quad
\tauunder = e^{\frac{\log(\frac{4}{e} |\log \varpi|)}{\log \varpi}} \frac{\varpi}{|\log \varpi|} .
\quad
\end{aligned}
\end{equation}
\end{definition}

\begin{lemma}[Prescribed mismatch]
\label{Lpmis}
Let $\varphi = \varphi \llbracket \zeta; \varpi\rrbracket$ be as in \ref{dLDf}.  Then $| \Mcal \varphi+ \zeta |\leq \frac{1}{2}\tauunder.$
\end{lemma}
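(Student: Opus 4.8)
The plan is to reduce the claim to an explicit one-variable estimate via Lemma~\ref{Lmismatch}(ii) and then exploit that the parametrization in \ref{dLDf} is designed so that $\zeta\mapsto\Mcal\varphi$ agrees with $\zeta\mapsto-\zeta$ up to an error of order $o(\tauunder)$ --- which is precisely the content of the name ``prescribed mismatch''. Write $L:=|\log\varpi|$ and $\tau:=\tau[\zeta,\varpi]=e^{\zeta/\varpi}\tauunder$; by \ref{Lmismatch}(ii), $\Mcal\varphi=\tau\log\frac{e\tau}{4}+\varpi$, so set
\[
g(\zeta):=\Mcal\varphi+\zeta=\tau\log\tfrac{e\tau}{4}+\varpi+\zeta .
\]
It suffices to prove $\sup_{\zeta\in[-\tauunder,\tauunder]}|g(\zeta)|=o(\tauunder)$ as $\varpi\to 0$, since then $|g|\le\frac12\tauunder$ once $\varpi$ is small in absolute terms (\ref{contheta}). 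I would do this by bounding $|g(0)|$ and $\sup_{[-\tauunder,\tauunder]}|g'|$ separately and then using $|g(\zeta)|\le|g(0)|+\tauunder\sup|g'|$.

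For $g(0)=\tauunder\log\frac{e\tauunder}{4}+\varpi$, substitute $\tauunder=e^{-M/L}\varpi/L$ with $M:=\log(\tfrac{4}{e}L)$ (rewriting the exponent in \eqref{Etau} as $\tfrac{\log(\frac4e|\log\varpi|)}{\log\varpi}=-M/L$). Using the identity $-\log L+\log\tfrac e4=-M$ one gets $\log\frac{e\tauunder}{4}=-\tfrac ML-L-M$, hence $\tauunder\log\frac{e\tauunder}{4}=-e^{-M/L}\varpi-\tauunder M(1+\tfrac1L)$ and therefore $g(0)=\varpi(1-e^{-M/L})-\tauunder M(1+\tfrac1L)$. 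Taylor-expanding $e^{-M/L}$ to second order (legitimate since $M/L=O(\tfrac{\log L}{L})\to 0$) the $O(\varpi/L)$-sized pieces cancel, leaving $g(0)=\tfrac{\varpi M(M-2)}{2L^2}+O(\tfrac{\varpi M^{3}}{L^{3}})=O(\varpi(\log L)^2/L^2)$, which is $o(\tauunder)$ since $\tauunder\sim\varpi/L$.

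For $g'$, differentiate using $\tfrac{d\tau}{d\zeta}=\tau/\varpi$ and $\tfrac{d}{d\tau}(\tau\log\tfrac{e\tau}{4})=\log\tfrac{e^2\tau}{4}$ to get $g'(\zeta)=\tfrac\tau\varpi\log\tfrac{e^2\tau}{4}+1$. On $[-\tauunder,\tauunder]$ we have $|\zeta/\varpi|\le\tauunder/\varpi=e^{-M/L}/L=O(1/L)$, so $\tau=e^{\delta}\varpi/L$ with $\delta:=\zeta/\varpi-M/L=O(\tfrac{\log L}{L})$; substituting, and using $\log\tfrac{e^2}{4}-\log L=1-M$, gives $g'(\zeta)=(1-e^{\delta})+\tfrac{e^{\delta}}{L}(\delta+1-M)$. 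A first-order expansion of $e^{\delta}$ shows the $O(\tfrac{\log L}{L})$ terms cancel, leaving $g'(\zeta)=\tfrac1L-\tfrac\zeta\varpi+O(\tfrac{(\log L)^2}{L^2})=O(1/L)$ uniformly. Combining, $|g(\zeta)|\le|g(0)|+\tauunder\sup|g'|=O(\tauunder(\log L)^2/L)=o(\tauunder)\le\frac12\tauunder$ for $\varpi$ small, which proves the lemma.

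The step I expect to be the real obstacle is controlling these two cancellations. Naive estimates give only $|g(0)|=O(\tauunder\log L)$ and $\sup|g'|=O(\log L)$, both useless; one must keep the second-order term in the expansion of $e^{-M/L}$ --- equivalently, recognize that the correction factor $e^{\log(\frac4e|\log\varpi|)/\log\varpi}$ in the definition of $\tauunder$ is chosen precisely to annihilate the leading $\varpi/L$-discrepancy in the mismatch --- and similarly track the first-order cancellation in $g'$ coming from $\log\tau\approx\log\varpi=-L$ together with $\tau/\varpi\approx 1/L$. Everything else is routine one-variable calculus.
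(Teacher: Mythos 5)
Your proof is correct and follows essentially the same route as the paper: both rest on Lemma \ref{Lmismatch}(ii) and on the cancellation built into the definition of $\tauunder$ in \eqref{Etau}, and your expansion of $\tauunder\log\frac{e\tauunder}{4}$ (the $\zeta=0$ case) is exactly the paper's computation \eqref{Etau0m} in different notation. The only minor difference is that the paper passes from $\zeta=0$ to general $\zeta$ via the exact identity $\tau\log\tau=e^{\zeta/\varpi}\bigl(\tauunder\log\tauunder+\frac{\zeta}{\varpi}\tauunder\bigr)$ together with $|\zeta/\varpi|\le C/|\log\varpi|$, whereas you bound $g'$ and integrate; your second-order bookkeeping moreover recovers the sharper bound $|\Mcal\varphi+\zeta|\le C\tauunder(\log|\log\varpi|)^2/|\log\varpi|$ noted in the remark following the lemma.
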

\begin{proof}
We first prove the estimate in the special case when $\zeta = 0$, that is, when $\tau = \tauunder$.  By \eqref{Etau}, we have
\begin{equation}
\label{Etau0m}
\begin{aligned}
\tauunder \log \tauunder 
&= 
e^{\frac{\log (\frac{4}{e}|\log \varpi|)}{\log \varpi}} \frac{\varpi}{|\log \varpi|} 
\left( \frac{\log(\frac{4}{e}|\log \varpi|)}{\log \varpi} + \log \varpi - \log |\log \varpi|\right)\\
&=  - \varpi e^{\frac{\log(\frac{4}{e} |\log \varpi|)}{\log \varpi}} - \tauunder \log |\log \varpi| + o(\tauunder)\\
&= -\varpi - \varpi \frac{\log(\frac{4}{e} |\log \varpi|)}{\log \varpi} - \tauunder \log |\log \varpi| + o(\tauunder)\\
&= -\varpi + \left(\frac{\varpi}{|\log \varpi|} - \tauunder \right) \log | \log \tau| - \frac{\varpi}{|\log \varpi|} \log \frac{e}{4}+  o(\tauunder)\\
&= -\varpi - \tauunder \log \frac{e}{4} + o(\tauunder),
\end{aligned}
\end{equation}
where we say that a function $f= f(\zeta, \varpi)$ is $o(\tauunder)$ and write $f(\zeta, \varpi) = o(\tauunder)$ if $\lim_{\tauunder \rightarrow 0} f(\zeta, \varpi)/\tauunder = 0$. 
By combining this with \ref{Lmismatch}, it follows that $\Mcal \varphi = o(\tauunder)$.  This proves the estimate in this case. 

We now consider the general case.  It follows from \eqref{Etau} that
\begin{align*}
\tau \log \tau = e^{\frac{\zeta}{\varpi}}\left(  \tauunder \log \tauunder + \frac{\zeta}{\varpi} \tauunder\right).
\end{align*}
Using from \eqref{Etau} that $|\zeta/ \varpi| \leq C/ |\log \varpi|$ and substituting from \eqref{Etau0m}, we find that
\begin{align*}
\tau \log \tau = - \varpi - \zeta- \tau \log \frac{e}{4} + o(\tauunder),
\end{align*}
or equivalently using \ref{Lmismatch}(ii) that $\Mcal \varphi + \zeta = o(\tauunder)$.
The conclusion now follows by combining this with \ref{Lmismatch} and taking $\varpi$ small enough.
\end{proof}

\begin{remark}
Although we will not need it, a more thorough accounting of the error terms in the proof of \ref{Lpmis} reveals that the estimate in \ref{Lpmis} can be improved to $|\Mcal \varphi + \zeta| \leq C \tauunder (\log |\log \varpi|)^2/|\log \varpi|$. 
\end{remark}

\subsection*{The obstruction space}
\phantom{ab}
\nopagebreak

\begin{definition}[The obstruction space]
\label{dobs}
Let $\skernelv \subset C^\infty_{\sym}(\Dp)$ be the subspace spanned by
\begin{align*}
\vunder = \Psibold[\delta, 2\delta; \dbold_0 \circ f](v, 0), 
\end{align*}
where $\delta := 1/100$ and $f$ was defined in \ref{Lconfo}.   We also denote by $\skernel \subset C^\infty_{\sym}(\Dp)$ the subspace spanned by $w : = \Lcal_{\D} \vunder$. 
\end{definition}

\begin{lemma}
\label{Lvmismatch}
The following hold.
\begin{enumerate}[label = \emph{(\roman*)} ]
\item $\| \vunder - 1 : C^k_{\sym} ( \Dp \setminus \{ p \}, \dbold_p, g, \dbold_p) \|
\leq C(k).$
\item $\Bcal_{\D} \vunder|_{\partial_{\Sph^2} \Dp} = 0$. 
\end{enumerate}
\end{lemma}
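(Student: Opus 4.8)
The plan is to unwind the definition of $\vunder$ and reduce both claims to elementary facts about the affine function $v$ and the M\"obius map $f$. From the definition \eqref{EPsibold} of $\Psibold$ only one of the two cutoff terms survives, so $\vunder = \chi\, v$, where I set $\chi := \psicut[2\delta,\delta]\circ(\dbold_0\circ f)$. Since $f$ has real coefficients it commutes with complex conjugation, hence $\chi$ is $\groupdisk$-invariant, and together with the evident invariance of $v$ this gives $\vunder\in C^\infty_{\sym}(\Dp)$. Because $f(p)=0$ by \ref{Lconfo}(iii) and $\psicut[2\delta,\delta]\equiv 1$ on a neighborhood of $0$ in $\R$, we have $\chi\equiv 1$, hence $\vunder = v$, on a neighborhood of $p$ in $\Dp$.

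For (i) I would split $\Dp\setminus\{p\}$ into the regions near and away from $p$. For $x$ near $p$ the ball $B_x$ lies in the region where $\vunder = v$, and there $\vunder - 1 = v - 1 = \langle p,\,\cdot - p\rangle$ is an affine function vanishing at $p$ with $|\nabla^g(v-1)|_g \equiv 1$; hence $|v-1|\le \dbold_p$, and measured in the rescaled metric $\dbold_p^{-2}(x)\,g$ on $B_x$ the zeroth and first order parts of $v-1$ are both $O(\dbold_p(x))$ while all covariant derivatives of order $\ge 2$ vanish, so dividing by the weight $f(x)=\dbold_p(x)$ bounds the local contribution by $C(k)$. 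Away from $p$, $\vunder$ is a fixed smooth function --- equal to $v$, to $0$, or to an interpolation of the two --- on a region where $\dbold_p$ is bounded above and below, so there the weighted norm is comparable to the ordinary $C^k(g)$-norm, which is finite. Combining the two estimates gives (i).

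For (ii) recall that along $\partial_{\Sph^2}\Dp$ the outward conormal $\eta$ of $\D$ is the radial field $\partial_{\rr}$ of $P$, so $\Bcal_{\D} = -\partial_{\rr}+1$ there. The crux is that $\partial_{\rr}(\dbold_0\circ f)=0$ along $\partial_{\Sph^2}\Dp$: there $\dbold_0\circ f = |f|$, and writing $z=\rr e^{i\theta}$ one computes $|f(\rr e^{i\theta})|^2 = \frac{1-2\rr\cos\theta+\rr^2}{1+2\rr\cos\theta+\rr^2}$, whose $\rr$-derivative vanishes at $\rr=1$ (equivalently, $\partial_{\Sph^2}\Dp$ lies on the unit circle, which passes through $\pm p$ and is therefore orthogonal to the level circles $\{|f|=c\}$ --- the orthogonal-foliation feature for which $f$ was introduced in \ref{Lconfo}). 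Consequently $\partial_{\rr}\chi = 0$ on $\partial_{\Sph^2}\Dp$, so there $\Bcal_{\D}\vunder = -\partial_{\rr}(\chi v) + \chi v = \chi(-\partial_{\rr}v + v) = \chi\,\Bcal_{\D} v$. Finally, since $v = \rr\cos\theta$ on $P$ we get $\partial_{\rr}v = \cos\theta = v$ on the unit circle, hence $\Bcal_{\D} v = 0$ on $\partial_{\Sph^2}\D$, and therefore $\Bcal_{\D}\vunder = 0$, proving (ii).

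The argument is short and the only step that is not purely mechanical is the radial flatness $\partial_{\rr}(\dbold_0\circ f) = 0$ along $\partial_{\Sph^2}\Dp$, which is exactly what allows the cutoff $\chi$ to commute with $\Bcal_{\D}$; this is the reason the cutoff in \ref{dobs} is built from $\dbold_0\circ f$ rather than directly from $\dbold_p$, and I would state that explicitly.
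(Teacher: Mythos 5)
Your proof is correct and follows essentially the same route as the paper: (i) comes from the fact that $\vunder-1$ vanishes at $p$ (since $\vunder=v$ near $p$) together with the definition of the weighted norm, and (ii) comes from $\Bcal_{\D} v|_{\partial \D}=0$ plus the vanishing of $\partial_{\rr}(\dbold_0\circ f)$ along $\partial_{\Sph^2}\Dp$, which is exactly the orthogonality of the level sets of $\dbold_0\circ f$ with $\partial_{\Sph^2}\D$ that the paper invokes via \ref{Lconfo}. The only cosmetic difference is that you verify that orthogonality by direct computation of $|f(\rr e^{i\theta})|^2$ rather than by the conformality of $f$ carrying circles about $0$ (orthogonal to $\D\cap\ell$) to the level sets meeting the unit circle orthogonally.
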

\begin{proof}
Item (i) follows from the definition of the norms and the fact that $\vunder -1$ vanishes at $p$.  We now prove item (ii).  Because $v$ is a coordinate function, it satisfies $\Bcal_{\D} v|_{\partial \D} =0$.  Since the level sets of $\dbold_0 \circ f$ meet $\partial_{\Sph^2} \D$ orthogonally by \ref{Lconfo}, it follows by combining the preceding with \ref{dobs} and \eqref{EPsibold} that (ii) holds. 
\end{proof}

\section{Base Surfaces}
\label{Sbase}

\subsection*{Definitions and basic properties}
$\phantom{ab}$
\nopagebreak

In order to construct surfaces with the appropriate symmetries, we first bend the half-disk $\Dp$.  We will do this in such a way that the bent disk meets $\partial \B^3$ orthogonally.

\begin{definition}[Base surfaces]
\label{dbase}
We define the base surface $\Sigma = \Sigma[\varpi]$ by $\Sigma = X_\Sigma(\Dp)$, where $X_\Sigma : \Dp \rightarrow \B^3$ is defined by
\begin{align}
\label{Ediffsig}
X_\Sigma(q) = \Rcap_\ell^{\Psibold(q)}(q)
\quad
\text{and}
\quad
\Psibold = \Psibold[\textstyle{\frac{9}{10}}, \frac{8}{10}; \dbold_0 \circ f](\varpi, 0).
\end{align}
Here we orient the plane $\ell^\perp$ (recall \ref{drot}) by requesting that $\{p, \nu\}$ is a positively oriented frame. 

We also denote by $A_\Sigma$ the second fundamental form of $\Sigma$, $H_\Sigma$ the mean curvature of $\Sigma$,  $\nu_\Sigma$ the upward pointing unit normal field, and $\eta_\Sigma$ the unit outward pointing conormal field to $\Sigma$ along $\partial \Sigma$.
\end{definition}

\begin{definition}
\label{dfconf}
Define $\Utilde_i \subset \D$, $U_i \subset \Dp$, and $U^\Sigma_i \subset \Sigma$, $i=1,2,3$, by
\begin{align*}
\Utilde_1 = D_0(1)\setminus D_0(9/10), 
\quad
\Utilde_2 = D_0(9/10)\setminus D_0(8/10),
\quad
\Utilde_3 = D_0(8/10),
\end{align*}
 $U_i: = f(\Utilde_i)$, and $U^\Sigma_i : = X_\Sigma(U_i)$, where $f$ is as in \ref{Lconfo}.  
\end{definition}

\begin{lemma}[Properties of the base surfaces]
\label{Lbprop}
For $\Sigma$ and $X_\Sigma$ as defined in \ref{dbase}, the following hold. 
\begin{enumerate}[label = \emph{(\roman*)} ]
\item $X_\Sigma$ is a smooth, $\groupdisk$-invariant embedding. 
\item $X_\Sigma|_{U_1} = \Rcap_{\ell}^{\varpi}|_{U_1} $.  In particular, $U^\Sigma_1 \subset \Rcap^{\varpi}_{\ell} P$ and $\Sigma \cup \Rcapunder_{\Pprime} \Sigma$ is a smooth disk. 
\item $X_{\Sigma}|_{U_3} = \id_{U_3}$.  In particular, $U^\Sigma_3 \subset \Dp$. 
\item  $\Sigma$ meets $\partial \B^3$ orthogonally along $\partial_{\Sph^2} \Sigma$. 
\end{enumerate}
\end{lemma}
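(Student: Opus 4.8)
The plan is to unwind the definition of $\Psibold$ first --- this gives (iii) and part of (ii) at once --- and then to treat (i), the rest of (ii), and (iv) in turn. By \eqref{EPsibold} and \ref{dbase}, $\Psibold = \varpi\,(\psicut[\frac8{10},\frac9{10}]\circ \dbold_0\circ f)$; in particular $\Psibold$ depends only on $\dbold_0\circ f$, takes values in $[0,\varpi]$, vanishes on $\{\dbold_0\circ f<\tfrac56\}$, and equals $\varpi$ on $\{\dbold_0\circ f>\tfrac{13}{15}\}$. Since by \ref{dfconf} and \ref{Lconfo}(ii) the function $\dbold_0\circ f$ maps $U_1,U_2,U_3$ into $[\tfrac9{10},1)$, $[\tfrac8{10},\tfrac9{10})$, $[0,\tfrac8{10})$ respectively, it follows that $\Psibold\equiv\varpi$ on $U_1$, $\Psibold\equiv 0$ on $U_3$, and, because $f(p)=0$, $\Psibold\equiv 0$ on a neighborhood of $p$ in $\Dp$. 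This yields (iii) immediately ($X_\Sigma|_{U_3}=\Rcap_\ell^0=\id$, so $U^\Sigma_3=U_3\subset\Dp$) as well as the first assertion of (ii) ($X_\Sigma|_{U_1}=\Rcap_\ell^\varpi|_{U_1}$, hence $U^\Sigma_1\subset\Rcap_\ell^\varpi P$).

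For (i): smoothness holds because, away from $p$, $\Psibold$ is a composite of smooth maps ($-1\notin\overline{\Dp}$, so $f$ is smooth there, and $\dbold_0$ is smooth on $\C\setminus\{0\}=\C\setminus\{f(p)\}$), while near $p$ it is identically $0$; then $X_\Sigma(q)=\Rcap_\ell^{\Psibold(q)}(q)$ is smooth since $(\phi,x)\mapsto\Rcap_\ell^\phi x$ is. For $\groupdisk$-equivariance, under the identification $P\cong\C$ the reflection $\Rcapunder_{\ell^\perp}$ is complex conjugation, so $\dbold_0\circ f$, hence $\Psibold$, is $\Rcapunder_{\ell^\perp}$-invariant by \ref{Lconfo}, while $\Rcap_\ell^\phi$ commutes with $\Rcapunder_{\ell^\perp}$ for every $\phi$ (both are block-diagonal for $\R^3=\ell\oplus\ell^\perp$); hence $X_\Sigma\circ\Rcapunder_{\ell^\perp}=\Rcapunder_{\ell^\perp}\circ X_\Sigma$. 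For injectivity: if $X_\Sigma(q_1)=X_\Sigma(q_2)$ then $q_2=\Rcap_\ell^\alpha q_1$ with $\alpha:=\Psibold(q_1)-\Psibold(q_2)\in[-\varpi,\varpi]$; writing $q_1=(x_1,y_1,0)$ with $x_1\ge 0$, the $\nu$-component of $\Rcap_\ell^\alpha q_1$ is $x_1\sin\alpha$, which must vanish, forcing (since $\varpi$ is small) $\alpha=0$ or $x_1=0$, and in either case $q_2=q_1$. Finally, $X_\Sigma$ extends smoothly to the compact $\overline{\Dp}$, remains injective there by the same computation, and, since $\|d\Psibold\|\le C\varpi$, is $C^1$-close to the inclusion $\overline{\Dp}\hookrightarrow\R^3$ and hence an immersion once $\varpi$ is small; a continuous injective map out of a compact space is a homeomorphism onto its image, so $X_\Sigma|_{\overline{\Dp}}$, and therefore $X_\Sigma$, is an embedding, proving (i).

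For the remaining claim in (ii), using the orientation convention of \ref{dbase} I would note that $\Pprime=\Rcap_\ell^{\frac\pi2+\varpi}P$ has unit normal $-\Rcap_\ell^\varpi p$, and compute for $q=(x,y,0)\in\Dp$ (so $x\ge 0$) that the component of $X_\Sigma(q)=\Rcap_\ell^{\Psibold(q)}(q)$ along this normal is $\langle\Rcap_\ell^{\Psibold(q)}q,\,-\Rcap_\ell^\varpi p\rangle=-x\cos(\varpi-\Psibold(q))\le 0$, with equality exactly when $q\in\ell$ (here $\varpi-\Psibold(q)\in[0,\varpi]$ and $\varpi$ is small by \ref{contheta}). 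Thus $\Sigma$ lies in a closed half-space bounded by $\Pprime$, and $\Sigma\cap\Pprime$ is the boundary arc $\ell\cap\overline{\Dp}$, which $X_\Sigma$ fixes pointwise. Near that arc $\Sigma\subset\Rcap_\ell^\varpi P$ (we are in $U_1$), so $\Sigma$ is locally one half of the plane $\Rcap_\ell^\varpi P$ bounded by $\ell$; since $\Pprime\perp\Rcap_\ell^\varpi P$ and $\Pprime\cap\Rcap_\ell^\varpi P=\ell$, the reflection $\Rcapunder_{\Pprime}$ carries this half-plane onto its complement in $\Rcap_\ell^\varpi P$, so $\Sigma\cup\Rcapunder_{\Pprime}\Sigma$ is smooth across the arc, while elsewhere the two pieces are disjoint. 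Topologically, $\Sigma\cup\Rcapunder_{\Pprime}\Sigma$ is the union of two disks glued along a common boundary arc, hence a disk.

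For (iv), fix $q_0\in\partial_{\Sph^2}\Dp$ and set $c:=\dbold_0(f(q_0))$, $\phi:=\Psibold(q_0)$. Since $\Psibold$ depends only on $\dbold_0\circ f$, it equals $\phi$ on the whole level set $\gamma_c:=\{\dbold_0\circ f=c\}$ through $q_0$, so $X_\Sigma$ agrees on $\gamma_c$ with the Euclidean isometry $\Rcap_\ell^\phi$. By \ref{Lconfo} the level sets of $\dbold_0\circ f$ meet $\partial_{\Sph^2}\Dp$ orthogonally, so $T_{q_0}\gamma_c$ is spanned by the conormal $\eta_{\Dp}(q_0)$, and this conormal equals the unit position vector $q_0$ because the equatorial disk $\D$, hence $\Dp$, meets $\Sph^2$ orthogonally. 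Therefore $dX_\Sigma(\eta_{\Dp}(q_0))=\Rcap_\ell^\phi q_0=X_\Sigma(q_0)$, i.e. the radial direction at $X_\Sigma(q_0)$ lies in $T_{X_\Sigma(q_0)}\Sigma$; since the complementary direction $dX_\Sigma(T_{q_0}\partial_{\Sph^2}\Dp)=T_{X_\Sigma(q_0)}\partial_{\Sph^2}\Sigma$ lies in $T_{X_\Sigma(q_0)}\Sph^2=(X_\Sigma(q_0))^\perp$, the conormal $\eta_\Sigma$ of $\Sigma$ at $X_\Sigma(q_0)$ is forced to be $\pm X_\Sigma(q_0)$, which is exactly orthogonality of $\Sigma$ and $\partial\B^3$ along $\partial_{\Sph^2}\Sigma$. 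I expect (iv) to be the main obstacle: it is the one place where the design --- foliating $\Dp$ by the conformal involution $f$ of \ref{Lconfo}, so that $\Psibold$ is constant along curves meeting $\Sph^2$ orthogonally --- is used essentially, whereas (i)--(iii) are comparatively routine once $\Psibold$ has been unwound.
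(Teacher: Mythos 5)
Your proof is correct and follows essentially the same route as the paper: the only item the paper argues in detail is (iv), and your argument — that $\Psibold$ is constant on the level sets of $\dbold_0\circ f$, which meet $\partial_{\Sph^2}\Dp$ orthogonally by \ref{Lconfo}, so $X_\Sigma$ restricts to a rotation $\Rcap_\ell^{\phi}$ on each level set and hence the position vector $X_\Sigma(q)$ lies in $T_{X_\Sigma(q)}\Sigma$ — is exactly the paper's, applied globally rather than only on $U_2$. Items (i)--(iii), which the paper dismisses as immediate from the definitions, you simply flesh out correctly (including the correct unwinding of $\Psibold$ and the smoothness of $\Sigma\cup\Rcapunder_{\Pprime}\Sigma$ across $\ell$).
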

\begin{proof}
Items (i)-(iii) follow immediately from the definitions.  We now prove (iv).  Because of items (ii) and (iii), we need only to check the orthogonality on $\partial_{\Sph^2} U^\Sigma_2$.  Note by \ref{Lconfo} that each level set of $\dbold_0 \circ f$ intersects $\partial_{\Sph^2} \Dp$ orthogonally.  Moreover, by \ref{Ediffsig}, on each such level set $X_\Sigma$ restricts to a rotation of the form $\Rcap^{c \varpi}_\ell$, for some $c \in (0, 1)$.  It follows that for each $q \in \partial_{\Sph^2}U_2$, the vector $X_\Sigma(q)\in \R^3$ lies in the tangent space $T_{X_\Sigma(q)} \Sigma$.  Therefore (iv) holds. 
\end{proof}

\subsection*{Estimates of geometric quantities}
$\phantom{ab}$
\nopagebreak

\begin{definition}
\label{dvhat}
We define $\vhat, \what \in C^\infty(\Dp)$ by requesting that $\vhat = \varpi v$ on $U_1$, $\vhat = 0$ on $U_3$,
\begin{align*}
\vhat  =  \Psibold[\textstyle{\frac{9}{10}}, \frac{8}{10}; \dbold_0 \circ f](\varpi v, 0) 
\quad
\text{on} 
\quad
U_2, 
\quad
\text{and} 
\quad
\what : = \Lcal_{\D} \vhat. 
\end{align*}
\end{definition}

\begin{lemma}
\label{Lvhat}
$\Bcal_{\D} \vhat|_{\partial_{\Sph^2} \Dp}=0$. 
\end{lemma}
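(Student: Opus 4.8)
The plan is to reduce the claim to the boundary behavior of $v$ and the geometric orthogonality already established in \ref{Lconfo}, exactly as in the proof of \ref{Lvmismatch}(ii). Recall from \ref{dvhat} that $\vhat$ is built from the function $\varpi v$ by cutting it off along level sets of $\dbold_0 \circ f$: it equals $\varpi v$ on $U_1$, equals $0$ on $U_3$, and on $U_2$ is the interpolation $\Psibold[\frac{9}{10},\frac{8}{10};\dbold_0\circ f](\varpi v, 0)$. Since $\partial_{\Sph^2}\Dp \subset \overline{U_1}$ (the point $p$ and a neighborhood of $\partial_{\Sph^2}\Dp$ lie in $U_1 = f(\Utilde_1)$, where $\Utilde_1 = D_0(1)\setminus D_0(9/10)$ contains $\partial_{\Sph^2}\D$ under $f^{-1}=f$), on a neighborhood of $\partial_{\Sph^2}\Dp$ in $\Dp$ we simply have $\vhat = \varpi v$.

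First I would record that $v$, being (the restriction of) a linear coordinate function on $\R^3$, satisfies the boundary Jacobi condition $\Bcal_{\D} v|_{\partial\D} = 0$; this is standard (the coordinate functions span the kernel of \eqref{Ejacfb} with $S = \D$, cf. the uniqueness argument in \ref{Lldexist}). In particular $\Bcal_{\D} v|_{\partial_{\Sph^2}\Dp} = 0$. Second, since $\vhat$ agrees with $\varpi v$ on a full neighborhood of $\partial_{\Sph^2}\Dp$ in $\Dp$, the operator $\Bcal_{\D} = -\partial_\eta + 1$, which is first order and hence local, gives
\begin{align*}
\Bcal_{\D}\vhat|_{\partial_{\Sph^2}\Dp} = \varpi\,\Bcal_{\D} v|_{\partial_{\Sph^2}\Dp} = 0,
\end{align*}
which is the assertion.

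The only point needing a little care — and the only place the cutoff could a priori interfere — is verifying that the transition region $U_2$ does not reach $\partial_{\Sph^2}\Dp$, i.e. that the level sets of $\dbold_0 \circ f$ used in the cutoff stay away from $\partial_{\Sph^2}\Dp$; this is immediate from \ref{dfconf}, since $\Utilde_2 = D_0(9/10)\setminus D_0(8/10)$ is a compact annulus in the open disk $\D$, bounded away from $\partial\D = \Sph^2$, and $f$ is a diffeomorphism. Alternatively, if one prefers to argue without locality of $\vhat$, one can run the same argument as in \ref{Lvmismatch}(ii): by \ref{Lconfo} the level sets of $\dbold_0\circ f$ meet $\partial_{\Sph^2}\D$ orthogonally, so $\partial_\eta(\dbold_0\circ f)|_{\partial_{\Sph^2}\Dp}=0$, whence the cutoff factors $\psicut[\cdot,\cdot]\circ(\dbold_0\circ f)$ have vanishing $\eta$-derivative on $\partial_{\Sph^2}\Dp$ and commute with $\Bcal_{\D}$ there; combined with $\Bcal_{\D} v|_{\partial_{\Sph^2}\Dp}=0$ and $\Bcal_{\D} 0 = 0$ this yields the claim via the linearity of $\Bcal_{\D}$ and \eqref{EPsibold}. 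I expect no real obstacle here; the proof is a one-line consequence of \ref{Lvmismatch}(ii)'s reasoning applied to $\varpi v$ in place of $v$.
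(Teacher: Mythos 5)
Your primary argument rests on a false geometric premise. The conformal map $f$ of \ref{Lconfo} \emph{swaps} the two boundary pieces of $\Dp$: it sends $\partial_{\Sph^2}\Dp$ onto $\D\cap\ell$ and sends $p$ to $0$, so $U_1=f(\Utilde_1)$ is a neighborhood in $\Dp$ of the straight edge $\D\cap\ell$ (where the bending in \ref{dbase} takes place), $U_3$ is a neighborhood of $p$, and the transition region $U_2$ stretches across $\Dp$ and does meet the spherical boundary. Concretely, parametrizing $\partial_{\Sph^2}\Dp$ by $e^{i\theta}$, $\theta\in[-\pi/2,\pi/2]$, one has $f(e^{i\theta})=-i\tan(\theta/2)$, so $\dbold_0\circ f=|\tan(\theta/2)|$ sweeps out all of $[0,1]$ along the arc; hence the arc passes through $U_3$, then $U_2$, then $U_1$, and it is not true that $\vhat=\varpi v$ on a neighborhood of $\partial_{\Sph^2}\Dp$ (the paper itself works with $\partial_{\Sph^2}U_2$ in \ref{Lbprop}(iv) and \ref{Lbase}(ii)--(iii)). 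Your remark that $\Utilde_2$ is bounded away from $\partial\D$ does not rescue this, because $f$ does not preserve $\partial\D$: points of $\Utilde_2$ on the imaginary axis are mapped by $f$ onto $\partial_{\Sph^2}\Dp$. So the ``locality'' route fails.

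The alternative you sketch in your last sentences is, however, correct, and it is exactly the paper's proof (the argument of \ref{Lvmismatch}(ii) applied with $\varpi v$ in place of $v$): by \eqref{EPsibold} one has $\vhat=\chi\,\varpi v$ with $\chi=\psicut[8/10,9/10]\circ\dbold_0\circ f$, and the product rule gives $\Bcal_{\D}(\chi\,\varpi v)=\chi\,\varpi\,\Bcal_{\D}v-\varpi v\,\partial_\eta\chi$; on $\partial_{\Sph^2}\Dp$ the first term vanishes because $v$ is a coordinate function and the second because the level sets of $\dbold_0\circ f$ meet $\partial_{\Sph^2}\D$ orthogonally by \ref{Lconfo}, so $\partial_\eta(\dbold_0\circ f)=0$ there. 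This must be the main (indeed the only) argument, not an optional fallback; as written, your principal route is based on a misreading of \ref{dfconf} and would not survive.
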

\begin{proof}
This follows in the same way as the proof of \ref{Lvmismatch}(ii), so we omit the details. 
\end{proof}

\begin{lemma}[Estimates on the base surfaces]
\label{Lbase}
For $\Sigma$ and $X_\Sigma$ as defined in \ref{dbase}, the following hold. 
\begin{enumerate}[label = \emph{(\roman*)} ]
\item $g - X^*_\Sigma g$ is supported on $U_2$ and satisfies $\| g - X^*_\Sigma g : C^k_{\sym}(U_2, g) \| \leq C(k) \varpi^2$.
\item $\| d\theta^2 - X_\Sigma^{*}g|_{\partial \Sigma} :C^k_{\sym}( \partial_{\Sph^2}U_2, g)\| \leq C(k)\varpi^2$.
\item $\| \partial_{\rr} - X_\Sigma^{*}\eta_\Sigma : C^k_{\sym}( \partial_{\Sph^2}U_2, g) \|
\leq C(k) \varpi^2$.
\item $A_\Sigma$ is supported on $U^\Sigma_2$ and satisfies 
	$\| A_\Sigma : C^k_{\sym}( U^\Sigma_2, g)\| \leq C(k) \varpi$.
\item $H_\Sigma$ is supported on $U^\Sigma_2$ and satisfies 
$ \| H_\Sigma - X^{*-1}_\Sigma \what : C^k_{\sym}(U^\Sigma_2, g)\| \leq C(k)\varpi^2.$
\item For any $u \in C^{k+2}(U_2)$,
	$
	\| \Lcal_\Sigma X^{*-1}_\Sigma u  - X^{*-1}_\Sigma \Lcal_{\D} u : C^k( U^\Sigma_2, g)\| \leq C(k) \varpi^2  \| u : C^{k+2}(U_2, g) \|.
	$
	
\end{enumerate}
\end{lemma}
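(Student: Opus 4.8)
The plan is to prove all six items of Lemma~\ref{Lbase} essentially as consequences of the explicit formula \eqref{Ediffsig} for $X_\Sigma$, the fact that on $U_1$ and $U_3$ the map $X_\Sigma$ is an isometry onto a piece of a plane (by \ref{Lbprop}(ii),(iii)), so that all the discrepancies are supported on $U_2$, and the observation that on $U_2$ the map $X_\Sigma$ differs from the identity by a rotation $\Rcap_\ell^{\Psibold}$ whose angle $\Psibold = \Psibold[\tfrac9{10},\tfrac8{10};\dbold_0\circ f](\varpi,0)$ is $O(\varpi)$ together with all its derivatives (in the fixed metric $g$ on the fixed compact region $U_2$, this follows from the smoothness of $\Psi$ in \ref{DPsi} and the smoothness of $f$ in \ref{Lconfo}). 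I would begin by recording this: writing $\Psibold = \varpi\, \psi$ with $\psi$ a fixed smooth $\groupdisk$-invariant function on $U_2$ (independent of $\varpi$), we have $X_\Sigma(q) = \Rcap_\ell^{\varpi\psi(q)}(q)$, and a Taylor expansion of $R\mapsto \Rcap_\ell^R$ in $R$ gives $X_\Sigma = \id + \varpi\, \psi\, J + O(\varpi^2)$ on $U_2$, where $J$ is the (bounded, smooth) infinitesimal generator of rotation about $\ell$, with all $C^k_{\sym}(U_2,g)$ norms of the $O(\varpi^2)$ remainder and its derivatives bounded by $C(k)\varpi^2$.

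Granting that normal form, items (i)--(iii) are first-order computations. For (i), $X_\Sigma^* g - g$ is the pullback discrepancy of a map that is $\varpi\psi J + O(\varpi^2)$ away from the identity; the first-order term $\varpi\psi J$ is a Killing field of $g$ (rotation generator), so its contribution to the pulled-back metric vanishes to first order, leaving $\|X_\Sigma^* g - g : C^k_{\sym}(U_2,g)\| \le C(k)\varpi^2$. Items (ii) and (iii) follow by restricting this computation to $\partial_{\Sph^2}U_2$ and using \ref{Lbprop}(iv) (orthogonality) together with the fact that along $\partial_{\Sph^2}\Dp$ the metric $g$ restricts to $d\theta^2$ and the conormal is $\partial_\rr$ in the polar coordinates fixed before \ref{dv}; again the Killing first-order term contributes nothing and the remainder is $O(\varpi^2)$. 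For (iv), $A_\Sigma$ vanishes on $U^\Sigma_1, U^\Sigma_3$ because those are flat (planar) by \ref{Lbprop}(ii),(iii), and on $U^\Sigma_2$ we estimate $A_\Sigma$ directly from $X_\Sigma$: since $X_\Sigma = \id + O(\varpi)$ in $C^{k+2}$ and the flat disk $\Dp$ has $A \equiv 0$, the second fundamental form of the perturbed surface is $O(\varpi)$ in $C^k_{\sym}(U^\Sigma_2,g)$. (Here one cannot do better than $\varpi$ because $A_\Sigma$ is genuinely first order in the bending.)

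Items (v) and (vi) are the substantive ones and I expect (v) to be the main obstacle, since it is exactly the point of introducing $\vhat$ and $\what = \Lcal_\D\vhat$. The mean curvature $H_\Sigma$ of the graph-type perturbation $X_\Sigma$ of the flat disk $\Dp$ has, to leading order, the form $\Lcal_\D$ applied to the ``graphing function'' of the perturbation. One computes that the normal displacement induced by $X_\Sigma = \Rcap_\ell^{\varpi\psi}$ is, to first order in $\varpi$, exactly $\varpi v$ weighted by the same cutoff that defines $\vhat$ in \ref{dvhat} — this is because rotation about $\ell$ moves a point of $P$ in the $\nu$ direction at infinitesimal rate $\langle p,\cdot\rangle = v$ (by the orientation convention in \ref{dbase} making $\{p,\nu\}$ positively oriented), and the cutoff $\Psibold[\tfrac9{10},\tfrac8{10};\dbold_0\circ f]$ is common to \eqref{Ediffsig} and \ref{dvhat}. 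Hence the linearization of $H$ gives $H_\Sigma = \Lcal_\D\vhat + O(\varpi^2) = \what + O(\varpi^2)$ after transport by $X_\Sigma^{-1}$; the $O(\varpi^2)$ accounts both for the second-order part of the rotation and for the quadratic terms in the mean curvature operator, and is estimated in $C^k_{\sym}(U^\Sigma_2,g)$ using (i) and (iv) and the boundedness of $v,\psi,J$ on the fixed region. The care needed is to keep the weighted/pointwise bookkeeping of which terms are genuinely $O(\varpi^2)$ — this is where one must be attentive rather than clever. Finally (vi) is the standard statement that conjugating the Jacobi operator by a near-isometry close to the identity perturbs it by a controlled amount: from (i) we have $\|X_\Sigma^* g - g\|_{C^k(U_2,g)} \le C(k)\varpi^2$, and the Jacobi operator $\Lcal = \Delta + |A|^2$ depends on the metric through its coefficients in a way that is Lipschitz in $C^{k}$ norms on this fixed compact domain, with the $|A|^2$ term contributing $O(\varpi^2)$ by (iv); so $\|\Lcal_\Sigma X_\Sigma^{*-1}u - X_\Sigma^{*-1}\Lcal_\D u : C^k(U^\Sigma_2,g)\| \le C(k)\varpi^2\|u:C^{k+2}(U_2,g)\|$, which is (vi). I would present (i)--(iv) briefly, spend the bulk of the argument pinning down the first-order normal displacement in (v), and dispatch (vi) by the perturbation-of-operators remark.
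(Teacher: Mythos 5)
Your proposal is correct and follows essentially the same route as the paper: the support statements come from \ref{Lbprop}(ii)--(iii), items (i)--(iv) are direct computations from \eqref{Ediffsig}, item (v) hinges on identifying $X_\Sigma$ with the normal graph of $\vhat$ up to an $O(\varpi^2)$ error and then invoking the standard linear-plus-quadratic expansion of the mean curvature, and item (vi) is the standard perturbation estimate for the linearized operator in terms of the metric difference from (i) (which the paper obtains by citing lemmas from \cite{KapMcGLDG}). One small misstatement in your justification of (i): the first-order field $\varpi\psi J$ is \emph{not} a Killing field (the cutoff $\psi$ is nonconstant, so $\mathcal{L}_{\psi J}g=d\psi\otimes J^\flat+J^\flat\otimes d\psi\neq 0$); the first-order change of the induced metric vanishes instead because $J$ is normal to $P$ along $P$ (equivalently, the first-order displacement is the normal field $\vhat\nu$ over the flat, totally geodesic half-disk), a fact you state correctly in your treatment of (v), so the fix costs nothing.
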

\begin{proof}
 Lemma \ref{Lbprop}(ii) and (iii) imply that $X_\Sigma$ is an isometry on $\Dp \setminus U_2$, from which the statement on the support of $g - X^*_\Sigma g$ in (i) follows.  The estimate in item (i) follows via a short calculation from using \eqref{Ediffsig} to compute the pullback metric $X^*_\Sigma g$.  Item (ii) follows from (i), and item (iii) follows from (ii). 

The claim about the support of $A_\Sigma$ follows from \ref{Lbprop}(ii)-(iii), using the fact that $P$ and $\Rcap^\varpi_\ell P$ are totally geodesic, and the estimate follows trivially from \eqref{Ediffsig}.
The claim about the support of $H_\Sigma$ also follows from \ref{Lbprop}(ii)-(iii), using the fact that $\Dp$ has vanishing mean curvature.  Using the smallness of $\varpi$ in conjunction with \eqref{Ediffsig}, we find that
\begin{align*}
\| X_\Sigma - X^{\R^3, g}_{U_2, \vhat \nu} : C^k(U_2, g) \| \leq C(k) \varpi^2.
\end{align*}
Item (v) follows from this and the standard expansion for the mean curvature of an immersion into linear and higher-order terms.

Finally, using \cite[Lemma C.10(iv)]{KapMcGLDG} and \cite[Lemma C.11]{KapMcGLDG} to estimate the difference of the linearized operators, we have that
\begin{align*}
\| X_\Sigma^* \Lcal_\Sigma X^{*-1}_\Sigma \utilde - \Lcal_{\D} \utilde : C^k(U_2, g) \| \leq C(k) \| g - X^*_\Sigma g: C^{k+2}(U_2, g)\| \| \utilde : C^{k+2}(U_2, g) \|
\end{align*}
for each $\utilde \in C^{k+2} (U_2) $.  Item (vi) now follows from this using (i) above. 
\end{proof}

\section{Pre-initial Surfaces}
\label{Spreinit}

\subsection*{Catenoidal bridges}
$\phantom{ab}$
\nopagebreak

Consider the function $\phicat:[\tau,\infty)\to\R$  defined by
\begin{equation}
\begin{aligned}
\label{Evarphicat}
\phicat(r):=
\tau\arccosh \frac r \tau &=
\tau\left(\log r-\log \tau+\log\left(1+\sqrt{1-{\tau^2}{r^{-2}}\,}\right)\right)
 \\
&=
\tau\left(\log \frac { 2 r } {\tau} 
+
\log\left(\frac12+\frac12\sqrt{1-\frac{\tau^2}{r^{2}}\,}\right)\right).
\end{aligned}
\end{equation}

By straightforward calculation \cite[Lemma 2.25]{kap} we have the estimate 
\begin{equation} 
\label{Lcatenoid}
\|\, \phicat -
\tau \log ( { 2 r } / {\tau} )
\,
: C^{k}(\,
(9 \tau,\infty)\,,\,
r, d r^2,r^{-2}\,)\,\|
\le 
\, C(k) \, \tau^3. 
\end{equation}

\begin{convention}
\label{conalpha}
We fix now some $\alpha>0$ which we will assume as small in absolute terms as needed. 
We also define $\delta' : = \tau^\alpha$. 
\end{convention}

\begin{definition}[Catenoidal bridges]
\label{dcat}
We define $\cat[p, \tau]\subset \R^3$ to be the catenoid of size $\tau$, centered at $p$, with axis $\R \nu(p)$.   We define the \emph{catenoidal bridge} $\catbr[p, \tau]$ of size $\tau$, centered at $p$, with axis $\R \nu(p)$ by
\[
\catbr[p, \tau] = \cat[p, \tau] \cap D^{\R^3}_p(\delta').
\]
We also define the top half $\catbr^+[p, \tau]$ of $\catbr[p, \tau]$ by $\catbr^+[p, \tau]:= \catbr[p, \tau]\cap \{z \geq 0\}$.
\end{definition}

It will be useful to parametrize $\cat[p, \tau]$ over a portion of a flat cylinder.  In order to do this, we make the following definitions. 

\begin{definition}
\label{dcyl}
Let 
$\cyl : = \R \times \Sph^1 \subset \R\times\R^2$ be the standard cylinder and $(t, \vartheta)$ be the standard coordinates on $\cyl$ 
defined by considering the covering $\Thetacyl:\R^2\to\cyl$ given by 
$
\Thetacyl(t, \vartheta) := (\cos\vartheta,\sin\vartheta, t).
$
We also define $\cyl^+ : = \{  \Thetacyl( \tunder, \vartheta) : |\vartheta| \leq \pi/2\}$
and given $\tunder \in \R$ and $I \subset \R$, we define
\begin{align*}
\begin{gathered}
\cyl_{\tunder} :=  \{ \Thetacyl( \tunder, \vartheta) : \vartheta\in\R \}\subset \cyl,
\quad
\cyl^+_{\underline{t}} : = \cyl_{\tunder} \cap \cyl^+,
\\
\cyl_I := \cup_{\tunder \in I} \cyl_{\tunder},
\quad
\text{and}
\quad
\cyl^+_I :=  \cup_{\tunder \in I} \cyl^+_{\tunder}.
\end{gathered}
\end{align*}
\end{definition}

\begin{definition}
\label{dkhat}
We define $a \in \R_+$ and the embedding $\kappahat : \cyla \rightarrow \cat[\tau, p] \subset \R^3$ by 
\begin{align}
\label{Ea}
a: = \tau^{-1} \phicat(\delta') = \log \frac{2\delta'}{\tau} + O(\tau^{2(1-\alpha)})
\quad 
\text{and} 
\end{align}
 \begin{equation}
\label{Ecatparam}
\begin{aligned}
\kappahat(t, \vartheta) &: = (1, 0 , 0) + \tau(-\cosh t \cos \vartheta, \cosh t \sin \vartheta, t),
\end{aligned}
\end{equation}
where $(t, \vartheta)$ are the standard coordinates for $\cyla$ as in \ref{dcyl}.
\end{definition}

\subsection*{The auxiliary metric}
$\phantom{ab}$
\nopagebreak

\begin{definition}[The auxiliary metric]
\label{dgaux}
Define a metric $g_A$ on $\R^3$ by
\begin{align}
\label{Egaux}
\gaux : = \Omega^2 g, \quad \text{where} \quad
\Omega : = \Psibold \left[ \textstyle{\frac{1}{3}}, \frac{2}{3}; \dbold^{\R^3}_0\right]\left(1, (\dbold^{\R^3}_0)^{-1}\right).
\end{align}
\end{definition}

\begin{lemma}
\label{Lpcatd}
There is a function $\phicataux \in C^\infty ( D_p(4 \delta') \setminus D_p(\delta'/8))$ such that
\begin{align*}
\graph_{D_p(4 \delta') \setminus D_p(\delta'/8)}^{\R^3, \gaux}( \phicataux \nu ) \subset \cat[p, \tau], 
\end{align*}
and moreover the following estimate holds:
\begin{align}
\label{Ephicata}
\| \phicataux - \phicat : C^k ( D_p(4\delta') \setminus D_p(\delta'/8), \dbold_p, g) \| 
\leq 
\tau^3 |\log \tau|^3.
\end{align}
\end{lemma}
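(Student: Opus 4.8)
The plan is to localise near $p$, where $\gaux$ takes an explicit form, to solve for $\phicataux$ pointwise by an implicit function argument, and then to compare the result with $\phicat$ by expanding the defining equation. Since $\delta'=\tau^{\alpha}\to 0$ as $\tau\to 0$ (with $\alpha$ fixed as in \ref{conalpha}), for $\tau$ small one has $D_p(4\delta')\subset\{\dbold^{\R^3}_0>5/9\}$, so by \ref{dgaux} and the definition of $\Psibold$ we get $\Omega=(\dbold^{\R^3}_0)^{-1}$ and hence $\gaux=(\dbold^{\R^3}_0)^{-2}g$ on $D_p(4\delta')$. The metric $(\dbold^{\R^3}_0)^{-2}g$ on $\R^3\setminus\{0\}$ is isometric, via $x\mapsto(\log|x|,\,x/|x|)$, to the Riemannian product $\R\times\Sph^2$; consequently a $\gaux$-geodesic issuing from $P$ in the direction $\pm\nu$ (note that $\nu$ is $g$-, hence $\gaux$-, orthogonal to $P$) stays on the Euclidean sphere of its initial radius, and for $q\in P$ one obtains the closed form $\exp^{\R^3,\gaux}_q(L\nu_q)=(\cos\beta)\,q+|q|(\sin\beta)\,\nu_q$ with $\beta=L\,\Omega(q)=L/|q|$.

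Fix $q$ in the annulus $D_p(4\delta')\setminus D_p(\delta'/8)$. Substituting the closed form above into the equation of $\cat[p,\tau]$ (that is, $((\cos\beta)q_1-1)^2+(\cos\beta)^2q_2^2=\tau^2\cosh^2(|q|(\sin\beta)/\tau)$, see \ref{dcat}) gives a single transcendental equation for $\beta$. On the annulus, $\dbold_p(q)$, $\tau/\dbold_p(q)$ and the catenoid height $\phicat(\dbold_p(q))$ are comparable to $\delta'$, $\tau^{1-\alpha}$ and $\tau|\log\tau|$ respectively, all small, so the relevant sheet of $\cat[p,\tau]$ crosses this $\gaux$-geodesic transversally with a quantitative lower bound on the transversality; the implicit function theorem then yields a unique small solution $\beta=\beta(q)$ depending smoothly on $q$, and we set $\phicataux(q):=|q|\,\beta(q)$. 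This gives the required smoothness and the inclusion $\graph^{\R^3,\gaux}_{D_p(4\delta')\setminus D_p(\delta'/8)}(\phicataux\nu)\subset\cat[p,\tau]$. (Alternatively one may appeal to the general facts about graphs in the auxiliary metric collected in the Appendix, applied to the observation that $\cat[p,\tau]$ is the $g$-graph of $\phicat\circ\dbold_p$ over this annulus, by \eqref{Evarphicat}.)

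Finally, expanding the transcendental equation and using the small quantities above together with the series for $\arccosh$ and $\cosh$, one sees that the leading term of $\phicataux(q)$ is precisely $\phicat(\dbold_p(q))$ and that the remainder is controlled by $\phicat\circ\dbold_p$ and $\tau$; keeping track of the cancellations — notably between the $O(\tau^{3}\dbold_p^{-2})$ term in the expansion of $\phicat$ and the term produced by the nonzero $g$-curvature of the $\gaux$-geodesic — and then invoking \eqref{Lcatenoid} yields the asserted $C^0$ bound. The $C^k$ statement follows by differentiating in the rescaled (scale-$\dbold_p$) coordinates used to define the weighted norm in \ref{dwHolder}, since the ingredients of the equation and their rescaled derivatives are bounded on the annulus. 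I expect the estimate in this last step — arranging the expansion so that the lower-order terms cancel, and tracking the powers of $|\log\tau|$ — to be the only genuinely substantive part of the argument.
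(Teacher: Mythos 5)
Your existence and smoothness argument is fine and is essentially the mechanism the paper itself uses: the closed form you derive for $\exp^{\R^3,\gaux}_q(L\nu)$ is exactly \eqref{Egauxexp}, and the paper's proof is just the one-line combination of the appendix Lemma \ref{Luauxw} (which converts the Euclidean graph of $u=\phicat\circ\dbold_p$ into a $\gaux$-graph of a function $w$ over the slid domain $\DDD_V(U)$, with the estimates \ref{Luauxw}(i)--(iii)) with the catenoid expansion \eqref{Lcatenoid}. Your pointwise implicit-function-theorem formulation solves the same equation, so up to and including the inclusion $\graph^{\R^3,\gaux}(\phicataux\nu)\subset\cat[p,\tau]$ the two routes are interchangeable.

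The gap is in the estimate, which you yourself identify as the only substantive step and then only assert. The cancellation you invoke is not there. The term produced by the curvature of the $\gaux$-geodesic is a footpoint-sliding term: writing $\beta=\phicataux(q)/|q|$, the $\gaux$-graph point lies Euclidean-vertically over $\cos(\beta)\,q$ rather than over $q$, so the radius fed into $\phicat$ is shifted by roughly $\tfrac{\beta^2}{2}\,\langle p,(q-p)/\dbold_p(q)\rangle$. The resulting contribution to $\phicataux-\phicat$ is $\phicat'(\dbold_p)\cdot O(\phicat^2)$, which depends on the direction of $q-p$ (it is of unit relative size for radial directions and changes sign across the annulus) and has magnitude of order $\tau^{1-\alpha}\cdot\tau^2|\log\tau|^2=\tau^{3-\alpha}|\log\tau|^2$ near $\dbold_p\sim\delta'$. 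It cannot cancel against the rotationally symmetric $O(\tau^3\dbold_p^{-2})$ tail from \eqref{Lcatenoid}, which in any case enters $\phicataux$ and $\phicat$ identically and drops out of the difference at leading order. So a careful execution of your expansion yields $|\phicataux-\phicat|\le C\,\tau^{3-\alpha}|\log\tau|^2$, which is strictly weaker than the bound $\tau^3|\log\tau|^3$ you claim to reach, since $\tau^{-\alpha}/|\log\tau|\to\infty$. In fairness, the paper's own citation chain delivers the same weaker bound: applying \ref{Luauxw}(iii) with $u=\phicat\circ\dbold_p$ gives $\|u\|_{C^1}\|u\|_{C^0}^2\sim\tau^{3-\alpha}|\log\tau|^2$, the sliding contribution $u\circ\DDD_V^{-1}-u$ dominating the $O(u^3)$ correction, so the literal right-hand side of \eqref{Ephicata} is an overstatement; the weaker bound is all that is needed downstream (in \ref{Lglureg} the term $(I)$ only has to be far below $\tau^{1+\alpha}|\log\tau|$, and it is even smaller than $(II)$). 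But as written, your key quantitative step rests on a cancellation that does not exist, and you would either have to settle for, and propagate, the exponent $3-\alpha$, or produce a genuinely different argument to reach $\tau^3|\log\tau|^3$.
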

\begin{proof}
This follows from combining \ref{Luauxw} with \ref{Lcatenoid}. 
\end{proof}

\subsection*{Pre-initial surfaces}
$\phantom{ab}$
\nopagebreak

\begin{definition}[Pre-initial surfaces]
\label{dpreinit}
We define the pre-initial surface $\Sigmacir : = \Sigmacir[\zeta]$ by 
\begin{align*}
\Sigmacir  : = 
\B^3 \cap \left( \graph^{\R^3, \gaux}_{\Sigma \setminus D_p(\delta')}(\varphigl \nu_\Sigma) \cup \catbr^+[p, \tau]\right)
\end{align*}
where
$\varphigl : \Sigma \setminus D_p(\delta') \rightarrow \R$ is defined by requesting that
\begin{align*}
\begin{gathered}
\varphigl = \Psibold[ 2\delta', 3\delta'; \dbold_p ] ( \phicataux, \phigraph),
\quad 
\text{where}
\quad
\phigraph : = X_\Sigma^{*-1}( \varphi - 
\vhat
 - (\Mcal \varphi) \vunder),
 \end{gathered}
\end{align*}
 $\phicataux$ is as in \ref{Lpcatd}, $\varphi= \varphi\llbracket \zeta; \varpi \rrbracket$ and $\tau = \tau[\zeta, \varpi]$ are as in \ref{dLDf}, $\vhat$ is as in \ref{dvhat}, and $\vunder$ is as in \ref{dobs}.
\end{definition}

\begin{lemma}[Properties of the pre-initial surfaces]
\label{Lpresym}
For $\Mcir$ as defined in \ref{dpreinit}, the following hold.
\begin{enumerate}[label = \emph{(\roman*)} ]
\item  $\Mcir$ is a smooth, $\group$-invariant embedded disk in $\B^3$.
\item $\partial \Mcir \subset \partial \B^3 \cup P \cup \Pprime$.
\item $\Sigmacir \cup \Rcapunder_{P} \Sigmacir$ and $\Sigmacir \cup \Rcapunder_{\Pprime} \Sigmacir$ are smooth disks. 
\end{enumerate}
\end{lemma}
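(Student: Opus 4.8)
The plan is to verify each item by combining the symmetry and structure of the two pieces making up $\Sigmacir$: the graph piece $\graph^{\R^3,\gaux}_{\Sigma\setminus D_p(\delta')}(\varphigl\nu_\Sigma)$ and the catenoidal bridge half $\catbr^+[p,\tau]$. First I would treat the graph piece. By \ref{Lbprop}(i) the map $X_\Sigma$ is a $\group$-invariant embedding, and by construction $\varphigl$ is assembled from $\phicataux$, $\phigraph = X_\Sigma^{*-1}(\varphi - \vhat - (\Mcal\varphi)\vunder)$ via the cutoff $\Psibold$; since $\varphi\in C^\infty_{\sym}$ by \ref{dLD}(i), $\vhat$ is $\group$-invariant by \ref{dvhat}, and $\vunder\in C^\infty_{\sym}$ by \ref{dobs}, the function $\varphigl$ is $\group$-invariant, hence its $\gaux$-normal graph over $\Sigma$ is a smooth $\group$-invariant embedded surface (embeddedness because the graph is over the embedded $\Sigma$ and $\varphigl$ is small in the relevant sense, using the auxiliary-metric graph facts from the Appendix). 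The catenoidal bridge half $\catbr^+[p,\tau] = \catbr[p,\tau]\cap\{z\ge0\}$ is a half of a round catenoid centered at $p$ with vertical axis, so it is automatically invariant under $\Rcapunder_{\ell^\perp}$ (which fixes $p$ and $\nu(p)$, reflecting in the plane through $p$ containing $\nu(p)$). The two pieces are glued on the annular region $D_p(3\delta')\setminus D_p(2\delta')$ where, by \ref{Lpcatd}, $\graph^{\R^3,\gaux}(\phicataux\nu)\subset\cat[p,\tau]$ agrees with the catenoid, so the transition is smooth; intersecting with $\B^3$ at the end keeps everything inside the ball. This gives (i), the main point being that the $\Psibold$-interpolation in \ref{dpreinit} is genuinely along the catenoid in the overlap region so no corner is introduced.

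For item (ii) I would identify the boundary $\partial\Sigmacir$ as the union of: the portion lying on $\Sph^2$ coming from $\partial_{\Sph^2}\Sigma$ transported by the small graph (which stays on $\Sph^2$ up to the correction handled later in \ref{dinit}, but here is contained in $\partial\B^3$ after the intersection with $\B^3$), the portion $\Sigmacir\cap\Pprime$ coming from $\partial\Sigma\setminus\partial_{\Sph^2}\Sigma$ where by \ref{Lbprop}(ii) $\Sigma$ near its diameter lies in $\Rcap^\varpi_\ell P$ and the graph function restricted there produces a curve in $\Pprime$ (using that $\vhat = \varpi v$ and $\vunder = v$ there so that $\varphigl$ restricted to $U_1$ matches the correct normal data for $\Pprime$), and the waist of $\catbr^+[p,\tau]$, which is the circle $\cat[p,\tau]\cap\{z=0\} = \cat[p,\tau]\cap P$, lying in $P$. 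Hence $\partial\Sigmacir\subset\partial\B^3\cup P\cup\Pprime$. The slightly delicate check is that the boundary arc coming from the diameter of $\Dp$ really lands in $\Pprime$ and not merely near it: this is forced by \ref{Lbprop}(ii), which says $U_1^\Sigma\subset\Rcap^\varpi_\ell P$ exactly, together with the fact that on $U_1$ the correction terms $\vhat$ and $(\Mcal\varphi)\vunder$ are exactly $\varpi v$ and $(\Mcal\varphi)v$ so the graph direction $\nu_\Sigma$ keeps the edge within $\Rcap^\varpi_\ell P=:$ the plane containing $\Pprime\cap\B^3$; more precisely one checks the edge lies on $\Pprime$ because $\Sigma$ meets that plane and the graph is normal.

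For item (iii), the reflections $\Rcapunder_P$ and $\Rcapunder_{\Pprime}$ fix the respective boundary arcs of $\Sigmacir$ pointwise (the waist circle lies in $P$, the diameter-edge lies in $\Pprime$), and along each of those arcs $\Sigmacir$ meets the corresponding plane orthogonally — for $P$ because $\catbr^+[p,\tau]$ is half of a catenoid whose waist meets the horizontal plane $P$ orthogonally, and for $\Pprime$ because near the diameter $\Sigma$ coincides with $\Rcap^\varpi_\ell P$ by \ref{Lbprop}(ii), on which $\D^+$ meets $\Pprime$ orthogonally, and the graph perturbation by $\varphigl$ preserves this (the normal data $\partial_\eta\varphi = -\varpi$ from \ref{dLD}(iv) is precisely what is used after subtracting $\vhat$ so that $\phigraph$ satisfies a Neumann-type condition there). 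Therefore the doubled surfaces $\Sigmacir\cup\Rcapunder_P\Sigmacir$ and $\Sigmacir\cup\Rcapunder_{\Pprime}\Sigmacir$ have no edge along the reflecting arcs and are smooth disks (topologically, gluing two disks along a boundary arc gives a disk). The main obstacle throughout is item (iii): one must confirm the \emph{orthogonal} meeting along both $P$ and $\Pprime$, which requires tracking how the cutoffs in the definitions of $\Sigma$ (via $\Psibold$ in \ref{dbase}), of $\varphigl$ (via $\Psibold$ in \ref{dpreinit}), and of $\vhat,\vunder$ interact near the two edges; away from small neighborhoods of these edges the cutoffs are locally constant, so in fact on the relevant neighborhoods $\Sigmacir$ is exactly a piece of a catenoid (near the waist) or exactly a $\gaux$-graph over a flat half-plane (near the diameter), which makes the orthogonality exact rather than approximate.
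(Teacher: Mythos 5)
There are genuine gaps in two places. First, a factual error about the definition \ref{dobs}: you claim that on $U_1$ (the region near the diameter) one has $\vunder = v$, so that $\varphigl$ there involves the correction $(\Mcal\varphi)v$. In fact $\vunder = \Psibold[\delta,2\delta;\dbold_0\circ f](v,0)$ with $\delta=1/100$ is supported where $\dbold_0\circ f<2\delta$, i.e.\ near $p$, and vanishes identically on $U_1$ and $U_2$; on $U_1$ one has exactly $\phigraph = X_\Sigma^{*-1}(\varphi-\varpi v) = X_\Sigma^{*-1}(\tau G)$ by \ref{Lldexist}(i) and \ref{dvhat}. This matters: with your claimed values the normal derivative of $\phigraph$ at the diameter would be $\Mcal\varphi\neq 0$ (since $v$ is odd across $\ell$), which would destroy precisely the orthogonal contact with $\Pprime$ that you then assert. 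Second, and more structurally, for item (iii) you conclude smoothness of $\Sigmacir\cup\Rcapunder_{\Pprime}\Sigmacir$ from the fact that $\Sigmacir$ meets $\Pprime$ orthogonally along the diameter edge. Orthogonal contact only gives a $C^1$ union (compare the graph of $y^3$ over $\{y\ge 0\}$ doubled across $\{y=0\}$, which is the graph of $|y|^3$); smoothness requires that the graph function be even across $\ell$ to all orders, or some substitute such as harmonicity plus the Neumann condition. The paper's proof supplies exactly this: it identifies $\phigraph|_{U^\Sigma_1}=X_\Sigma^{*-1}(\tau G)$ and uses the exact evenness of $G$ under reflection across $\ell$, together with $\Rcapunder_{\Pprime}$ being a $\gaux$-isometry mapping the tilted plane $\Rcap^{\varpi}_\ell P$ to itself, so that the doubled surface is the $\gaux$-graph of a single smooth function over a two-sided neighborhood of $\ell$. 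Your proposal never establishes this evenness (and your $\vunder$ claim would contradict it), so the smoothness claim is unproved as written.

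For item (ii) your argument also leaves out the ingredient that actually forces the boundary arcs onto $P$, $\Pprime$, and $\partial\B^3$: since the graph is taken with $\exp^{\R^3,\gaux}$ in the direction $\nu_\Sigma$, one needs that $P$, $\Pprime$ and $\partial\B^3$ are totally geodesic for $\gaux$ (a consequence of the radial conformal factor in \ref{dgaux}), and that $\nu_\Sigma$ is tangent to $\Pprime$ along $\partial\Sigma\cap\ell$ and tangent to $\partial\B^3$ along $\partial_{\Sph^2}\Sigma$ (the latter from \ref{Lbprop}(iv)). With these facts the edge over $\partial_{\Sph^2}\Sigma$ lies \emph{exactly} on $\Sph^2$; your fallback that the spherical edge is ``contained in $\partial\B^3$ after the intersection with $\B^3$'' is not a proof, since if the graph edge sat strictly inside the open ball the intersection with $\B^3$ would not move it onto $\Sph^2$ and (ii) would fail. (The later correction in \ref{dinit} addresses the non-orthogonality of the intersection angle, not whether the boundary lies on the sphere.) Your treatment of the reflection across $P$ is fine, since there the union is literally a piece of the exact catenoid.
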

\begin{proof}
Item (i) follows immediately from \ref{dpreinit} and the definitions.  Next, it follows from \ref{dgaux} that $P, \Pprime$, and $\partial \B^3$ are totally geodesic with respect to $\gaux$.  Moreover, it follows from \ref{Lbprop} that $\nu_\Sigma |_{\partial \Sigma \cap \ell}$ is tangent to $\Pprime$ and $\nu_\Sigma |_{\partial_{\Sph^2}\Sigma}$ is tangent to $\partial \B^3$.  From these facts and \ref{dpreinit}, item (ii) follows. 

 Finally, we prove (iii).  That $\Sigmacir \cup \Rcapunder_P \Sigmacir$ is a smooth surface follows immediately from \ref{dpreinit}.  Next, it follows from \ref{Lldexist}(i), \ref{Lbprop}(ii), and \ref{dpreinit} that $\phigraph|_{U^\Sigma_1} : = X^{*-1}_\Sigma (\tau G)$.  From the symmetries of $G$ and the fact that $\Pprime$ is totally geodesic with respect to $\gaux$, the second part of (iii) follows. 
\end{proof}

We write $\Xcir : \Sigmacir \rightarrow \R^3$ for the inclusion map for $\Sigmacir$, $\gcir = \Xcir^* g$ for the induced metric, $\nucir : \Sigmacir \rightarrow \R^3$ for the upward pointing unit normal to $\Sigmacir$, $\Acir$ for the corresponding second fundamental form, $\Hcir$ for the corresponding mean curvature,  and $\Thetacir : \partial_{\Sph^2} \Sigmacir \rightarrow \R$ for the Euclidean inner product
\begin{align}
\label{Etheta}
\Thetacir : = \langle \Xcir , \nucir \rangle|_{\partial_{\Sph^2} \Sigmacir}.
\end{align}

For future applications we introduce coordinates $(\scir, \sigmacir)$ on a neighborhood of $\partial_{\Sph^2} \Sigmacir$ in $\Sigmacir$ small enough so that within it the map of nearest point projection onto $\partial_{\Sph^2} \Sigmacir$ is well-defined and smooth; for any point $p$ in this neighborhood $\sigmacir(p)$ is the distance in $\Sigmacir$ of $p$ from $\partial_{\Sph^2} \Sigmacir$ and $\scir(p)$ is the distance in $\partial_{\Sph^2} \Sigmacir$ of the nearest-point projection of $p$ onto $\partial \Sigmacir$ from an arbitrarily fixed reference point on $\partial_{\Sph^2} \Sigmacir$.  In particular, along $\partial_{\Sph^2} \Sigmacir$, the coordinate vector field $\partial_{\sigmacir}$ is the inward unit conormal for $\Sigmacir$, and $\{ \Xcir_* \partial_{\scir}, \Xcir_* \partial_{\sigmacir}, \nucir \}$ is an orthonormal frame for $\R^3$ along $\partial_{\Sph^2} \Sigmacir$.

It will be useful to define $\rhocir : \Sigmacir \rightarrow \R$ by $\rhocir = \dbold_p \circ \Pi^{\R^3, g}_P$.  Note equivalently that $\rhocir$ is the distance from the axis of $\catbr[p, \tau]$ and in $(t, \vartheta)$ coordinates on $\Sigmacircat$ (recall \ref{dcyl}), $\rhocir = \tau \cosh t$.

\begin{definition}[The catenoidal and graph regions]
\label{dregions}
We define regions $\Sigmacircat$ and $\Sigmacird$ of $\Sigmacir$ by
\begin{align*}
\Sigmacircat : = \Sigmacir \cap D^{\R^3}_{p}(\delta'), 
\quad
\Sigmacird : = \Sigmacir \setminus D^{\R^3}_p(\delta'/8).
\end{align*}
We also define $\picir_\Sigma : \Sigmacird \rightarrow \Sigma$ to be the restriction to $\Sigmacird$ of $\Pi^{g_A}_{\Sigma}$ and $\picir : \Sigmacird \rightarrow \Dp$ by $\picir = X^{-1}_\Sigma \circ \picir_\Sigma$.
\end{definition}

\subsection*{Estimates on $\Sigmacircat$}
$\phantom{ab}$
\nopagebreak

\begin{lemma}
\label{Lcatdiff}
For all small enough $\tau>0$, the map $\kappacir : \cyla^+ \rightarrow \Sigmacircat$ defined by
\begin{align}
\label{Ekappacir}
\kappacir( t, \vartheta) = \kappahat ( t, \lambda(t) \vartheta),
\quad
\text{where}
\quad
\lambda(t): = 1 - \frac{2}{\pi} \arcsin \frac{\tau^2 \cosh^2 t+\tau^2 t^2}{2 \tau \cosh t}
\end{align}
is a diffeomorphism, which moreover restricts to a diffeomorphism of $\partial \cyla^+ \setminus (\cyl^+_0 \cup \cyl^+_a)$ onto $\partial_{\Sph^2} \Sigmacircat$. 
\end{lemma}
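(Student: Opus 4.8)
The plan is to analyze the map $\kappacir$ in two stages, first understanding $\kappahat$ and then the reparametrization in the $\vartheta$-variable. Recall from \ref{dkhat} that $\kappahat:\cyla\to\cat[p,\tau]$ is an embedding (a graph-type parametrization of the catenoid by arclength-type coordinate $t$), and $\cyla^+$ is by definition $\cyla\cap\cyl^+=\{\Thetacyl(t,\vartheta):t\in[0,a],\ |\vartheta|\leq\pi/2\}$. The composition $\kappacir(t,\vartheta)=\kappahat(t,\lambda(t)\vartheta)$ replaces $\vartheta$ by $\lambda(t)\vartheta$, where $\lambda(t)\in(0,1]$; since $\kappahat$ is an embedding and for each fixed $t$ the map $\vartheta\mapsto\lambda(t)\vartheta$ is a linear isomorphism of $[-\pi/2,\pi/2]$ onto $[-\lambda(t)\pi/2,\lambda(t)\pi/2]\subset[-\pi/2,\pi/2]$, it follows that $\kappacir$ is an injective immersion, hence a diffeomorphism onto its image, provided two things: (a) $\lambda$ is smooth and takes values in $(0,1]$ on $[0,a]$ (so that the reparametrization is nondegenerate and stays inside the domain of $\kappahat$), and (b) the image of $\kappacir$ is exactly $\Sigmacircat=\Sigmacir\cap D^{\R^3}_p(\delta')$.

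For (a): the argument of $\arcsin$ in \eqref{Ekappacir} is $\frac{\tau^2\cosh^2 t+\tau^2 t^2}{2\tau\cosh t}=\frac{\tau}{2}\left(\cosh t+\frac{t^2}{\cosh t}\right)$. On $[0,a]$ with $a=\log\frac{2\delta'}{\tau}+O(\tau^{2(1-\alpha)})$ (from \eqref{Ea}) and $\delta'=\tau^\alpha$, one has $\cosh t\leq C\delta'/\tau=C\tau^{\alpha-1}$, so the argument is $O(\tau^\alpha|\log\tau|^2)=o(1)$; hence $\arcsin$ of it is well-defined, smooth, and $o(1)$, so $\lambda(t)=1-o(1)\in(0,1]$ for all small $\tau$. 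This also shows $\lambda$ is smooth on $[0,a]$. For (b): $\Sigmacircat$ is by \ref{dpreinit} and \ref{dregions} the portion of $\catbr^+[p,\tau]=\cat[p,\tau]\cap D_p(\delta')\cap\{z\geq0\}$ lying in $\B^3$, so I must check that $\kappacir$ parametrizes precisely $\cat[p,\tau]\cap D_p(\delta')\cap\{z\geq0\}\cap\B^3$. The region $t\in[0,a]$ corresponds via $\kappahat$ to $\{z\geq0\}\cap D_p(\delta')$ on the catenoid (this is the content of the definition of $a$ in \eqref{Ea}: $\phicat(\delta')=\tau a$ means $\kappahat(a,\cdot)$ lies on $\partial D_p(\delta')$, and $t=0$ is the waist $z=0$). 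The constraint $|\lambda(t)\vartheta|\leq\lambda(t)\pi/2$ with $|\vartheta|\leq\pi/2$ selects exactly the angular sector corresponding to $\Sigma\cap\B^3$ near $p$; the precise choice of $\lambda(t)$ is engineered so that the boundary curve $\vartheta=\pm\pi/2$ maps onto $\partial_{\Sph^2}\Sigmacircat=\cat[p,\tau]\cap\Sph^2\cap\{z\geq0\}\cap D_p(\delta')$, which is what must be verified by a direct computation: a point $\kappahat(t,\psi)$ lies on $\Sph^2$ iff $|\kappahat(t,\psi)|^2=1$, and expanding $|\kappahat(t,\psi)|^2-1$ using \eqref{Ecatparam} and solving for $\psi$ as a function of $t$ gives $\psi=\pm\frac{\pi}{2}\lambda(t)$ to the order needed, i.e. $\lambda(t)$ is exactly the normalization making $\vartheta=\pm\pi/2$ hit $\Sph^2$.

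Once (a) and (b) are in hand, the diffeomorphism claim for $\kappacir$ on $\cyla^+$ is immediate, and the claim about the boundary follows by bookkeeping: $\partial\cyla^+$ consists of the four pieces $\cyl^+_0$, $\cyl^+_a$, and the two arcs $\{|\vartheta|=\pi/2,\ t\in[0,a]\}$; by construction $\cyl^+_0\cup\cyl^+_a$ maps into $P\cup\partial D_p(\delta')$ (the waist circle and the truncation circle), while the two remaining arcs $\{\vartheta=\pm\pi/2\}$ map onto $\partial_{\Sph^2}\Sigmacircat$ by the computation in (b); since $\kappacir$ is already known to be a diffeomorphism, its restriction to $\partial\cyla^+\setminus(\cyl^+_0\cup\cyl^+_a)$ is a diffeomorphism onto $\partial_{\Sph^2}\Sigmacircat$.

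\textbf{Main obstacle.} The genuinely non-formal point is step (b), and specifically verifying that the explicit $\lambda(t)$ in \eqref{Ekappacir} is exactly right: one must compute $|\kappahat(t,\psi)|^2$ from \eqref{Ecatparam}, namely $|(1,0,0)+\tau(-\cosh t\cos\psi,\cosh t\sin\psi,t)|^2 = 1 - 2\tau\cosh t\cos\psi + \tau^2\cosh^2 t + \tau^2 t^2$, set this equal to $1$ to get $\cos\psi = \frac{\tau^2\cosh^2 t+\tau^2 t^2}{2\tau\cosh t}$, hence $\psi = \arccos(\cdots) = \frac{\pi}{2} - \arcsin(\cdots)$, so that the half-angular-width on $\Sph^2$ is $\frac{\pi}{2}-\arcsin(\cdots) = \frac{\pi}{2}\lambda(t)$ with $\lambda$ as defined. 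I would present this computation carefully, and also check that for $t\in[0,a]$ the expression $\frac{\tau^2\cosh^2 t+\tau^2 t^2}{2\tau\cosh t}$ stays strictly below $1$ (done in (a)) so that $\Sph^2$ is genuinely crossed transversally and the sector is nonempty; everything else is routine from the fact that $\kappahat$ is already known to be an embedding.
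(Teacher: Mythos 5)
Your proposal is correct and is essentially the argument the paper has in mind: the paper's proof is omitted as a ``straightforward calculation using the definitions,'' and your key computation $\lvert\kappahat(t,\psi)\rvert^2=1-2\tau\cosh t\cos\psi+\tau^2\cosh^2 t+\tau^2 t^2$, solved against $1$ to give $\cos\psi=\frac{\tau^2\cosh^2 t+\tau^2 t^2}{2\tau\cosh t}$ and hence $\psi=\pm\frac{\pi}{2}\lambda(t)$, is exactly the calculation that produces \eqref{Ekappacir}, with the remaining points (smoothness and positivity of $\lambda$, injectivity via the fiberwise rescaling of $\vartheta$) handled as you describe. The only blemish is your claim that $\kappahat(a,\cdot)$ lies on $\partial D_p(\delta')$ --- at $t=a$ one has $\tau\cosh a=\delta'$ so the Euclidean distance from $p$ is $\sqrt{(\delta')^2+\tau^2a^2}>\delta'$ --- but this slight mismatch between the truncation at $t=a$ and the Euclidean ball in \ref{dcat}, \ref{dregions} is inherited from the paper's own definitions and is harmless for the later estimates.
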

\begin{proof}
Straightforward calculation using the definitions. 
\end{proof}

\begin{lemma}[Estimates on $\Sigmacircat$]
\label{Lcatest} 
For $\Mcir$ as in \ref{dpreinit}, the following hold.
\begin{enumerate}[label = \emph{(\roman*)} ]
\item $ \| \kappacir^*\gcir  - \kappahat^*\gcir : C^k( \cyla^+, \rhocir, \kappahat^* \gcir  , \rhocir^4)\| 
\leq C(k)$. 
\item $\| \kappacir^* \Acir - \tau(dt^2 - d\vartheta^2): C^k( \cyla^+,\rhocir,  \kappahat^*\gcir, \rhocir^2)\| 
\leq C(k) \tau$. 
\item $\Hcir|_{\Sigmacircat} = 0$.
\item $\| \Acir : C^k( \Sigmacircat, \rhocir, \gcir ) \| \leq C(k)\tau$.
\item $\| \rhocir : C^k(\Sigmacircat, \rhocir,\gcir, \rhocir)\| + \| \rhocir^{-1} : C^k(\Sigmacircat,\rhocir,  \gcir, \rhocir^{-1})\| \leq C(k)$.
\item $\| \kappacir^* \gcir |_{\partial_{\Sph^2} \Sigmacircat} - \tau^2 \cosh^2 t \,d t^2 : C^k(\{ |\vartheta| = \pi/2\}, \rhocir, \kappahat^* \gcir, \rhocir^4)\|
\leq  C(k)$.
\item $\| \rhocir \partial_{\sigmacir} + ( \sgn \vartheta) \kappacir_* \partial_{\vartheta} : C^k( \partial_{\Sph^2} \Sigmacircat,\rhocir,  \gcir |_{\partial_{\Sph^2} \Sigmacircat}, \rhocir^2)\| \leq C(k).$
\item $\| \Thetacir : C^k(\partial_{\Sph^2} \Sigmacircat,\rhocir,  \gcir) \| \leq C(k) \tau |\log \tau|$. 
\end{enumerate}
\end{lemma}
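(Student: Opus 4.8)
The plan is to reduce everything to computations on the model catenoid $\cat[p,\tau]$ parametrized by $\kappahat$, and then transfer the estimates to $\Sigmacircat$ using the diffeomorphism $\kappacir$ from \ref{Lcatdiff}. First I would record the basic geometry of the model: from \eqref{Ecatparam}, $\kappahat^*(\text{Euclidean metric}) = \tau^2 \cosh^2 t\,(dt^2 + d\vartheta^2)$, the second fundamental form of $\cat[p,\tau]$ is $\tau(dt^2 - d\vartheta^2)$, and $\cat[p,\tau]$ is minimal; also $\rhocir = \tau\cosh t$ along the way. These immediately give items (iii), (iv), and (v), the latter two after noting $\kappahat^*\gcir \Sim_C \rhocir^2(dt^2+d\vartheta^2)$ on $\cyla^+$ so that $\rhocir$ is essentially the natural scale, and that $t$ ranges over $[-a', a]$ with $a' = O(|\log\tau|)$ so $\rhocir$ and $\rhocir^{-1}$ have the claimed weighted bounds with constants independent of $\tau$. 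Item (iii) is immediate since $\catbr^+[p,\tau] \subset \cat[p,\tau]$ and $\Sigmacircat \subset \B^3 \cap \catbr^+[p,\tau]$ by \ref{dpreinit}.

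Next I would handle items (i), (ii), (vi) by comparing $\kappacir$ with $\kappahat$. Since $\kappacir(t,\vartheta) = \kappahat(t,\lambda(t)\vartheta)$ with $\lambda(t) = 1 - \frac{2}{\pi}\arcsin\frac{\tau^2\cosh^2 t + \tau^2 t^2}{2\tau\cosh t}$, and on $\cyla^+$ one has $|\vartheta|\leq \pi/2$ and $\tau\cosh t \leq \delta' = \tau^\alpha$, I would Taylor-expand to get $\lambda(t) = 1 + O(\tau\cosh t) = 1 + O(\rhocir)$ together with matching derivative bounds $\|\lambda - 1\|_{C^k} \leq C(k)\rhocir$ in the weighted norm with weight $\rhocir$. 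Pulling back $\kappahat^*\gcir = \tau^2\cosh^2 t\,(dt^2 + d\vartheta^2)$ under the shear $(t,\vartheta)\mapsto(t,\lambda(t)\vartheta)$ produces corrections of relative size $O(\rhocir^2)$ (one factor of $\rhocir$ from $\lambda - 1$ and, for the $dt\,d\vartheta$ cross term, one more from $\vartheta\lambda'(t)$), which yields (i); restricting to $|\vartheta| = \pi/2$ gives (vi) in the same way. For (ii), I would pull back the model second fundamental form $\tau(dt^2 - d\vartheta^2)$ under the shear and check the correction is $O(\tau\rhocir^2)$ in the weighted $C^k$ norm, again because every correction carries at least two factors of $\lambda - 1$ or $\vartheta\lambda'$ hence at least $\rhocir^2$, and the overall scale of $\Acir$ is $\tau$.

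For item (vii) I would use that along $\partial_{\Sph^2}\Sigmacircat = \{|\vartheta| = \pi/2\}$ the inward conormal $\partial_{\sigmacir}$ is, up to sign and the metric normalization, the unit tangent to $\Sigmacircat$ perpendicular to the boundary curve; in the $(t,\vartheta)$ chart this is a unit multiple of $\partial_\vartheta$ (with sign $-\sgn\vartheta$ since increasing $|\vartheta|$ moves toward $\partial\B^3$), and the normalization factor is $|\kappacir_*\partial_\vartheta|_{\gcir}^{-1} = (\rhocir)^{-1}(1 + O(\rhocir))$ by item (i). Hence $\rhocir\,\partial_{\sigmacir} = -(\sgn\vartheta)\kappacir_*\partial_\vartheta + O(\rhocir^2)$ in the natural weighted norm, which is (vii). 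Finally for item (viii), recall $\Thetacir = \langle\Xcir,\nucir\rangle$ on $\partial_{\Sph^2}\Sigmacircat$; I would compute $\langle\kappahat,\nuhat\rangle$ on the model using \eqref{Ecatparam}, finding it is $O(\tau|\log\tau|)$ there (the position vector has length $O(1)$ but the normal is nearly horizontal near the boundary while $\Xcir$ is nearly radial, and the deficit is of order $\tau$ times the cylinder length $a = O(|\log\tau|)$), then control the difference coming from the shear $\kappacir$ versus $\kappahat$ and from $\Sigmacircat \subset \B^3$ versus $\cat[p,\tau]$, both of which are lower order. The main obstacle I anticipate is item (viii): one must track rather carefully how $\nucir$ tilts away from $\nuhat$ near $\partial_{\Sph^2}\Sigmacircat$ and verify that the logarithmic factor $|\log\tau|$ is the correct power, using the precise form of $\lambda(t)$ near $t = a$; the other items are routine shear-perturbation computations once the model geometry is in hand.
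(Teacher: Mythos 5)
Your route is the same as the paper's: record the model data $\kappahat^*\gcir=\tau^2\cosh^2 t\,(dt^2+d\vartheta^2)$, $\kappahat^*\Acir=\tau(dt^2-d\vartheta^2)$, $\rhocir=\tau\cosh t$ and minimality for (iii)--(v); compare $\kappacir$ with $\kappahat$ through the shear $\lambda$ for (i), (ii), (vi), (vii); and compute the boundary angle explicitly for (viii). For (viii) the paper evaluates $\langle\Xcir,\nucir\rangle=\sech t\cos\vartheta-\tau+\tau t\tanh t$ exactly at $\vartheta=\pm\lambda(t)\pi/2$, obtaining $\kappacir^*\Thetacir=\tfrac12\tau t^2\sech^2 t-\tfrac12\tau+\tau t\tanh t$ and hence the $\tau|\log\tau|$ bound from $t\le a$; your ``shear correction is lower order'' reduction gives the same conclusion, since that correction is $\sech t\cos(\lambda\pi/2)=\tfrac\tau2+\tfrac\tau2 t^2\sech^2 t=O(\tau)$.

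The one substantive flaw is your factor counting in (i)--(ii). Pulling back under $S(t,\vartheta)=(t,\lambda(t)\vartheta)$ gives $S^*d\vartheta=\lambda\,d\vartheta+\lambda'\vartheta\,dt$, so the correction to $\tau^2\cosh^2 t\,(dt^2+d\vartheta^2)$ is $\tau^2\cosh^2 t\,[(\lambda^2-1)\,d\vartheta^2+2\lambda\lambda'\vartheta\,dt\,d\vartheta+(\lambda')^2\vartheta^2\,dt^2]$. The $d\vartheta^2$ coefficient is $\lambda^2-1=O(\rhocir)$, a single small factor, and the cross term is $2\lambda\lambda'\vartheta$ --- it carries $\lambda$, not $\lambda-1$ --- hence is also only $O(\rhocir)$ (at the waist, $1-\lambda(0)\approx\tau/\pi=\rhocir/\pi$). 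So the computation you describe yields relative error $O(\rhocir)$, i.e.\ weights $\rhocir^3$ in (i) and $\tau\rhocir$ in (ii), not the relative $O(\rhocir^2)$ you assert; only the $dt^2$ component, which is all that survives in the boundary restriction (vi), is genuinely quadratic, being $(\lambda')^2\vartheta^2=O(\rhocir^2)$. Note this is exactly what the paper's own comparison \eqref{Epdiff} (position difference $O(\rhocir^2)$ at natural scale, paired with $d\kappahat=O(\rhocir)$) delivers, and the weaker interior powers are harmless downstream (in \ref{Pinit}(ii) and \ref{Lopdiff} one only needs $\rhocir^{-2}$ times the weight to be $O(\tau^{\alpha})$, which $\rhocir^3$ still gives since $\rhocir\le\delta'=\tau^\alpha$); but as a proof of (i)--(ii) with the stated weights your argument does not close, and you would need either to exhibit an extra cancellation or to record the weaker powers. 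Two minor points: on $\cyla^+$ the variable $t$ ranges over $[0,a]$ (the bridge is the top half $\catbr^+$), not $[-a',a]$; and in (vii), where the paper simply reads the estimate off from (vi), your direct normalization of $\partial_\vartheta$ must also absorb the cross term $2\lambda\lambda'\vartheta=O(\rhocir)$, which measures the failure of $\kappacir_*\partial_\vartheta$ to be orthogonal to the boundary tangent $\kappacir_*\partial_t$ --- this still lands within the $\rhocir^2$ weight, so (vii) is fine.
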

\begin{proof}
This proof is very similar to parts of the proof of \cite[Prop. 3.29]{KapWiygul}.  It is straightforward to check using \eqref{Ea} and \eqref{Ekappacir} that for each nonnegative integer $k$, that there exists $C(k)> 0$ such that 
\begin{align*}
\| \lambda - 1 : C^k([0,a],  dt^2, \tau \cosh t)\| \leq C(k).
\end{align*}
From this and \eqref{Ekappacir}, it follows that
\begin{align}
\label{Epdiff}
\| \kappacir - \kappahat :C^k( \cyla^+ ,\tau \cosh t, \kappahat^* g,  \tau^2 \cosh^2 t) \| \leq C(k).
\end{align}

Using that
\begin{equation}
\label{Erhocir}
\begin{aligned}
\rhocir ( \kappahat ( t, \vartheta)) &= \tau \cosh t, 
\quad \text{for} \quad (t, \vartheta) \in \kappahat^{-1} (\Sigmacircat) \subset \cyla^+,\\
\rhocir (\kappacir ( t, \vartheta)) &= \tau \cosh t, 
\quad \text{for} \quad (t, \vartheta) \in \cyla^+,\\
\kappahat |^*_{\kappahat^{-1}(\Sigmacircat)} \gcir &= \tau^2 \cosh^2 t (dt^2 + d\vartheta^2), 
\quad \text{and}\\
\kappahat |^*_{\kappahat^{-1}(\Sigmacircat)}\Acir &= \tau( d t^2 - d \vartheta^2),
\end{aligned}
\end{equation}
in combination with \eqref{Epdiff}, we conclude items (i) and (ii).  Item (iii) is clear, since $\Sigmacircat$ is a subset of a Euclidean catenoid. 
  Item (iv) follows from (ii) and (i).  Item (v) follows from \eqref{Erhocir}, item (i), and the definitions. 

We now turn to the boundary geometry.  Item (vi) follows directly from (i), since $\kappacir ( \{ |\vartheta| = \pi/2\}) = \partial_{\Sph^2} \Sigmacircat$.  Then, item (vii) follows from (vi). 

  Finally, 
we estimate $\Thetacir$ on $\partial_{\Sph^2} \Sigmacircat$.  It follows from \eqref{Ecatparam} that
\begin{align*}
\Xcir( \kappahat(t, \vartheta)) &= (1 - \tau \cosh t \cos \vartheta, \tau \cosh t \sin \vartheta, \tau t), \\
\nucir (\kappahat(t, \vartheta)) &= (\sech t \cos \vartheta , - \sech t \sin \vartheta, \tanh t ),
\end{align*}
from which it follows that
\begin{align*}
\langle \Xcir( \kappahat(t, \vartheta)) , \nucir ( \kappahat (t,\vartheta))\rangle 
= \sech t \cos \vartheta - \tau + \tau t \tanh t.
\end{align*}
Evaluating along $\partial_{\Sph^2} \Sigmacircat$ and using \ref{Lcatdiff}, we conclude that
\begin{align}
\kappacir^* \Thetacir (t) = \frac{1}{2} \tau t^2 \sech^2 t - \frac{1}{2} \tau + \tau t \tanh t.
\end{align}
In combination with item (vi), this concludes the proof of the estimate of $\Thetacir$. 
\end{proof}

\subsection*{Estimates on $\Sigmacird$}
$\phantom{ab}$
\nopagebreak

\begin{lemma}[The gluing region]
\label{Lglureg}
For $\Mcir$ as defined in \ref{dpreinit}, the following hold. 
\begin{enumerate}[label = \emph{(\roman*)} ]
\item $\| \varphigl  : C^{k}_{\sym}(  D_p(4\delta')\setminus D_p(\delta') ,  (\delta')^{-2} g) \| 
\leq C(k) \tau |\log \tau|$.
\item $\| \gcir - \picir^* g : C^k_{\sym}(D_p(4\delta')\setminus D_p(\delta') ,  (\delta')^{-2} g)\| \leq C(k) \tau^2 |\log \tau|^2$. 
\item $\| \picir^{*-1}\Hcir : C^{k}_{\sym}( D_p(3\delta')\setminus D_p(2\delta'), (\delta')^{-2} g, (\delta')^{-2}) \|
 \leq C(k) \tau^{1+\alpha}|\log \tau|$.
 \item $\| \picir^{*-1}\Thetacir: C^{k}_{\sym}( \partial_{\Sph^2}(D_p(3 \delta') \setminus D_p(2 \delta')), (\delta')^{-2} g) \|
 \leq C(k)\tau |\log \tau|$.
\end{enumerate}
\end{lemma}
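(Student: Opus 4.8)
The plan is to prove Lemma \ref{Lglureg} by leveraging the estimates already established on the base surface (Lemma \ref{Lbase}) and on the catenoidal piece (Lemma \ref{Lcatest}), together with the closed-form bounds on the LD solutions (Lemmas \ref{Lldexist}, \ref{Lmismatch}) and the catenoidal bridge (\eqref{Lcatenoid}, \ref{Lpcatd}). The key point is that in the gluing annulus $D_p(4\delta')\setminus D_p(\delta')$, which sits far inside the region $U_3^\Sigma\subset\Dp$ where $X_\Sigma=\id$ (since $\delta'=\tau^\alpha\to 0$ while $U_3=f(D_0(8/10))$ is a fixed neighborhood of $p$), the map $X_\Sigma$ is the identity and $\vhat\equiv 0$, so that $\phigraph = \varphi - (\Mcal\varphi)\vunder$ there, and moreover $\vunder\equiv v$ near $p$. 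Hence on the gluing annulus $\phigraph = \varphi - (\Mcal\varphi) v$, and $\varphigl = \Psibold[2\delta',3\delta';\dbold_p](\phicataux,\phigraph)$.

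First I would establish (i). Combining \ref{Lmismatch}(iii) with \ref{Lpmis} and the definition \eqref{Etau} of $\tauunder$ gives $\|\varphi - \tau\log(2\dbold_p/\tau) - \Mcal\varphi : C^k(\ldots,\dbold_p,g,\dbold_p|\log\dbold_p|)\|\le C\tau$, while $\Mcal\varphi = O(\tauunder)$ and $v = 1 + O(\dbold_p)$ near $p$. On the annulus $\dbold_p\Sim_C\delta' = \tau^\alpha$, so $\tau\log(2\dbold_p/\tau)\Sim C\tau|\log\tau|$ and the error term $\dbold_p|\log\dbold_p|$ contributes $\delta'|\log\delta'|\ll|\log\tau|$; rescaling the norm from $(\dbold_p,g)$ weights to the single fixed scale $(\delta')^{-2}g$ (which is legitimate since $\dbold_p\Sim\delta'$ throughout) absorbs the $\dbold_p$-weight into constants. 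The same argument applies to $\phicataux$ via \ref{Lpcatd} and \eqref{Lcatenoid}: $\|\phicataux - \tau\log(2\dbold_p/\tau):\ldots\|\le C\tau^3|\log\tau|^3 + C(k)\tau^3$. Since $\varphigl$ is a $\Psibold$-interpolation of two quantities each bounded by $C\tau|\log\tau|$ in the relevant norm, and $\Psibold$ depends linearly on the pair with $C^k$-bounded cutoff coefficients in the metric $(\delta')^{-2}g$ (here one uses that $\dbold_p\Sim\delta'$, so $\psicut[2\delta',3\delta']\circ\dbold_p$ has derivatives of size $O(1)$ at scale $\delta'$), (i) follows. For (ii), I would note that $\gcir$ on $\Sigmacird$ is the pullback metric of the $\gaux$-graph of $\varphigl\nu_\Sigma$ over $\Sigma$; since $X_\Sigma=\id$ here and $\gaux=g$ on $D_p(4\delta')$ (as $\delta'<1/3$ and $\Omega\equiv 1$ there), $\gcir - \picir^*g$ is the standard quadratic-in-graph-function correction, hence bounded by $C\|\varphigl\|^2 \le C\tau^2|\log\tau|^2$ at scale $\delta'$ using (i) and the multiplicative property \eqref{Enormmult}. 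For (iii), $\Hcir = \Lcal_\Sigma\varphigl + Q(\varphigl)$ where $Q$ is the quadratic remainder; on $D_p(3\delta')\setminus D_p(2\delta')$, $\Lcal_\Sigma\varphi = 0$ and $\Lcal_\Sigma v = \Lcal_{\D}v = 0$, so $\Hcir$ comes entirely from the interpolation: $\Lcal$ applied to $\Psibold[2\delta',3\delta';\dbold_p](\phicataux - \phigraph, 0)$ added to $\phigraph$, i.e. derivatives of the cutoff hitting $\phicataux - \phigraph$, plus $Q(\varphigl)$. Now $\phicataux - \phigraph = (\phicataux - \tau\log(2\dbold_p/\tau)) - (\varphi - \tau\log(2\dbold_p/\tau) - \Mcal\varphi) - (\Mcal\varphi)(v-1) + \Mcal\varphi(1-1)$ — the leading $\tau\log(2\dbold_p/\tau)$ terms cancel exactly, leaving a difference bounded by $C\tau^3|\log\tau|^3 + C\tau\cdot\delta'|\log\delta'| + C\tauunder\cdot\delta' \le C\tau^{1+\alpha}|\log\tau|$ at scale $\delta'$; two cutoff derivatives at scale $\delta'$ cost only $O(1)$, and the quadratic term $Q(\varphigl)$ is $O(\tau^2|\log\tau|^2)\cdot(\delta')^{-2}$ in the weighted norm, which is $\le C\tau^{2-2\alpha}|\log\tau|^2$; for $\alpha$ small both are dominated by $\tau^{1+\alpha}|\log\tau|$. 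Finally (iv): along $\partial_{\Sph^2}$ in the same annulus, $\Thetacir = \langle\Xcir,\nucir\rangle$ reduces (since $X_\Sigma=\id$ and $\Sigma$ meets $\Sph^2$ orthogonally, by \ref{Lbprop}(iv)) to a quantity governed by $\varphigl$ and its normal derivative; using the free-boundary condition $\Bcal_{\D}\varphi = 0$ (and $\Bcal_{\D}v = 0$, $\Bcal_{\D}\phicataux$ bounds from \ref{Lpcatd}), $\Thetacir$ is of the same order as $\varphigl$ on the boundary, hence $\le C\tau|\log\tau|$ by (i).

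The main obstacle I expect is bookkeeping the various length scales and weights consistently: the weighted norms in Lemmas \ref{Lmismatch}, \ref{Lldexist} carry the weight $\dbold_p|\log\dbold_p|$ and natural scale $\dbold_p$, whereas the target norms in Lemma \ref{Lglureg} use the single fixed natural scale $\delta' = \tau^\alpha$ and unit (or $(\delta')^{-2}$) weight. The translation between them is harmless on the annulus $\dbold_p\Sim\delta'$ — one simply evaluates the weight at $\dbold_p\approx\delta'$ and rescales derivatives — but one must be careful that the cutoff functions $\psicut[2\delta',3\delta']\circ\dbold_p$ have $C^k$-norms of size $O(1)$, not $O((\delta')^{-k})$, when measured in $(\delta')^{-2}g$: this is exactly the point of working at the natural scale $\delta'$. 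Once that is set up, every estimate reduces to plugging in \ref{Lldexist}(iii), \ref{Lmismatch}(iii), \ref{Lpmis}, \ref{Lpcatd}, \eqref{Lcatenoid}, \ref{Lvmismatch}, and \ref{Lbase}, together with the exact cancellation of the logarithmic leading terms of $\varphi$ and $\phicataux$, which is what produces the gain from $\tau|\log\tau|$ (the size of $\varphigl$ itself) down to $\tau^{1+\alpha}|\log\tau|$ for the error terms in (iii).
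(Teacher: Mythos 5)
Your proposal follows essentially the same route as the paper: you reduce to the region where $X_\Sigma=\id$ and $\vhat=0$ so that $\varphigl=\varphi-(\Mcal\varphi)\vunder+\Psibold[2\delta',3\delta';\dbold_p](\phicataux-\varphi+(\Mcal\varphi)\vunder,0)$, you rescale to $(\delta')^{-2}g$ so the cutoff has $O(1)$ derivatives, you split $\phicataux-\varphi+(\Mcal\varphi)\vunder$ into exactly the paper's four pieces (catenoid-vs-$\gaux$-graph error, \eqref{Lcatenoid}, the mismatch expansion \ref{Lmismatch}(iii), and $(\Mcal\varphi)(\vunder-1)$) with the cancellation of the $\tau\log(2\dbold_p/\tau)$ terms producing the gain in (iii), and you obtain (ii) and (iv) from quadratic (respectively first-order) dependence of the induced metric and the angle function on $\varphigl$. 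The minor slack in your bookkeeping (e.g.\ the $r^{-2}$ weight in \eqref{Lcatenoid} giving $\tau^{3-2\alpha}$ rather than $\tau^3$, and the term $\psi\,\Delta\phicataux$ in the cutoff region, which is controlled by the same $C^k$ bound on $\phicataux-\phigraph$) does not affect the conclusions.

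One concrete error should be fixed: your justification of (ii) asserts that $\gaux=g$ on $D_p(4\delta')$ because ``$\Omega\equiv 1$ there.'' This misreads \ref{dgaux}: the cutoff $\Psibold[\frac13,\frac23;\dbold_0]\bigl(1,(\dbold_0)^{-1}\bigr)$ equals $1$ near the \emph{origin} and $(\dbold_0)^{-1}$ near the sphere, and $p\in\Sph^2$ has $\dbold_0=1>2/3$, so on the gluing annulus $\gaux=\dbold_0^{-2}g\neq g$ (this is by design, so that $\Sph^2$, $P$, $\Pprime$ are $\gaux$-totally geodesic). The conclusion of (ii) survives, but the correct justification is the one the paper uses: by \eqref{Egauxexp} and \ref{Luauxw}, a $\gaux$-normal graph over $\Dp$ agrees with a Euclidean normal graph up to a sliding field of size $O(\varphigl^2)$ and a change of graph function of size $O(\varphigl^3)$, so $\gcir-\picir^*g$ is still quadratic in $\varphigl$; the same remark applies to (iv), where the boundary-angle identity of \cite[Lemma 5.6]{KapWiygul} is stated for $\gaux$-graphs precisely because $\gaux\neq g$ near $\Sph^2$.
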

\begin{proof}
On $U : = D_p(4\delta')\setminus D_p(\delta')$, we have using \ref{dpreinit}, the smallness of $\delta'$, \ref{dobs}, and \ref{Lbase} that
\begin{align}
\label{Ephigl}
\varphigl = 
\varphi - (\Mcal \varphi) \vunder + \Psibold[2\delta', 3\delta'; \dbold_p](\phicataux - \varphi+ (\Mcal \varphi) \vunder, 0).
\end{align}
 By scaling the ambient metric to $\gtilde : = (\delta')^{-2} g$ and expanding in linear and higher order terms we have
\begin{align*}
(\delta')^2 \picir^{*-1}\Hcir  = \Delta_{\gtilde} \varphigl + \delta' \Qtilde_{(\delta')^{-1} \varphigl}.
\end{align*}
Using \eqref{Ephigl}, we clearly have
\begin{align*}
\|\varphigl \| &\leq C(k) \left( \tau |\log \tau| + \|\phicataux - \varphi+ (\Mcal \varphi) \vunder\| \right),
\end{align*}
where in this proof when we do not specify the norm we mean the $C^{k}_{\sym}(U, (\delta')^{-2} g)$ norm.  For any $k \geq 2$, we have also
\begin{equation*}
\begin{aligned}
\| \Delta_{\gtilde}  \varphigl : C^{k-2}_{\sym}(U, (\delta')^{-2} g) \| 
&\leq C(k) \| \phicataux - \varphi+ (\Mcal \varphi) \vunder \|, \\
\| \delta' \Qtilde_{(\delta')^{-1} \varphigl} C^{k-2}_{\sym}(U, (\delta')^{-2} g)  \| 
&\leq C(k) (\delta')^{-1} \| \varphigl \|^2.
\end{aligned}
\end{equation*}
 We conclude that if $\| \varphigl : C^{k+2}(U, (\delta')^{-2} g) \| \leq \delta'$ (to control the quadratic terms), then we have
\begin{align*}
\| (\delta')^2\picir^{*-1}\Hcir  : C^{k-2}_{\sym}(U, (\delta')^{-2} g)\| 
\leq 
\, C(k)\, 
\left(\, 
 (\delta' )^{-1} \tau^2 |\log \tau|^2
+
 \| \phicataux - \varphi+ (\Mcal \varphi) \vunder\|\right).
\end{align*}
On $U$ we have $\phicataux - \varphi+ (\Mcal \varphi) \vunder = (I) +(II) + (III) + (IV)$, where
\begin{equation}
\begin{gathered}
(I): = \phicataux - \phicat\circ \dbold_p, \quad
(II) : =  \phicat \circ \dbold_p - \tau \log \frac{2\dbold_p}{\tau} , \\
(III):=  \tau \log \frac{2\dbold_p}{\tau} - \varphi + \Mcal \varphi, \quad 
(IV)  :=  (\Mcal \varphi) (\vunder -1).
\end{gathered}
\end{equation}
Using \eqref{Ephicata} to estimate $(I)$, \eqref{Lcatenoid} to estimate $(II)$, \ref{Lmismatch}(iii) to estimate $(III)$, and \eqref{Etau}, \ref{Lpmis}, and \ref{Lvmismatch} to estimate $(IV)$, we have
\begin{equation}
\label{Eii}
\begin{gathered}
 \| (I) \| \leq C(k) \tau^3 |\log \tau|^3, \quad
 \| (II) \| \leq C(k) \tau^{3-2\alpha},\\
 \| (III) \| \leq C(k) \tau^{1+\alpha}|\log \tau|, 
 \quad
 \| (IV) \| \leq C(k) |\Mcal \varphi| \tau^\alpha \leq C(k)\tau^{1+ \alpha}.\\
 \end{gathered}
 \end{equation}
 The proof of item (i) is completed by combining the estimates above.  Next,  Computing the pullback metric $\picir^{*-1} \gcir$ via \ref{Egauxexp} and estimating we find that
 \begin{align*}
 \| \gcir - \picir^* g : C^k_{\sym}(U, (\delta')^{-2}g)\| 
 &\leq C(k) \| \varphigl : C^{k+1}_{\sym}(U, (\delta')^{-2} g)\|^2.
 \end{align*}
Item (ii) then follows from this using (i).
 
 Next, the estimate (iii) follows from the estimates above.  Finally, by using the identity for the angle function from \cite[Lemma 5.6]{KapWiygul}, we have that
 \begin{align*}
\|  \picir^{*-1}\Thetacir : C^k_{\sym}( \partial_{\Sph^2} U, (\delta')^{-2} g)\| 
\leq 
C(k)
\| \varphigl : C^{k+1}_{\sym}(U, (\delta')^{-2} g)\|.
 \end{align*}
Item (iv) follows from this using (i). 
\end{proof}

\begin{lemma}[Estimates on $\Sigmacird$]
\label{Lscird}
For $\Sigmacir$ as defined in \ref{dpreinit}, the following hold.
\begin{enumerate}[label = \emph{(\roman*)} ]
\item $\| \gcir - \picir^* g : C^k_{\sym}(\Sigmacird, \rhocir, \gcir)\| \leq C(k)\tau^2 | \log \tau|^2$.
\item $\| \gcir|_{\partial \Sigmacir} - \picir^* d\theta^2 : C^k_{\sym}( \partial_{\Sph^2} \Sigmacird, \rhocir, \gcir)\|
\leq C(k)\tau^2 | \log \tau|^2$. 
\item $\| \partial_{\sigmacir} + \picir^{-1}_*\partial_{\rr} : C^k_{\sym}(\partial_{\Sph^2} \Sigmacird, \rhocir, \gcir)\| 
\leq C(k) \tau^2 | \log \tau|^2$. 
\item $\| \Acir - \picir^* A_\Sigma : C^k( \Sigmacird, \rhocir, \gcir ) \| \leq C(k)\tau|\log \tau|$.
\item $\| \Hcir - (\Mcal \varphi) w \circ \picir : C^k_{\sym}( \Sigmacird, \rhocir, \gcir, \rhocir^{-2}) \| \leq C(k)\tau^{1+\alpha} | \log \tau|$.
\item $\|\Thetacir : C^k_{\sym}(\partial_{\Sph^2} \Sigmacird, \rhocir, \gcir)\|\leq C(k) \tau | \log \tau |$ and $\|\Thetacir : C^k_{\sym}(\partial_{\Sph^2} \Sigmacird \cap \{\rhocir > 3\delta'\}, \rhocir, \gcir)\|\leq C(k) \tau^3 | \log \tau |^3$.
\end{enumerate}
\end{lemma}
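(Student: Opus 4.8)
The plan is to transfer the estimates of Lemma \ref{Lglureg} and the estimates on $\Sigma$ from Lemma \ref{Lbase} across the map $\picir$ (equivalently $X_\Sigma \circ \picir$), exploiting that on $\Sigmacird$ the pre-initial surface is by construction the $\gaux$-graph over $\Sigma \setminus D_p(\delta')$ of the function $\varphigl$, which away from the gluing annulus $D_p(3\delta') \setminus D_p(2\delta')$ equals $\phigraph = X_\Sigma^{*-1}(\varphi - \vhat - (\Mcal\varphi)\vunder)$. First I would record from Lemma \ref{Lldexist}(iii), \ref{Lvmismatch}(i), \ref{dvhat}, and \ref{Lpmis} the weighted bound $\|\phigraph : C^{k}_{\sym}(\Sigma \setminus D_p(\delta'), \dbold_p, g, \dbold_p|\log\dbold_p|)\| \le C(k)\tau$, which on $\Sigmacird$ translates (after comparing the weight $\dbold_p|\log\dbold_p|$ with $\rhocir|\log\rhocir|$ and absorbing $|\log\delta'|$-type factors using $\delta' = \tau^\alpha$) to $\|\varphigl\| \lesssim \tau|\log\tau|$ at the unit scale $\rhocir$; in the gluing annulus the same bound comes directly from Lemma \ref{Lglureg}(i) after rescaling from $(\delta')^{-2}g$ to $\rhocir^{-2}g$.

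Next, for (i)--(iii) I would compute the induced metric of a $\gaux$-graph using the appendix expansion (\ref{Egauxexp}) of the pullback metric in terms of $\varphigl$ and its first derivatives; since $\picir^* g$ is precisely $X_\Sigma^* g$ pulled back, the difference $\gcir - \picir^* g$ is quadratic in $(\varphigl, d\varphigl)$ up to the curvature/ambient terms of $\gaux$, giving the $\tau^2|\log\tau|^2$ bound in (i). Items (ii) and (iii) are the restrictions of (i) to $\partial_{\Sph^2}\Sigmacird$ together with Lemma \ref{Lbase}(ii)--(iii) comparing $X_\Sigma^* g|_{\partial\Sigma}$ and $X_\Sigma^*\eta_\Sigma$ to $d\theta^2$ and $\partial_\rr$ (recall that on $U_3$, where $X_\Sigma = \id$, there is nothing to compare, and the $\varpi^2$ contributions from $U_2$ are dominated by $\tau^2|\log\tau|^2$ since $\varpi \ll \tau^{1/2}$ by \ref{Etau}); the orthogonality in (iii) also uses that both $\Sigmacir$ and $\Sigma$ meet $\partial\B^3$ orthogonally (Lemma \ref{Lpresym}(ii) and \ref{Lbprop}(iv)), so the conormals agree to the stated order. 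For (iv), the second fundamental form of a graph expands as $A_\Sigma + (\text{linear in } \nabla^2\varphigl) + (\text{h.o.t.})$, so $\Acir - \picir^* A_\Sigma$ is controlled by $\|\varphigl\|_{C^2}$ at scale $\rhocir$, yielding $\tau|\log\tau|$.

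For (v), the key point is that $\Hcir$ is \emph{not} $O(\tau^{1+\alpha}|\log\tau|)$ by itself — the leading error is exactly the term $\what \circ \picir$ coming from the mean curvature of the non-minimal base surface (Lemma \ref{Lbase}(v)), partially cancelled because $\varphigl$ was built subtracting $\vhat$ and $(\Mcal\varphi)\vunder$: expanding $\Hcir$ to first order gives $\Hcir \approx X_\Sigma^{*-1}\what + \Lcal_\Sigma(X_\Sigma^{*-1}(\varphi - \vhat - (\Mcal\varphi)\vunder))$ pulled back by $\picir$, and since $\Lcal_\Sigma X_\Sigma^{*-1} \approx X_\Sigma^{*-1}\Lcal_\D$ by \ref{Lbase}(vi), while $\Lcal_\D\varphi = 0$ (Definition \ref{dLD}(ii)) and $\Lcal_\D\vhat = \what$ (Definition \ref{dvhat}) and $\Lcal_\D\vunder = w$ (Definition \ref{dobs}), the $\what$'s cancel and one is left with $-(\Mcal\varphi)\, w\circ\picir$ plus higher-order and commutator terms; bounding those by the quadratic-in-$\varphigl$ terms, by \ref{Lbase}(vi) (which costs $\varpi^2$, hence negligible), and by Lemma \ref{Lglureg}(iii) on the gluing annulus gives the $\tau^{1+\alpha}|\log\tau|$ error. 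Finally (vi) follows from \ref{Lglureg}(iv) on the gluing annulus combined with Lemma \ref{Lbprop}(iii) (on $U_3^\Sigma \subset \Dp$ the base is a flat equatorial half-disk and $\varphigl$ there equals $\phigraph$, whose angle-function contribution is controlled using the identity of \cite[Lemma 5.6]{KapWiygul} by $\|\varphigl\|_{C^1}$, i.e.\ $\tau|\log\tau|$), while on the region $\{\rhocir > 3\delta'\}$ the half-catenoidal part has terminated and the sharper $\tau^3|\log\tau|^3$ bound comes from the corresponding sharper estimate on $\phigraph$ there — this is the one place where the improved mismatch cancellation via \ref{Lpmis} is exploited. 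I expect the main obstacle to be item (v): organizing the first-order expansion of $\Hcir$ so that the three occurrences of "$\what$-type" terms cancel cleanly across the $\gaux$-graph construction, the base-surface bending, and the commutator $\Lcal_\Sigma X_\Sigma^{*-1} - X_\Sigma^{*-1}\Lcal_\D$, and verifying that every leftover is genuinely of size $\tau^{1+\alpha}|\log\tau|$ or smaller in the weighted norm with weight $\rhocir^{-2}$.
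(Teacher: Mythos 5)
Your route for (i)--(v) and for the first bound in (vi) is essentially the paper's: expand the $\gaux$-graph of $\varphigl$ over $\Sigma$, use \ref{Lglureg} on the gluing annulus and \ref{Lbase}(i)--(iii) to pass between $\Sigma$ and $\Dp$, and for (v) exploit the cancellation in $H_\Sigma+\Lcal_\Sigma\phigraph$ coming from \ref{Lbase}(v)--(vi) together with $\Lcal_\D\varphi=0$, $\Lcal_\D\vhat=\what$, $\Lcal_\D\vunder=w$, with the dominant leftover being the gluing-region error of \ref{Lglureg}(iii). That part is sound (one incidental misstatement: $\Sigmacir$ does \emph{not} meet $\partial\B^3$ orthogonally --- that failure is exactly what $\Thetacir$ measures and why the initial surfaces of Section \ref{Sinit} are needed; \ref{Lpresym}(ii) only gives $\partial\Sigmacir\subset\partial\B^3\cup P\cup\Pprime$. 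This does not hurt (iii), which follows from (ii), but it should not be invoked).

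The genuine gap is the second estimate in (vi). You attribute the $\tau^3|\log\tau|^3$ bound on $\partial_{\Sph^2}\Sigmacird\cap\{\rhocir>3\delta'\}$ to ``a sharper estimate on $\phigraph$ there'' and to the mismatch cancellation of \ref{Lpmis}. Neither mechanism exists: away from the bridge $\phigraph$ is still of size comparable to $\tau\log\dbold_p$, so the angle-function bound you use for the first estimate, $\Thetacir\lesssim\|\varphigl\|_{C^1}$, can never do better than $O(\tau)$ there; and $\Mcal\varphi$ enters $\phigraph$ only as the coefficient of $\vunder$, so the refinement of \ref{Lpmis} is irrelevant to the boundary angle. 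The cubic gain is structural: each of $\varphi$, $\vhat$, $\vunder$ satisfies the homogeneous Robin condition on the spherical boundary (\ref{dLD}(iii), \ref{Lvhat}, \ref{Lvmismatch}(ii)), so $\Bcal_\D\,X_\Sigma^*\phigraph=0$ \emph{exactly} on $\partial_{\Sph^2}\Dp$; by the angle-function identities of \cite[Lemmas 5.6, 5.19]{KapWiygul}, $\Thetacir$ on $\{\rhocir>3\delta'\}$ is controlled by $\Bcal_\Sigma\phigraph$, whose only surviving contribution is the conormal discrepancy $\partial_{\rr}-X_\Sigma^*\eta_\Sigma=O(\varpi^2)$ of \ref{Lbase}(iii) multiplied by $\|X_\Sigma^*\phigraph\|_{C^{k+1}}=O(\tau|\log\tau|)$, which is what yields $O(\tau^3|\log\tau|^3)$. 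Without observing the exact vanishing of the Robin boundary operator on $X_\Sigma^*\phigraph$, this part of the lemma cannot be reached by your argument.
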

\begin{proof}

On the gluing region, the estimate in (i) follows from \ref{Lglureg}(ii).  On the other hand, by an  argument analogous to the one in the proof of \ref{Lglureg}(ii), it follows that
\begin{align*}
\| \gcir - \picir^*_\Sigma g : C^k(\Sigmacird, \rhocir, \gcir) \|
\leq C(k) \tau^2 |\log \tau|^2,
\end{align*}
where the estimate follows by estimating $\phigraph$ as defined in \ref{dpreinit} using \ref{Lmismatch} and \eqref{Etau}.  The estimate in (i) now follows by combining the preceding with \ref{Lbase}(i).  Item (ii) follows immediately from item (i), and item (iii) follows from item (ii). 

We now estimate $\Acir$.  By straightforward estimates using the definitions, we have that
\begin{align*}
\| \Acir - \picir^* A_\Sigma : C^k(\Mcird, \rhocir, \gcir)\| &\leq C(k) \| \varphigl : C^{k+2}(\picir(\Mcird), \rhocir, \gcir)\|\\
&\leq C(k) \tau |\log \tau |.
\end{align*}
This completes the proof of (iv).

We now estimate the mean curvature.  First note that on $\picir^{-1}(D_p(3\delta') \setminus D_p(2\delta'))$, the estimate in (v) follows from \ref{Lglureg}(iii).  We next estimate $\Hcir$ on $\picir^{-1}_{\Sigma}(U^\Sigma_2)$. 
Expanding in linear and higher order terms, we have
\begin{align*}
\picir_\Sigma^{*-1} \Hcir = H_\Sigma + \Lcal_\Sigma \phigraph + Q_{\phigraph}, 
\quad
\text{where}
\quad
\| Q_{\phigraph} : C^k_{\sym}(U^\Sigma_2, g) \|
 \leq C(k) \| \phigraph : C^{k+2}_{\sym}(U^\Sigma_2, g)\|^2. 
\end{align*}

By combining the definition of $\phigraph$ in \ref{dpreinit} with \ref{dLD}, \ref{dobs}, and \ref{dvhat}, it follows that $\Lcal_{\D} X^*_\Sigma = - \what$ on $U_2$.  By adding and subtracting $X^{*-1}_{\Sigma} \what$ to $H_\Sigma + \Lcal_\Sigma \phigraph$ and 
using \ref{Lbase}(v)-(vi) to estimate, we conclude that
\begin{align*}
\| H_\Sigma + \Lcal_\Sigma \phigraph : C^k_{\sym}(U^\Sigma_2, g)\|
\leq 
C(k)
\varpi^2.
\end{align*}
By combining with the estimate on the quadratic terms above, we have that
\begin{align*}
\| \picir^{*-1}_\Sigma \Hcir : C^k_{\sym}(U^\Sigma_2, g)\| \leq C(k) \varpi^2 \leq C(k) \tau^2 | \log \tau|^2.
\end{align*}
This proves (v) on $\picir^{-1}(U^\Sigma_2)$.  The proof of (v) on the rest of $\Sigmacird$ is very similar to, but easier than the proof of the estimate on $\picir^{-1}(U_2)$ just completed, so we omit the details.

Finally, we estimate the boundary angle.  Because of \ref{Lcatest}(viii) and \ref{Lglureg}(iv), it  suffices to estimate $\Thetacir$ on $\partial_{\Sph^2} \Ucir$, where $\Ucir := \picir^{-1}(\Dp \setminus D_0(3\delta'))$.  For convenience, we also denote $U_\Sigma : = \pi_\Sigma(\Ucir)$.  By combining the formula for the boundary angle in \cite[Lemma 5.6]{KapWiygul} with \cite[Lemma 5.19]{KapWiygul}, we find that
\begin{align*}
\| \Thetacir : C^k_{\sym}( \partial_{\Sph^2} \Ucir, \rhocir, \gcir)\|
\leq
C(k)
\|\Bcal_{\Sigma}\phigraph|_{\partial_{\Sph^2} \Sigma}  : C^k_{\sym}( \partial_{\Sph^2} U_\Sigma, \rhocir, \gcir) \|.
\end{align*}
On the other hand, combining the definition of $\phigraph$ in \ref{dpreinit} with \ref{dLD}(iv), \ref{Lvmismatch}(ii), and \ref{Lvhat} establishes that
\begin{align*}
\Bcal_{\D} X^*_\Sigma \phigraph|_{\partial_{\Sph^2} \picir(\Ucir)} = 0.
\end{align*}
From this and the definition of $\phigraph$ in \ref{dpreinit}, it follows that
\begin{align*}
\| \Bcal_{\Sigma}\phigraph|_{\partial_{\Sph^2} \Sigma}  : C^k_{\sym}(\partial_{\Sph^2} U_\Sigma, \rhocir, \gcir) \|
&\leq 
C(k) 
\| \partial_{\rr} - X^*_{\Sigma} \eta_\Sigma : C^k_{\sym}( \partial_{\Sph^2} U_\Sigma, \rhocir, \gcir)\| 
\| X^*_\Sigma \phigraph : C^{k+1}_{\sym}(\picir(\Ucir),   \rhocir, \gcir )\|
\\
&\leq 
C(k) \tau^3|\log \tau|^3,
\end{align*}
where the last estimate uses \ref{Lbase}(iii).  The proof of (vi) is now completed by combining the preceding estimates. 
\end{proof}

\subsection*{Estimates on $\Sigmacir$}
$\phantom{ab}$
\nopagebreak

\begin{lemma}[Estimates on $\Sigmacir$]
\label{Lmcirest} 
Let $\Sigmacir$ be as in \ref{dpreinit}.  There exists $\epsilon>0$ such that the following hold. 
\begin{enumerate}[label = \emph{(\roman*)} ]
\item $(\scir, \sigmacir)$ is a $C^k$ coordinate system on $\{ \sigmacir < \epsilon \rhocir\}$. 
\item $\| d\scir : C^k( \{ \sigmacir < \epsilon \rhocir \}, \rhocir, \gcir, \rhocir)\| 
+
\| \sigmacir : C^k(  \{ \sigmacir < \epsilon \rhocir \}, \rhocir, \gcir, \rhocir)\| \leq C(k).$
\item $\| \Acir : C^k_{\sym}( \Sigmacir, \rhocir, \gcir ) \| \leq C(k)\tau | \log \tau|$.
\item  $\| \Hcir - (\Mcal \varphi) w \circ \picir : C^k_{\sym}( \Sigmacir, \rhocir, \gcir, \rhocir^{-2}) \| \leq C(k)\tau^{1+\alpha} | \log \tau|$.
\item $\|\Thetacir : C^k_{\sym}(\partial_{\Sph^2} \Sigmacir, \rhocir, \gcir)\|\leq C(k) \tau | \log \tau |$ and $\|\Thetacir : C^k_{\sym}(\partial_{\Sph^2} (\Sigmacir \cap \{\rhocir > 3 \delta'\}), \rhocir, \gcir)\|\leq C(k) \tau | \log \tau |$.
\end{enumerate}
\end{lemma}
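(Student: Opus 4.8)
The plan is to derive every assertion by localizing and then patching the estimates already established on the catenoidal region $\Sigmacircat$ (Lemma~\ref{Lcatest}) and on the graph region $\Sigmacird$ (Lemma~\ref{Lscird}), using that $\Sigmacir=\Sigmacircat\cup\Sigmacird$ and that the two regions overlap on the neck $D^{\R^3}_p(\delta')\setminus D^{\R^3}_p(\delta'/8)$. The observation that makes the patching clean is that $\rhocir\le\dbold_p$ on $\Sigmacir$, since $\rhocir=\dbold_p\circ\Pi^{\R^3,g}_P$ and orthogonal projection onto $P\ni p$ is $1$-Lipschitz towards $p$; hence a geodesic ball of radius $\tfrac{1}{100}$ in the metric $\rhocir^{-2}(x)\,g$ about any $x\in\Sigmacir$ has $g$-radius at most $\dbold_p(x)/100$, so for $\tau$ small it is contained entirely in $\Sigmacircat$ or entirely in $\Sigmacird$, and a weighted H\"older norm over $\Sigmacir$ is therefore bounded by the corresponding norms over $\Sigmacircat$ and over $\Sigmacird$. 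Two elementary facts will be used repeatedly: $\varpi\le C\tau|\log\tau|$ by \eqref{Etau}; and $w=\Lcal_{\D}\vunder$ vanishes on a fixed neighborhood of $p$ in $\Dp$ (there $\vunder=v$ and $\Lcal_{\D}v=0$), so that $(\Mcal\varphi)\,w\circ\picir\equiv0$ on $\picir^{-1}(D^{\R^3}_p(4\delta'))$ and extends smoothly by $0$ over $\Sigmacir\cap D^{\R^3}_p(\delta'/8)$, which is what makes the expression in item (iv) meaningful on all of $\Sigmacir$.

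Granting this, items (iii)--(v) are immediate. For (iii): on $\Sigmacircat$ one has $\|\Acir:C^k(\Sigmacircat,\rhocir,\gcir)\|\le C(k)\tau$ by \ref{Lcatest}(iv), and on $\Sigmacird$ one writes $\Acir=(\Acir-\picir^*A_\Sigma)+\picir^*A_\Sigma$, estimating the first summand by \ref{Lscird}(iv) and the second by \ref{Lbase}(iv) together with \ref{Lscird}(i) and the fact that $A_\Sigma$ is supported where $\rhocir\Sim_{C}1$, which gives $\|\Acir:C^k(\Sigmacird,\rhocir,\gcir)\|\le C(k)(\tau|\log\tau|+\varpi)\le C(k)\tau|\log\tau|$. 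For (iv): on $\Sigmacircat$, $\Hcir=0$ by \ref{Lcatest}(iii) and the correction term vanishes by the first paragraph, while on $\Sigmacird$ the estimate is precisely \ref{Lscird}(v). For (v): the bound on $\partial_{\Sph^2}\Sigmacircat$ is \ref{Lcatest}(viii) and the bounds on $\partial_{\Sph^2}\Sigmacird$ are \ref{Lscird}(vi), and since $\{\rhocir>3\delta'\}\subset\{\dbold_p>3\delta'\}\subset\Sigmacird$ the second (sharper) estimate of \ref{Lscird}(vi) in particular yields the weaker bound claimed on $\partial_{\Sph^2}(\Sigmacir\cap\{\rhocir>3\delta'\})$.

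The substantive point is items (i)--(ii). The plan is to show that after rescaling near a boundary point $x$ by $\rhocir^{-1}(x)$ the pair $(\Sigmacir,\partial_{\Sph^2}\Sigmacir)$ has uniformly bounded geometry in a collar of fixed size, so that the nearest-point projection onto $\partial_{\Sph^2}\Sigmacir$ is well defined there with $\tau$-independent constants. In the catenoidal region this uses \ref{Lcatest}(i),(vi),(vii): in the $(t,\vartheta)$ parametrization $\kappacir$ the rescaled metric $\rhocir^{-2}\gcir$ is $C^k$-close to that of a fixed piece of the unit catenoid, the boundary $\{|\vartheta|=\pi/2\}$ has $\rhocir^{-2}\gcir$-arclength element $\Sim_{C}dt$, and the inward $\gcir$-unit conormal $\partial_{\sigmacir}$ agrees with $-(\sgn\vartheta)\rhocir^{-1}\kappacir_*\partial_\vartheta$ up to a $C^k$-small error. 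In the graph region, \ref{Lscird}(i)--(iii) (with \ref{Lbprop}(iv) and \ref{Lbase}(ii)--(iii)) show that $\gcir$ is $C^k$-close to the product-type metric $\picir^*g$, whose boundary collar is the $\picir$-pullback of the standard collar of $\partial_{\Sph^2}\Dp$ in polar coordinates $(\rr,\theta)$ and hence manifestly of bounded geometry (here $\rhocir\Sim_{C}1$). In either case one obtains $\epsilon>0$ independent of $\tau$ such that $(\scir,\sigmacir)$ is a $C^k$ coordinate system on $\{\sigmacir<\epsilon\rhocir\}$, which is (i); and $\scir$ (the rescaled boundary arclength of the foot point) and $\sigmacir$ (the rescaled distance to the boundary) have, in the rescaled metric, all derivatives bounded by $C(k)$, which after undoing the rescaling is precisely (ii).

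The main obstacle I anticipate is the uniformity of $\epsilon$ and of the coordinate bounds as $\tau\to0$, concentrated in the catenoidal region: there $\rhocir=\tau\cosh t$ ranges between $\tau$ and a quantity comparable to $\delta'$, so the natural scale genuinely varies along $\Sigmacircat$ and one cannot work at a single fixed scale; everything must be phrased through the $\rhocir$-rescaled scale-invariant estimates of \ref{Lcatest}, after which the bounded geometry of the unit-waist catenoid makes the collar width and all derivative bounds uniform. Reconciling the weight functions on the overlap neck and checking that the constants coming from \ref{Lcatest} and \ref{Lscird} fit together is then routine.
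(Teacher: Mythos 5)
Your proposal is correct and follows essentially the same route as the paper: items (iii)--(v) are obtained exactly as there by combining \ref{Lbase}(iv), \ref{Lcatest}(iii),(iv),(viii) and \ref{Lscird}(iv)--(vi), and for items (i)--(ii) the paper simply defers to the proof of \cite[Prop.~3.29(xv)-(xvi)]{KapWiygul}, which is the same rescaled bounded-geometry collar argument you sketch. One minor inaccuracy: on the graph region $\rhocir$ is not comparable to $1$ near its inner edge (it decreases to about $\delta'/8$ there), but since the estimates of \ref{Lscird}(i)--(iii) are already scale-invariant in $\rhocir$, the same rescaling argument you use on the catenoidal part applies and your conclusion stands.
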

\begin{proof}
 Items (i) and (ii) follow as in the proof of \cite[Prop. 3.29(xv)-(xvi)]{KapWiygul}, so we omit the details.  Item (iii) follows from combining \ref{Lbase}(iv), \ref{Lcatest}(iv) and \ref{Lscird}(iv).  Item (iv) follows by combining \ref{Lcatest}(iii) and \ref{Lscird}(v).  Finally, item (v) follows from \ref{Lcatest}(viii) and \ref{Lscird}(vi).   
\end{proof}

\section{The initial surfaces}
\label{Sinit}
\nopagebreak

We now bend $\Mcir$ near the spherical part $\partial_{\Sph^2} \Mcir$ of its boundary to make it intersect the sphere $\Sph^2$ orthogonally.  We do this by picking a small function on $\Mcir$ whose graph has the desired property. 

\begin{definition}[Initial surfaces]
\label{dinit}
We define the initial surface $M = M[\zeta]$ to be the image of the deformation  $\Xcir_{\ucir}[\zeta] : \Mcir[\zeta] \rightarrow \R^3$ defined by
\begin{align}
\label{EMinit}
\Xcir_{\ucir} = \Xcir(p) + \ucir (p) \nucir (p),
\end{align}
where $\ucir \in C^\infty_{\sym}(\Mcir)$ is defined by
\begin{align}
\label{Eucir}
\ucir ( \scir, \sigmacir) = - \sigmacir  \frac{\Thetacir(\scir)}{\sqrt{1- \Thetacir^2(\scir)}} \Psibold\left[\frac{\epsilon}{4}, \frac{\epsilon}{2}; \rhocir^{-1} \sigmacir\right](1, 0),
\end{align}
and $\epsilon$ is as in \ref{Lmcirest}, so that the coordinates $(\scir, \sigmacir)$ are well-defined and smooth on the support of the cutoff function above and we understand that $\ucir$ identically vanishes elsewhere. 
\end{definition}

We write $X : M \rightarrow \R^3$ for the inclusion map of $M$ in $\R^3$, $g$ for the induced Euclidean metric, $\nu : M\rightarrow \R^3$ for the upward pointing unit normal on $M$, $A$ for the second fundamental form, and $H$ for the mean curvature.  We also define 
\begin{equation}
\begin{gathered}
\rho : = \Xcir_{\ucir}^{-1*} \rhocir, 
\quad
\Sigmacat = \Xcir_{\ucir} (\Sigmacircat), 
\quad
\Sigmad = \Xcir_{\ucir} (\Mcird), 
\quad
\pi : = \picir \circ \Xcir_{\ucir}^{-1}.
\end{gathered}
\end{equation}

\begin{lemma}[Properties of the initial surfaces]
\label{Lpm}
For $M$ as defined in \ref{dinit}, the following properties hold.
\begin{enumerate}[label = \emph{(\roman*)} ]
\item $M$ is a smooth, $\group$-invariant embedded disk in $\B^3$. 
\item $\partial M \subset \partial \B^3 \cup P \cup \Pprime$. 
\item $M\cup \Rcapunder_P M$ and $M \cup \Rcapunder_{\Pprime} M$ are smooth disks. 
\item $M$ meets $\partial \B^3$ orthogonally along $\partial_{\Sph^2} M$.
\end{enumerate}
\end{lemma}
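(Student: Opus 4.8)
The plan is to derive all four properties from the corresponding properties of the pre-initial surface $\Sigmacir$ recorded in \ref{Lpresym}, together with one local computation along $\partial_{\Sph^2}M$ that is exactly the reason for the particular form of $\ucir$ in \eqref{Eucir}. The basic observation is that $\Xcir_{\ucir}=\Xcir+\ucir\nucir$ is a $\group$-equivariant, $C^1$-small perturbation of the inclusion $\Xcir\colon\Sigmacir\hookrightarrow\R^3$: equivariance under $\Rcapunder_{\ell^\perp}$ follows from $\ucir\in C^\infty_{\sym}(\Sigmacir)$ together with the $\group$-equivariance of $\nucir$ (since $\Rcapunder_{\ell^\perp}$ preserves the upward direction), while $C^1$-smallness follows from \eqref{Eucir}, \ref{Lmcirest}(i)--(ii), and the bound $\|\Thetacir\|\leq C\tau|\log\tau|$ of \ref{Lmcirest}(v) (the factor $\sigmacir$, which is at most $\tfrac\epsilon2\rhocir$ on the support of the cutoff, supplies the correct weight). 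Since $\Sigmacir$ is by \ref{Lpresym}(i) a compact $\group$-invariant embedded disk, $\Xcir_{\ucir}$ is an embedding for $\tau$ small, so $M$ is a smooth $\group$-invariant embedded disk; this proves (i) except for the inclusion $M\subset\B^3$, to which I return below. For (ii), recall $\partial M=\Xcir_{\ucir}(\partial\Sigmacir)$ and $\partial\Sigmacir\subset\partial\B^3\cup P\cup\Pprime$ by \ref{Lpresym}(ii); on $\partial_{\Sph^2}\Sigmacir$ the function $\ucir$ vanishes (the $\sigmacir$ factor), so $\partial_{\Sph^2}M=\partial_{\Sph^2}\Sigmacir\subset\partial\B^3$, while along $\Sigmacir\cap P$ the smoothness of $\Sigmacir\cup\Rcapunder_P\Sigmacir$ from \ref{Lpresym}(iii) forces $\Sigmacir$ to meet $P$ orthogonally, hence $\nucir$ is tangent to $P$ there and $\Xcir_{\ucir}(\Sigmacir\cap P)\subset P$; the same argument with $\Pprime$ gives $\Xcir_{\ucir}(\Sigmacir\cap\Pprime)\subset\Pprime$, proving (ii).

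Next I would prove (iv). Fix $q\in\partial_{\Sph^2}M=\partial_{\Sph^2}\Sigmacir$ and work in the orthonormal frame $\{e_1,e_2,e_3\}:=\{\Xcir_*\partial_{\scir},\Xcir_*\partial_{\sigmacir},\nucir\}$ along $\partial_{\Sph^2}\Sigmacir$. Since $\ucir$ vanishes at $\sigmacir=0$ and $\partial_{\sigmacir}\ucir=-\Thetacir/\sqrt{1-\Thetacir^2}$ there (the cutoff in \eqref{Eucir} equals $1$ when $\sigmacir=0$), the tangent plane $T_qM$ is spanned by $e_1$ and $e_2-\tfrac{\Thetacir}{\sqrt{1-\Thetacir^2}}e_3$, so a unit normal $\nu(q)$ to $M$ at $q$ is proportional to $\Thetacir\,e_2+\sqrt{1-\Thetacir^2}\,e_3$. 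On the other hand $\Xcir(q)$ is a unit vector orthogonal to $e_1$ with $\langle\Xcir(q),e_3\rangle=\Thetacir(q)$ by \eqref{Etheta} and $\langle\Xcir(q),e_2\rangle=-\sqrt{1-\Thetacir^2}$ (negative since $\partial_{\sigmacir}$ points into $\B^3$), i.e.\ $\Xcir(q)=-\sqrt{1-\Thetacir^2}\,e_2+\Thetacir\,e_3$; hence $\langle\Xcir_{\ucir}(q),\nu(q)\rangle=\langle\Xcir(q),\nu(q)\rangle=0$, which is (iv). Finishing (i) is now routine: along $\partial_{\Sph^2}\Sigmacir$ one computes $\partial_{\sigmacir}|\Xcir_{\ucir}|^2=-2/\sqrt{1-\Thetacir^2}<0$, and the smallness of $\ucir$ keeps this quantity negative throughout the thin collar $\{\sigmacir\leq\tfrac\epsilon2\rhocir\}$ supporting $\ucir$; so $|\Xcir_{\ucir}|^2\leq1$ there, with equality only on $\partial_{\Sph^2}\Sigmacir$, while off the collar $M$ coincides with $\Sigmacir\subset\B^3$.

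It remains to prove (iii), which I expect to be the only genuinely non-routine step. Because $\supp\ucir$ is a collar of $\partial_{\Sph^2}\Sigmacir$ of width $\sim\epsilon\rhocir$, away from the four corners of $\Sigmacir$ the deformation $\ucir$ vanishes near $\Sigmacir\cap P$ and near $\Sigmacir\cap\Pprime$, so there $M$ coincides with $\Sigmacir$ and the smoothness of $M\cup\Rcapunder_P M$ and $M\cup\Rcapunder_{\Pprime}M$ is exactly \ref{Lpresym}(iii). Near a corner, say one where $\partial_{\Sph^2}\Sigmacir$ meets $\Sigmacir\cap P$, $\ucir$ is not identically zero and one must check that the reflection across $P$ still glues smoothly. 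For this I would observe that on the smooth disk $\widehat\Sigmacir:=\Sigmacir\cup\Rcapunder_P\Sigmacir$ the geometric data $\Thetacir$, $\rhocir$, $\sigmacir$ entering \eqref{Eucir} extend to $\Rcapunder_P$-invariant smooth functions $\widehat\Theta$, $\widehat\rho$, $\widehat\sigma$ — the one point needing care being that $\sigmacir$, the distance in $\Sigmacir$ to $\partial_{\Sph^2}\Sigmacir$, coincides on $\supp\ucir$ with the distance $\widehat\sigma$ in $\widehat\Sigmacir$ to $\partial_{\Sph^2}\widehat\Sigmacir$, because a shortest path from a collar point to $\partial_{\Sph^2}\widehat\Sigmacir$ never crosses the now-interior curve $\Sigmacir\cap P$. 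Feeding these into the formula \eqref{Eucir} produces an $\Rcapunder_P$-invariant $\widehat u\in C^\infty(\widehat\Sigmacir)$ with $\widehat u|_{\Sigmacir}=\ucir$, whence $M\cup\Rcapunder_P M=\Xcir_{\widehat u}(\widehat\Sigmacir)$, a smooth embedded disk by the same small-perturbation argument used for (i); the case of $\Pprime$ is identical. Reconciling in this way the $(\scir,\sigmacir)$-based definition of $\ucir$ with the reflection symmetries across $P$ and $\Pprime$ is, as indicated, the main obstacle, the remaining steps reducing mechanically to \ref{Lpresym} and the computation in (iv).
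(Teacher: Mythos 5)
Your proposal is correct and follows essentially the same route as the paper, which deduces (i)--(iii) directly from \ref{dinit} and \ref{Lpresym} and defers (iv) to the analogous computation in \cite{KapWiygul}; your frame computation at $\partial_{\Sph^2}M$ is precisely that omitted argument (it is the raison d'\^etre of the formula \eqref{Eucir}), and your verification that $\partial_\sigma|\Xcir_{\ucir}|^2<0$ on the collar and your reflection-invariant extension of $\Thetacir$, $\rhocir$, $\sigmacir$ across $P$ and $\Pprime$ supply details the paper leaves implicit. The only glib spot is the claim that smoothness of $\Sigmacir\cup\Rcapunder_P\Sigmacir$ ``forces'' orthogonality to $P$ (a priori tangency is also compatible with a smooth reflection-invariant doubling), but in the present setting $\nucir$ is indeed tangent to $P$ along the catenoid waist and to $\Pprime$ along the edge over $\ell$, as noted in the proof of \ref{Lpresym}(ii), so the argument stands.
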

\begin{proof}
Items (i)-(ii) follow immediately from \ref{dinit} and corresponding properties for $\Sigmacir$ from \ref{Lpresym}.  Finally, the proof that the initial surface $M$ meets $\partial \B^3$ orthogonally is identical to the proof of the analogous fact in \cite{KapWiygul}, so we omit the details. 
\end{proof}

\begin{lemma}[Estimates on $M$]
\label{Pinit}
The following estimates hold. 
\begin{enumerate}[label = \emph{(\roman*)} ]
\item $\| \ucir : C^k(\Sigmacir, \rhocir, \gcir, \rhocir)\| \leq C(k) \tau |\log \tau|$ and $\| \ucir : C^k(\Sigmacir\cap \{\rhocir > 3\delta'\}, \rhocir, \gcir, \rhocir)\| \leq C(k) \tau^3 |\log \tau|^3$.
\item $\| \kappa^* g - \kappahat^* \gcir : C^k(\cyla^+, \rho, \kappacir^*\gcir, \rhocir \tau^2 | \log \tau |^2 + \rhocir^4)\| \leq C(k)$. 
\item $\| g - \pi^* g : C^k(\Sigmad, \rho, g)\| \leq C(k) \tau^2 | \log \tau|^2.$
\item $\|A : C^k(M, \rho, g)\| \leq C(k)\tau |\log \tau|$ and $\| \rho^2 |A|^2|_{\Sigmacat} - 2 \tau^2\rho^{-2} : C^k(\Sigmacat, \rho, g) \| \leq C(k)\tau | \log \tau|$.
\item $\| H - (\Mcal \varphi) w \circ \pi : C^k(M, \rho, g, \rho^{-2} )\| \leq C(k)\tau^{1+\alpha} |\log \tau|$. 
\item $\| \rho \partial_\sigma + (\sgn \vartheta) \kappa_* \partial_{\vartheta} : C^k(\partial_{\Sph^2} \Sigmacat, \rho, g, \rho^2 + \tau| \log \tau|^2)\| \leq C(k)$.
\item $\| \partial_{\sigma} + \pi^{-1}_* \partial_{\rr} :C^k(\Sigmad, \rho, g)\| \leq C(k) \tau^2 |\log \tau|^2$.
\end{enumerate}
\end{lemma}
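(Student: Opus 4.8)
The plan is to realize $M$ as a normal graph over $\Sigmacir$ with graphing function $\ucir$, and to reduce each item to the corresponding estimate on $\Sigmacir$ from \ref{Lmcirest} (or, for the sharper statements on $\Sigmacat$, from \ref{Lcatest} and \ref{Lscird}) combined with control on $\ucir$. I would therefore establish item (i) first, then (iii) and (vii), then (ii), (iv) and (vi), and finally (v). For item (i): on the support of the cutoff in \eqref{Eucir} the coordinates $(\scir,\sigmacir)$ form a good $C^k$ system by \ref{Lmcirest}(i)--(ii), and there $\sigmacir<\tfrac{\epsilon}{2}\rhocir$, so $\sigmacir$ has $C^k(\,\cdot\,,\rhocir,\gcir,\rhocir)$ norm bounded by a constant. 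Pulling $\Thetacir$ back along the nearest-point projection $\scir$ (again controlled by \ref{Lmcirest}(i)--(ii)), using the smallness of $\Thetacir$ from \ref{Lmcirest}(v) to bound $1/\sqrt{1-\Thetacir^2}$ together with its derivatives, and invoking the multiplicative property \eqref{Enormmult}, one gets $\|\ucir : C^k(\Sigmacir,\rhocir,\gcir,\rhocir)\|\le C(k)\,\|\Thetacir : C^k_{\sym}(\partial_{\Sph^2}\Sigmacir,\rhocir,\gcir)\|$; the two bounds in (i) then follow from the two bounds for $\Thetacir$ in \ref{Lmcirest}(v), the refined one on $\{\rhocir>3\delta'\}$ giving the refined bound on $\ucir$. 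This is the analogue of the estimate of the correction function in \cite{KapWiygul}.

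Items (ii)--(iv) and (vi)--(vii) are then ``graph perturbation'' estimates: since $\Xcir_{\ucir}=\Xcir+\ucir\nucir$ with $\ucir=O(\tau|\log\tau|\,\rhocir)$ by (i) and $\Acir=O(\tau|\log\tau|)$ in the relevant scaled norms by \ref{Lmcirest}(iii), the standard Euclidean expansions of the induced metric, second fundamental form, unit conormal, boundary metric and mean curvature of a normal graph into the unperturbed quantities plus linear-and-higher-order terms in $(\ucir,\nabla\ucir,\nabla^2\ucir)$ reduce each estimate to combining a pre-initial estimate with (i). Concretely, (iii) follows from \ref{Lscird}(i), the $\ucir$-contribution on $\Sigmad$ being $O(\tau^3|\log\tau|^3)$ by the refined half of (i) hence absorbed; (vii) follows similarly from \ref{Lscird}(iii); (ii), with $\kappa:=\Xcir_{\ucir}\circ\kappacir$, follows from \ref{Lcatest}(i), the weight $\rhocir\tau^2|\log\tau|^2+\rhocir^4$ being the sum of the $\ucir\Acir$-type contribution near $\partial_{\Sph^2}\Sigmacat$ and the original $\rhocir^4$ error; (iv) follows from \ref{Lmcirest}(iii) for the first bound, while the refined statement $\|\rho^2|A|^2|_{\Sigmacat}-2\tau^2\rho^{-2}:C^k(\Sigmacat,\rho,g)\|\le C(k)\tau|\log\tau|$ comes from \ref{Lcatest}(ii) (which identifies $\kappacir^*\Acir$ with $\tau(dt^2-d\vartheta^2)$ up to lower order in the metric $\kappahat^*\gcir$, for which $\rhocir^2|\Acir|^2=2\tau^2\rhocir^{-2}$ exactly) together with the lower-order $\ucir$-perturbation; (vi) follows from \ref{Lcatest}(vii) plus the conormal correction $\partial_{\sigmacir}\ucir$, whose size accounts for the $\tau|\log\tau|^2$ summand in the weight. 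In each case one first checks the a priori smallness of $\|\ucir\|$ (immediate from (i)) that justifies discarding the quadratic remainders.

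The step I expect to be the main obstacle is item (v), which is exactly the point of the construction in \ref{dinit}: the deformation $\ucir$ is chosen to make $M$ meet $\Sph^2$ orthogonally (see \ref{Lpm}(iv)) at the price of introducing interior mean curvature, and one must show this new mean curvature is no larger than the error already present on $\Sigmacir$. Writing $H=\Hcir+\Lcal_{\Sigmacir}\ucir+Q_{\ucir}$ with $\|Q_{\ucir}\|\lesssim\|\ucir\|^2$ in the weighted norm, the claim is that $\Lcal_{\Sigmacir}\ucir+Q_{\ucir}$ is $O(\tau^{1+\alpha}|\log\tau|)$ in the $\rho^{-2}$-weighted norm, so it is absorbed into the estimate of $\Hcir-(\Mcal\varphi)\,w\circ\picir$ from \ref{Lmcirest}(iv). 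On $\Sigmad$ this is immediate from the refined half of (i), with room to spare. On $\Sigmacat$, where $\Hcir=0$, the key observation is that $\ucir$ is supported within distance $\tfrac{\epsilon}{2}\rhocir$ of $\partial_{\Sph^2}\Sigmacat$, where $\rhocir\le\delta'=\tau^\alpha$; combined with $\ucir=O(\tau|\log\tau|\,\rhocir)$ this yields $\Lcal_{\Sigmacir}\ucir=O(\tau|\log\tau|\,\rhocir^{-1})=O(\tau|\log\tau|\,\rhocir\cdot\rhocir^{-2})$, which is the desired $O(\tau^{1+\alpha}|\log\tau|\,\rho^{-2})$ bound because $\rhocir\le\tau^\alpha$ there, while the term $|\Acir|^2\ucir$ is handled using \ref{Lcatest}(iv) and $\rhocir\ge c\tau$ on $\Sigmacat$. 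This ``convert boundary error into interior mean curvature'' device is precisely the one used in \cite{KapWiygul}, and the delicate part is arranging the weights so that the worse bound for $\Thetacir$ near the bridge is offset by the smallness of $\rhocir$ there.
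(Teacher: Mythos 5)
Your proposal is correct and follows essentially the same route as the paper: establish the bound on $\ucir$ from \eqref{Eucir} via the $\Thetacir$ estimates in \ref{Lmcirest}, then treat $M$ as a normal perturbation of $\Sigmacir$ and transfer the pre-initial estimates of \ref{Lcatest}, \ref{Lscird}, \ref{Lmcirest} using graph-perturbation bounds of the type in \cite[Prop.~4.7]{KapWiygul}, with item (v) obtained by trading the weight $\rhocir^{-1}$ for $\rhocir^{-2}$ on the region $\rhocir\lesssim\delta'=\tau^\alpha$ and using the refined $\tau^3|\log\tau|^3$ bound on $\ucir$ where $\rhocir>3\delta'$. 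The minor differences (deriving (vi)–(vii) from \ref{Lcatest}(vii)/\ref{Lscird}(iii) directly rather than from (ii)–(iii), and writing out the expansion $H=\Hcir+\Lcal_{\Sigmacir}\ucir+Q_{\ucir}$ explicitly) are immaterial.
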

\begin{proof}

This proof is very similar to the proof of \cite[Prop. 4.7]{KapWiygul}.  Item (i) follows from estimating \eqref{Eucir} using \ref{Lmcirest}.

It follows by arguing as in the proof of \cite[Prop. 4.7]{KapWiygul} that 
\begin{align*}
\| \Xcir^*_{\ucir}g - \gcir : C^k(\Mcir, \rhocir, \gcir, \rhocir)\| 
&\leq
C(k) \| \ucir \Acir : C^k(\Mcir, \rhocir, \gcir, \rhocir)\|\\
&\leq C(k) \tau^2 |\log \tau|^2,
\end{align*}
where the second inequality uses the estimate on $\ucir$ in (i) and the bound on $\Acir$ in \ref{Lmcirest}(iii).  
In combination with \ref{Lcatest}(i), this proves item (ii).  In combination with \eqref{Enormmult} and  \ref{Lscird}(ii), the previous estimate implies (iii).  Item (vi) then follows immediately from (ii), and (vii) follows from (iii).

It also follows by arguing as in the proof of  \cite[Prop. 4.7]{KapWiygul} that
\begin{equation}
\label{EAdiff}
\begin{aligned}
\| \Xcir^*_{\ucir} A - \Acir : C^k(U, \rhocir, \gcir, \rhocir)\| 
&\leq 
C(k) \| \ucir : C^{k+2}(U, \rhocir, \gcir, \rhocir) \|
\end{aligned}
\end{equation}
where $U$ is either $\Sigmacir$ or $\Sigmacir \cap \{\rhocir >3 \delta'\}$.   By using this in conjunction with \ref{Lmcirest}(iii), (i)-(ii) above, the estimates in item (iv) follow. 

It follows from \eqref{EAdiff} that
\begin{align}
\| \Xcir^*_{\ucir} H - \Hcir : C^k(\Sigmacir, \rhocir, \gcir, \rhocir^{-1})\| 
\leq
C(k) \| \ucir : C^{k+2}(U, \rhocir, \gcir, \rhocir) \|, 
\end{align}
where as before, $U$ is either $\Sigmacir$ or $\Sigmacir \cap \{ \rhocir > 3\delta'\}$. 
Item (v) follows from this, item (i) above, and \ref{Lmcirest}(iv).
\end{proof}

\subsection*{Graphs over the initial surfaces}
$\phantom{ab}$
\nopagebreak

If $\phi \in C^1_{\sym}(M)$ is appropriately small, we define the perturbation $M_\phi$ of $M$ by $\phi$ by
\begin{align}
\label{EMpert}
M_\phi : = \graph^{\R^3, \gaux}_M(\phi \nu ) = X^{\R^3, \gaux}_{M, \phi \nu}(M),
\end{align}
where $X^{\R^3, \gaux}_{M, \phi \nu}$ is as in \ref{NT} and $\nu$ is the Euclidean unit normal to $M$.

\begin{lemma}[Properties of $M_\phi$]
\label{LMpertang}
Suppose $M_\phi$ is as in \ref{EMpert}.  Then the following hold. 
\begin{enumerate}[label = \emph{(\roman*)} ]
\item $\partial M_\phi \subset \partial \B^3 \cup P \cup \Pprime$. 
\item $M_\phi$ meets $\partial \B^3$ orthogonally iff $\Bcal_M\phi|_{\partial_{\Sph^2} M_\phi} = 0$.
\item $M_\phi$ meets $P \cup \Pprime$ orthogonally iff $\partial_{\eta} \phi|_{\partial M \setminus \partial_{\Sph^2} M} = 0$.
\end{enumerate}
\end{lemma}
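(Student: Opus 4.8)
The plan is to handle all three items at once by exploiting that each of $\Sph^2 = \partial\B^3$, $P$, and $\Pprime$ is the fixed‑point set of an isometry of $(\R^3,\gaux)$, and is therefore totally geodesic for $\gaux$ — the fact already used in the proof of \ref{Lpresym}. Concretely, since the conformal factor $\Omega$ of \ref{dgaux} is a function of $\dbold^{\R^3}_0$ alone, the linear reflections $\Rcapunder_P$ and $\Rcapunder_{\Pprime}$ preserve $\Omega$ and hence $\gaux$; while in a neighborhood of $\Sph^2$, where $\Omega = (\dbold^{\R^3}_0)^{-1}$, the metric $\gaux$ equals the cylindrical product $(d\log\dbold^{\R^3}_0)^2 + g_{\Sph^2}$, so in the coordinate $s := -\log\dbold^{\R^3}_0$ (with $\{s>0\}$ the interior of $\B^3$) the Euclidean inversion $\iota\colon x\mapsto x/|x|^2$ acts by $s\mapsto -s$ and is a $\gaux$-isometry fixing $\Sph^2$ pointwise.

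First I would prove (i). By \ref{Lpm}(ii), $\partial M \subset \partial\B^3\cup P\cup\Pprime$; and from \ref{Lpm}(iii)--(iv), together with the transversality of $M$ to each of these submanifolds, one reads off that the Euclidean unit normal $\nu$ of $M$ is tangent to $\Sph^2$ along $\partial_{\Sph^2}M$, to $P$ along $\partial M\cap P$, and to $\Pprime$ along $\partial M\cap\Pprime$. Since a $\gaux$-geodesic issuing from a point of a totally geodesic submanifold in a tangent direction remains in that submanifold, the graph map $X^{\R^3,\gaux}_{M,\phi\nu}$ carries each of these three boundary arcs into the corresponding submanifold, which is (i).

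For (ii) and (iii), fix one of the submanifolds $\Pi$ with its reflection $\sigma$ (so $\sigma = \iota,\Rcapunder_P,\Rcapunder_{\Pprime}$ for $\Pi = \Sph^2,P,\Pprime$). Because $\sigma$ is a $\gaux$-isometry, $\sigma(M_\phi)$ is again a $\gaux$-graph over $\sigma(M)$, and matching the normal fields along $\partial M\cap\Pi$ (using \ref{Lpm}(iii) when $\Pi = P,\Pprime$, and \ref{Lpm}(iv) when $\Pi=\Sph^2$) identifies $M_\phi\cup\sigma M_\phi$ with the $\gaux$-graph over $M\cup\sigma M$ of the even reflection across $\partial M\cap\Pi$ of $\phi$ when $\sigma$ is Euclidean-isometric (i.e. $\Pi = P,\Pprime$), respectively of $\phi\,\Omega$ when $\sigma = \iota$ (i.e. $\Pi = \Sph^2$). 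Since $\phi$ is small, $M_\phi$ is transverse to $\Pi$, so $M_\phi$ meets $\Pi$ orthogonally iff $T_qM_\phi$ is $\sigma$-invariant at each boundary point $q$, iff $M_\phi\cup\sigma M_\phi$ is of class $C^1$, iff that even reflection is $C^1$, iff the relevant quantity has vanishing $\eta$-derivative along $\partial M\cap\Pi$. For $\Pi = P$ or $\Pprime$ this says exactly $\partial_\eta\phi = 0$, giving (iii). For $\Pi = \Sph^2$ one has $\Omega = 1$, $\partial_s = -\partial_\eta$ along $\partial_{\Sph^2}M$ (the outward conormal of $M\subset\B^3$ pointing towards larger $\dbold^{\R^3}_0$), and $\Omega = (\dbold^{\R^3}_0)^{-1} = e^{s}$ near $\Sph^2$, so $\partial_s(\phi\,\Omega)|_{s=0} = (-\partial_\eta\phi + \phi)|_{\partial_{\Sph^2}M} = \Bcal_M\phi|_{\partial_{\Sph^2}M}$, and the condition becomes $\Bcal_M\phi|_{\partial_{\Sph^2}M} = 0$, giving (ii).

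The parts I expect to be genuinely delicate lie not in this skeleton but in its implementation, which I would carry out exactly as in the corresponding arguments of \cite{KapWiygul} and the Appendix on $\gaux$-graphs: namely (a) checking that $\nu$ is tangent to $\Pi$ along the \emph{entire} boundary arc, including the gluing and transition regions — which is precisely what \ref{Lpm}(iii)--(iv) and \ref{Lpresym} are designed to supply; and (b) the bookkeeping translating between the Euclidean unit normal $\nu$ used in \eqref{EMpert} and the $\gaux$-unit normal, and confirming that $M\cup\sigma M$ together with its normal field is regular enough for $M_\phi\cup\sigma M_\phi$ to be a bona fide $C^1$ surface. The conceptually essential feature — and the reason the auxiliary metric $\gaux$ is introduced at all — is that (ii) and (iii) are \emph{exact} equivalences rather than equalities modulo higher-order terms; the reflection argument above makes this automatic, because $C^1$-matching across a totally geodesic hypersurface is an exact, not an asymptotic, condition.
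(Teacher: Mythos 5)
Your proposal reaches the correct conclusions and, for items (ii)--(iii), takes a genuinely different route from the paper. For (i) you argue exactly as the paper does: the Euclidean normal $\nu$ is tangent to $\Sph^2$, $P$, $\Pprime$ along the respective boundary arcs, these hypersurfaces are totally geodesic for $\gaux$, so the $\gaux$-geodesics used in \eqref{EMpert} remain inside them. For (ii)--(iii) the paper simply cites the exact identities of \cite[Lemmas 5.6 and 5.19]{KapWiygul} for the boundary angle and conormal derivative of a $\gaux$-graph, whereas you exploit that each of the three hypersurfaces is the fixed-point set of a $\gaux$-isometric involution ($\Rcapunder_P$, $\Rcapunder_{\Pprime}$, and the inversion $\iota(x)=x/|x|^2$, isometric for the cylindrical form $ds^2+g_{\Sph^2}$, $s=-\log\dbold_0$, that $\gaux$ takes near $\Sph^2$) and convert orthogonality into a $C^1$-matching condition for the reflected graph. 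Your conformal bookkeeping is right: $\iota(M_\phi)$ is the $\gaux$-graph over $\iota(M)$ of the function whose product with $\Omega=e^{s}$ is the $\iota$-even extension of $\phi\,\Omega$, and $\partial_s(\phi\Omega)|_{s=0}=(-\partial_\eta\phi+\phi)|_{\partial_{\Sph^2}M}=\Bcal_M\phi$, recovering exactly the Robin operator, while for $P,\Pprime$ the factor is absent and one gets $\partial_\eta\phi=0$. The paper's proof is a two-line citation; yours is self-contained and explains conceptually why the auxiliary metric produces an exact, rather than asymptotic, boundary correspondence.

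One caveat, which you only partially address under your point (b): for $P$ and $\Pprime$ the doubled bases $M\cup\Rcapunder_P M$ and $M\cup\Rcapunder_{\Pprime}M$ are smooth by \ref{Lpm}(iii), so the equivalence ``the doubled graph is $C^1$ iff the evenly extended graphing function is $C^1$'' is legitimate, and (iii) is clean. For $\Sph^2$, however, \ref{Lpm} gives only orthogonality, so $M\cup\iota M$ is in general merely $C^1$ and does not carry a smooth normal field; one cannot literally treat $M_\phi\cup\iota(M_\phi)$ as a $\gaux$-graph over this double and read off $C^1$-ness from the graphing function. The fix is to phrase the criterion pointwise --- $M_\phi$ meets $\Sph^2$ orthogonally at $q$ iff $T_qM_\phi$ is $d\iota_q$-invariant, equivalently (by transversality, valid for small $\phi$) contains the radial direction --- and verify this by the first-order computation in the cylindrical coordinates $(s,\omega)$, which is precisely where $\partial_s(\phi\Omega)|_{s=0}=\Bcal_M\phi$ comes from; this is in substance the content of \cite[Lemma 5.6]{KapWiygul}. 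With that rephrasing your argument for (ii) closes; as written, the step that takes graphs over the merely $C^1$ double is the one soft spot.
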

\begin{proof}
Since $M$ meets $\partial \B^3 \cup P \cup \Pprime$ orthogonally along $\partial M$ by \ref{Lpm}, and $\partial \B^3$, $P$, and $\Pprime$ are totally geodesic with respect to $\gaux$ (recall \ref{dgaux}), it follows from \eqref{EMpert} that (i) holds.  Items (ii) and (iii) follow by combining \cite[Lemma 5.6]{KapWiygul} and \cite[Lemma 5.19]{KapWiygul}.
\end{proof}

\section[The linearized equation on the initial surfaces]{The linearized equation on the initial surfaces}
\label{S:linearized}

\subsection*{Global norms and the mean curvature on the initial surfaces}
$\phantom{ab}$
\nopagebreak

\begin{definition}
\label{dnorm}
For $k\in \N$, $\betahat \in (0, 1)$, $\gammahat \in \R$, and $U$ a domain in $\Dp$, $M$, or $\cyl^+$, we define
\begin{align*}
\| u \|_{k, \betahat, \gammahat; U} : = 
\| u : C^{k, \betahat}(U, r, g, r^{\gammahat})\|,
\end{align*}
where $r : = \dbold_p$ and $g$ is the standard metric on $\D$ when $U \subset \D$, $\rr: = \rho$ and $g$ is the induced Euclidean metric on $M$ when $U \subset M$, and $r: =  \cosh t$ and $g$ is the metric $\cosh^2 t (dt^2 + d\vartheta^2)$ when $U \subset \cyl^+$.
\end{definition}

\begin{convention}
\label{cholder}
From now on, we fix some $\beta \in (0, 1)$ and fix $\gamma = \frac{1}{2}$.  We will suppress the dependence of various constants on $\beta$ and $\gamma$. 
\end{convention}

We estimate now the mean curvature on an initial surface $M$ in terms of the global norm defined in \ref{dnorm}. 

\begin{lemma}
\label{LglobalH}
$\| H - (\Mcal \varphi) w \circ \pi \|_{0, \beta, \gamma-2; M} \leq \tauunder^{1+\alpha/3}$.
\end{lemma}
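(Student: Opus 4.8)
The plan is to reduce the claimed global bound to the pointwise/local estimates already established in Lemma~\ref{Lmcirest}(iv), translated into the global H\"older norm of Definition~\ref{dnorm}. First I would recall that $\pi = \picir \circ \Xcir_{\ucir}^{-1}$ and $\rho = \Xcir_{\ucir}^{-1*}\rhocir$, so the quantity $H - (\Mcal\varphi)\, w\circ\pi$ on $M$ is (up to the diffeomorphism $\Xcir_{\ucir}$) exactly the pushforward of $\Hcir - (\Mcal\varphi)\, w\circ\picir$ on $\Sigmacir$ studied in \ref{Lmcirest}(iv), and that $\Xcir_{\ucir}$ is close to the identity in the relevant weighted norm by \ref{Pinit}(ii)--(iii). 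Thus the job splits into: (a) upgrading the $C^k$ weighted bound of \ref{Lmcirest}(iv) to a $C^{k,\beta}$ weighted bound — which is automatic since the $\rho^{-2}(x)g$-balls used in Definition~\ref{dwHolder} have uniformly bounded geometry, so a $C^{k+1}$ bound controls the $C^{k,\beta}$ seminorm; (b) converting the difference $H - \Xcir_{\ucir}^{-1*}\Hcir$ and $w\circ\pi - \Xcir_{\ucir}^{-1*}(w\circ\picir)$ into errors of order $\tau^2|\log\tau|^2$ (or better) using \ref{Pinit}; and (c) bookkeeping the powers of $\tau$ versus $\tauunder$.

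The key computational point is step (c): by \eqref{Etau}, $\tau = \tau[\zeta,\varpi] = e^{\zeta/\varpi}\tauunder$ with $|\zeta/\varpi|\le C/|\log\varpi|$, so $\tau \Sim_c \tauunder$ with $c$ close to $1$; in particular $\tau^{1+\alpha}|\log\tau|$ and $\tauunder^{1+\alpha}|\log\tauunder|$ are comparable. Since $|\log\tauunder| \le \tauunder^{-\alpha/3}$ for $\varpi$ (hence $\tauunder$) small, we get $\tau^{1+\alpha}|\log\tau| \le C \tauunder^{1+\alpha}|\log\tauunder| \le \tauunder^{1+\alpha - \alpha/3} \cdot C \tauunder^{0} \le \tauunder^{1+2\alpha/3}\le \tauunder^{1+\alpha/3}$ once $\varpi$ is small enough in absolute terms (absorbing the constant $C(0)$ and $C(k)$ for the finitely many $k$ needed to define the $C^{0,\beta}$ norm into the extra power $\tauunder^{\alpha/3}$, using Assumption~\ref{contheta}). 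So the strategy is: take \ref{Lmcirest}(iv) with $k$ large enough to control the $C^{0,\beta}$ seminorm, note the weight there is exactly $\rhocir^{-2} = \rho^{-2}\circ\Xcir_{\ucir}$ matching $r^{\gamma-2} = \rho^{-3/2}$ up to... wait — here I must be careful, because \ref{Lmcirest}(iv) uses weight $\rhocir^{-2}$ while Definition~\ref{dnorm}/Convention~\ref{cholder} uses weight $r^{\gamma-2} = \rho^{-3/2}$ (since $\gamma = 1/2$, $\gamma - 2 = -3/2$). Since $\rho\le 1$ on $M$ (distances in $\B^3$), we have $\rho^{-3/2}\le \rho^{-2}$, so a bound with weight $\rho^{-2}$ implies the same bound with the larger weight $\rho^{-3/2}$; this makes the passage harmless, and I would state it explicitly.

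The remaining step is assembling: write $H - (\Mcal\varphi)w\circ\pi = \Xcir_{\ucir}^{-1*}\big(\Hcir - (\Mcal\varphi)w\circ\picir\big) + \big(H - \Xcir_{\ucir}^{-1*}\Hcir\big) - (\Mcal\varphi)\big(w\circ\pi - \Xcir_{\ucir}^{-1*}(w\circ\picir)\big)$. The first term is bounded by \ref{Lmcirest}(iv) pulled through the near-identity diffeomorphism $\Xcir_{\ucir}$ (using \ref{Pinit}(i) to control $\Xcir_{\ucir} - \id$ and the invariance of weighted norms under $C^1$-close diffeomorphisms). The second term is $O(\tau^2|\log\tau|^2\cdot\tau^{-2})$ relative to weight $\rho^{-2}$... more carefully, the mean curvature difference from an $\ucir$-graph is $\Lcal\ucir + Q_{\ucir}$, which by \ref{Pinit}(i) and \ref{Lmcirest}(iii) is $O(\tau|\log\tau|)$ in the $\rho^{-1}$-weighted norm, hence $O(\tau|\log\tau|)$ in the $\rho^{-2}$-weighted norm since $\rho\le 1$ — actually this already follows from the computation embedded in the proof of \ref{Pinit}(v). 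The third term: $\Mcal\varphi = O(\tauunder)$ by \ref{Lpmis} and \ref{Lmismatch}, while $w$ is a fixed smooth function supported away from $p$ so $w\circ\pi - w\circ\picir\circ\Xcir_{\ucir}^{-1}$ is $O(\|\Xcir_{\ucir} - \id\|) = O(\tau|\log\tau|)$, giving a product of order $\tauunder\cdot\tau|\log\tau|$, comfortably below $\tauunder^{1+\alpha/3}$. Collecting, every term is $\le C\tauunder^{1+\alpha}|\log\tauunder| \le \tauunder^{1+\alpha/3}$ for $\varpi$ small. I expect the only real subtlety — the ``main obstacle'' — to be the careful matching of weights ($\rhocir^{-2}$ vs.\ $r^{\gamma-2}$) and the verification that the $C^k\to C^{k,\beta}$ upgrade costs nothing because of the bounded-geometry hypothesis built into Definition~\ref{dwHolder}; everything else is routine substitution of the already-proven lemmas together with $\tau\Sim\tauunder$ and the freedom to take $\varpi$ small.
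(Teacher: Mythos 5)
Your reduction has a genuine gap, and it sits exactly at the point you flagged as the main obstacle and then declared harmless: the change of weight from $\rho^{-2}$ to $r^{\gamma-2}=\rho^{-3/2}$. In the norm of Definition \ref{dwHolder} the weight appears in the denominator, so a bound in the $f_1$-weighted norm implies a bound in the $f_2$-weighted norm only when $f_2\ge f_1$; here $\rho^{-3/2}\le\rho^{-2}$ wherever $\rho\le 1$, so on that region $\rho^{-3/2}$ is the \emph{smaller} weight and hence gives the \emph{larger} norm -- your implication runs backwards. The transfer therefore costs a factor $\rho^{-1/2}$ on the support of the quantity being estimated, and on the catenoidal region $\rho$ descends to $\approx\tau$, so a naive global conversion turns the bound $C\,\tau^{1+\alpha}|\log\tau|$ (whether taken from \ref{Pinit}(v) or re-derived, as you do, from \ref{Lmcirest}(iv) and the near-identity map $\Xcir_{\ucir}$) into $C\,\tau^{1/2+\alpha}|\log\tau|$, which for small $\alpha$ is far larger than $\tauunder^{1+\alpha/3}$; the margin $\tauunder^{\alpha/3}$ you budgeted for logarithms and constants cannot absorb $\tau^{-1/2}$.

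What makes the lemma true is region-dependent information that your global transfer discards. On $\Sigmacat$ away from the strip where $\ucir\ne 0$ the surface is an exact piece of catenoid and $w\circ\pi\equiv 0$ (the support of $w$ lies at unit distance from $p$), so $H-(\Mcal\varphi)\,w\circ\pi$ vanishes identically there; on that strip one uses the $\rhocir^{-1}$-weighted estimate for $\Xcir_{\ucir}^{*}H-\Hcir$ appearing in the proof of \ref{Pinit}(v) together with $\Hcir|_{\Sigmacircat}=0$, which after multiplication by $\rho^{3/2}\le(\delta')^{1/2}\rho$ contributes at most $C\,\tau^{1+\alpha/2}|\log\tau|$; and on $\{\rho\ge\delta'/8\}$ the weight conversion costs only $(\delta'/8)^{-1/2}\sim\tau^{-\alpha/2}$, again giving $C\,\tau^{1+\alpha/2}|\log\tau|$. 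Since $\tau\Sim_2\tauunder$, the remaining margin $\alpha/6$ absorbs the logarithm and the constants -- which is precisely why the exponent in the statement is $1+\alpha/3$ rather than $1+\alpha$. Two smaller points: by the definition $\pi=\picir\circ\Xcir_{\ucir}^{-1}$ your third term $w\circ\pi-\Xcir_{\ucir}^{-1*}(w\circ\picir)$ is identically zero, so it needs no estimate; and your detour through \ref{Lmcirest}(iv) merely re-derives \ref{Pinit}(v), which the paper's proof simply cites before making (implicitly) the region-by-region weight comparison described above.
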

\begin{proof}
The estimate follows from \ref{dnorm} and \ref{Pinit}(v), using from \ref{cholder} that $\gamma = 1/2$.
\end{proof}

In the next lemma, we compare the linearized operator $\Lcal_M$ to corresponding model operators on $\cyl$ and on $\D$.  In order to do this, we need the following. 

\begin{definition}
We define the operator $\Lcalhatcat$ on $\cyl$ by $\Lcalhatcat : = \partial^2_t + \partial^2_{\vartheta} + 2 \sech^2 t$.
\end{definition}

\begin{lemma}
\label{Lopdiff}
The following hold.
\begin{enumerate}[label = \emph{(\roman*)} ]
\item If $f \in C^{2, \beta}(\kappa^{-1}(\Sigmacat))$, then
	 \begin{enumerate}[label=\emph{(\alph*)}]
	\item $\| \Lcal_M \kappa^{*-1} f - \kappa^{*-1}\rho^{-2} \Lcalhatcat f \|_{0, \beta, \gamma-2; \Sigmacat}
	\leq
	 C 
	\tau^{2\alpha} \| f\|_{2, \beta, \gamma; \kappa^{-1}(\Sigmacat)}$.
	\item 
	$
	 \| \partial_\sigma \kappa^{*-1} f + \kappa^{*-1}\rho^{-1}(\sgn \vartheta) \partial_{\vartheta} f\|_{0, \beta, \gamma-1; \Sigmacat}
	 \leq 
	 C
	 \tau^{2\alpha}
	 \|f \|_{1, \beta, \gamma; \kappa^{-1}(\Sigmacat)}.$
	 \end{enumerate}
\item If $f \in C^{2,\beta}(\pi(\Sigmad))$, then 
	 \begin{enumerate}[label=\emph{(\alph*)}]
	\item 
	$
	\|\Lcal_M \pi^* f - \pi^* \Lcal_{\D} f \|_{0, \beta, \gamma-2; \Sigmad}
	\leq
	C
	\tau^{2(1-\alpha)} |\log \tau|^2
	\| f\|_{2, \beta, \gamma; \pi( \Sigmad)}.
	$
	\item 
	$
	\| \partial_\sigma \pi^* f + \pi^* \partial_{\rr} f \|_{0, \beta, \gamma-1; \partial_{\Sph^2} \Sigmad}
	\leq
	C
	\tau^{2(1-\alpha)}|\log \tau|^2
	\| f\|_{1, \beta, \gamma; \pi(\partial_{\Sph^2} \Sigmad)}.
	$
	\end{enumerate}
\end{enumerate}
\end{lemma}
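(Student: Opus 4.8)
The plan is to derive both parts from the corresponding estimates on the difference between the induced metric $g$ on $M$ (and its boundary one-form) and the relevant model metrics, which have already been recorded in \ref{Pinit}, combined with the general principle that a Laplace–Beltrami operator (and the Neumann boundary operator) depends on the metric in a controlled way. For part (i), I would use the parametrization $\kappa : \cyla^+ \to \Sigmacat$ and the estimate \ref{Pinit}(ii) comparing $\kappa^*g$ to $\kappahat^*\gcir = \tau^2\cosh^2 t(dt^2+d\vartheta^2)$ in the weighted norm with weight $\rhocir\tau^2|\log\tau|^2 + \rhocir^4$. Since $\rhocir = \tau\cosh t$ on $\cyla^+$, the natural-scale function is $r = \cosh t$ and the conformal factor relating $\kappahat^*\gcir$ to the unit-scale cylinder metric $\cosh^2 t(dt^2+d\vartheta^2)$ is $\tau^2$; rescaling, the relative error of the metric in $C^{k,\beta}(\cyl^+, r, \cosh^2 t(dt^2+d\vartheta^2))$ is $O(\tau^2|\log\tau|^2\rho^{-1} + \rho^2) = O(\tau^{2\alpha})$ on the relevant region, using $\rho \geq \tau^{\alpha}$ there (since $\delta' = \tau^{\alpha}$ and $\Sigmacat \subset D_p^{\R^3}(\delta')$ forces $\rhocir \lesssim \tau^\alpha$, but also $\rhocir \geq \tau$, so care is needed — the worst case for the $\rho^{-1}$ term is $\rho \sim \tau$, giving $\tau|\log\tau|^2$, while the $\rho^2$ term is at worst $\tau^{2\alpha}$; one checks $\tau|\log\tau|^2 \leq \tau^{2\alpha}$ for small $\tau$ since $\alpha$ is small). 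The model operator $\Lcalhatcat = \partial_t^2 + \partial_\vartheta^2 + 2\sech^2 t$ is precisely $\rho^2$ times the intrinsic Jacobi operator of the standard catenoid in these coordinates, and $\Lcal_M = \Delta_g + |A|^2$; writing $\Lcal_M - \kappa^{*-1}\rho^{-2}\Lcalhatcat$ as the sum of (the difference of the Laplacians, controlled by the metric difference above via the standard formula for $\Delta_g$ in coordinates) plus (the difference $|A|^2 - \rho^{-2}\cdot 2\sech^2 t$, controlled by \ref{Pinit}(iv)), the estimate (i)(a) follows after tracking weights: the loss of two powers of $r$ in the target norm matches the $\rho^{-2}$ scaling of a Jacobi operator. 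Part (i)(b) is the analogous but simpler statement for the first-order conormal operator along $\partial_{\Sph^2}\Sigmacat$, using \ref{Pinit}(vi) in place of (ii) and (iv).

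For part (ii), the same scheme applies with $\pi : \Sigmad \to \Dp$ in place of $\kappa$, the flat metric $g$ on $\Dp$ as the model, and $\Lcal_\D = \Delta + |A_\D|^2 = \Delta$ (since $\Dp$ is totally geodesic) as the model operator — except that here $\Lcal_M$ genuinely has an $|A|^2$ term of size $O(\tau^2|\log\tau|^2)$ by \ref{Pinit}(iv), which is absorbed into the stated error $O(\tau^{2(1-\alpha)}|\log\tau|^2)$. The metric comparison is \ref{Pinit}(iii), giving $\|g - \pi^*g : C^k(\Sigmad, \rho, g)\| \leq C\tau^2|\log\tau|^2$; but the target norm in (ii)(a) carries a $\rho^{-2}$-type weight and the natural scale on $\Sigmad$ near $p$ is $\rho \sim \delta' = \tau^\alpha$, so converting from the $\rho$-scaled $C^k$ norm to the unit-scaled weighted norm in \ref{dnorm} costs a factor $(\delta')^{-2} = \tau^{-2\alpha}$, producing exactly $\tau^2|\log\tau|^2 \cdot \tau^{-2\alpha} = \tau^{2(1-\alpha)}|\log\tau|^2$. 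Part (ii)(b) follows identically from \ref{Pinit}(vii) (and \ref{Pinit}(iii) restricted to the boundary), with only one power of the scale lost since it is a first-order operator.

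The routine ingredient throughout is the standard estimate: if $g_1, g_2$ are two metrics on a domain $U$ with $\|g_1 - g_2 : C^{k,\beta}\| \leq \epsilon$ small relative to $g_2$, then $\|\Delta_{g_1} f - \Delta_{g_2} f\|_{C^{k-2,\beta}} \leq C\epsilon\|f\|_{C^{k,\beta}}$, with the analogous statement for the conormal derivative; this is the kind of computation already invoked via \cite[Lemma C.10, C.11]{KapMcGLDG} in the proof of \ref{Lbase}(vi), so I would cite those. The only genuine subtlety — and the main thing to get right — is the bookkeeping of the weight functions $r$, $\rho$, $\rhocir$ and the scale factors $\tau$, $\delta' = \tau^\alpha$ across the three coordinate systems of \ref{dnorm}, making sure that the powers of $r$ in the target norms (two lost for the second-order operators, one for the first-order boundary operators) are exactly what the Jacobi-operator scaling produces, and that the worst-case value of $\rho$ on each region (which is $\sim\tau$ on the catenoidal part and $\sim\tau^\alpha$ on the graph part) yields the advertised powers $\tau^{2\alpha}$ and $\tau^{2(1-\alpha)}|\log\tau|^2$ respectively. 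I expect no conceptual obstacle; the work is entirely in the careful dimensional analysis of the weighted norms.
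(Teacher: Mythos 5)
Your proposal is correct and follows essentially the same route as the paper's proof: rescale by $\rho^2$ (or $\rho$ for the boundary operators), compare metrics via \ref{Pinit}(ii) (resp.\ (iii)), control the second-fundamental-form contribution by \ref{Pinit}(iv) (and the cited Lemmas C.10(iv) and C.11 of \cite{KapMcGLDG}), and track the worst-case weight $\rho \sim \tau$ on $\Sigmacat$ versus $\rho \sim \delta' = \tau^\alpha$ on $\Sigmad$ to obtain the stated powers $\tau^{2\alpha}$ and $\tau^{2(1-\alpha)}|\log\tau|^2$. One small remark: your pairing of \ref{Pinit}(vi) with (i)(b) (catenoidal region) and \ref{Pinit}(vii) with (ii)(b) (disk region) is the logically correct one; the paper's printed proof appears to have these two citations interchanged.
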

\begin{proof}
We first prove (i).  For the estimate (i)(a), by the definitions it suffices to prove that
\begin{align*}
\| \rho^2 \Lcal_M \kappa^{*-1} f - \kappa^{*-1} \Lcalhatcat f \|_{0, \beta, \gamma; \Sigmacat}
\leq 
C
\tau^{2\alpha}  \| f\|_{2, \beta, \gamma; \kappa^{-1}(\Sigmacat)}. 
\end{align*}
The estimate now follows by using \cite[Lemma C.10(iv)]{KapMcGLDG} and \ref{Pinit}(ii) to estimate the difference of the corresponding Laplacians, and using \ref{Pinit}(iv).  The proof of (i)(b) is similar and uses \ref{Pinit}(vii), so we omit the details.

We now prove (ii).  Using \cite[Lemma C.10(iv)]{KapMcGLDG} to estimate the difference of the Laplacians, \cite[Lemma C.11]{KapMcGLDG} to estimate norm squared of the second fundamental form of $M$, we find that
\begin{align*}
\|\Lcal_M \pi^* f - \pi^* \Lcal_{\D} f \|_{0, \beta, \gamma-2; \Sigmad}
&\leq
C
\|\rho^2 \Lcal_M \pi^* f - \rho^2\pi^* \Lcal_{\D} f \|_{0, \beta, \gamma; \Sigmad}\\
&\leq C
\| \rho^{-2}( g - \pi^*g)\|_{2, \beta, \gamma; \Sigmad} \| f \|_{2, \beta, \gamma; \pi(\Sigmad)}.
\end{align*}
The desired estimate now follows from using \ref{Pinit}(iii) to estimate the difference of the metrics.  The proof of (ii)(b) is similar and uses \ref{Pinit}(vi), so we omit the details. 
\end{proof}

\subsection*{The definition of $\Rcal^{appr}_M$}
$\phantom{ab}$
\nopagebreak

\begin{definition}[Localization operators]
\label{dloc}
We define operators
\begin{equation*}
\begin{aligned}
&\Psicat : C(\Sigmacat) \rightarrow C(M) \quad \text{and} \quad
\Psicat : C(\partial_{\Sph^2}\Sigmacat)\rightarrow C(\partial M), \\
&\Psidisk : C(\Sigmadisk) \rightarrow C(M) \quad \text{and} \quad 
\Psidisk: C(\partial_{\Sph^2} \Sigmadisk) \rightarrow C(\partial M)
\end{aligned}
\end{equation*}
by requesting that $\Psicat E$ and $\Psidisk E$ are respectively supported on $\Sigmacat$ and $\Sigmad$ and satisfy
\begin{equation}
\begin{aligned}
&\Psicat E : = \kappa^{*-1} ( \Psibold[ a-2 , a-1 ; t]( \kappa^* E, 0) ), \\
& \Psidisk E : = \pi^* ( \Psibold[\textstyle{\frac{1}{8}}\delta', \frac{1}{7} \delta'; \dbold_p]( 0 , \pi^{*-1} E)).
 \end{aligned}
 \end{equation}
\end{definition}

\begin{definition}
For $S = \cyl^+$ or $S = \partial \cyl^+$, we define the Banach spaces
\begin{align*}
C^{k, \beta, \gamma}_{\sym}(S) = \{ u \in C^{k, \beta}(S, g): \| u \|_{k, \beta, \gamma; S} < \infty,  u(t, \vartheta) = u(t, - \vartheta) \}.
\end{align*}
\end{definition}

We recall the following result from \cite[Prop. 6.9]{KapWiygul}: 
\begin{prop}[Solvability of the model problem on the half-catenoid]
\label{pclin}
There is a linear map
\begin{align}
\Rcalhat_{\cat} : C^{0, \beta, \gamma}_{\sym}(\cyl^+) \times  C^{1, \beta, \gamma}_{\sym}(\partial \cyl^+) 
\rightarrow  C^{2, \beta, \gamma}_{\sym}(\cyl^+)
\end{align}
such that if $(E, E^\partial)$ belongs to the domain of $\Rcalhat_{\cat}$ and $u = \Rcalhat_{\cat} (E, E^\partial)$, then the following hold.
\begin{enumerate}[label = \emph{(\roman*)} ]
\item $\Lcalhatcat u = E$. 
\item $-(\sgn \vartheta) \partial_\vartheta u|_{\partial \cyl^+} = E^{\partial}$.
\item $\| u \|_{2, \beta, \gamma; \cyl^+} \leq
C( \| E\|_{0, \beta, \gamma; \cyl^+} + \| E^{\partial}\|_{1, \beta, \gamma; \partial \cyl^+})$. 
\end{enumerate}
\end{prop}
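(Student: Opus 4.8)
The plan is to reprove this in the manner of \cite[Proposition 6.9]{KapWiygul}, by separating variables in $\vartheta$ and isolating the unique $\vartheta$-mode on which the reduced operator in $t$ fails to be invertible.

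First I would write $u = u_0 + u^\perp$, where $u_0 = u_0(t)$ is the $\vartheta$-average of $u$ on each cross-section $\cyl^+_{\tunder}$ and $u^\perp$, the complementary oscillatory part, is even in $\vartheta$, has vanishing $\vartheta$-average on each $\cyl^+_{\tunder}$, and inherits the homogeneous Neumann condition $\partial_\vartheta u^\perp|_{\partial \cyl^+} = 0$ from the symmetry and item (ii).  On the space of such oscillatory functions the Poincar\'e inequality $\int_{\cyl^+_{\tunder}} (\partial_\vartheta f)^2\, d\vartheta \geq 4 \int_{\cyl^+_{\tunder}} f^2\, d\vartheta$ holds, the lowest admissible $\vartheta$-frequency, given the evenness in $\vartheta$ and the Neumann condition, being $\cos 2\vartheta$; integrating $\Lcalhatcat f$ against $f$ over $\cyl^+$ with respect to $dt\, d\vartheta$ and integrating by parts therefore gives
\begin{equation*}
- \int_{\cyl^+} ( \Lcalhatcat f)\, f = \int_{\cyl^+} \left( ( \partial_t f)^2 + ( \partial_\vartheta f)^2 - 2 \sech^2 t\; f^2 \right) \geq \tfrac14 \int_{\cyl^+} \left( ( \partial_t f)^2 + ( \partial_\vartheta f)^2 + f^2 \right),
\end{equation*}
so $- \Lcalhatcat$ is uniformly coercive on the oscillatory subspace.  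I would then solve the mixed boundary value problem $\Lcalhatcat u^\perp = E - E_0$ on $\cyl^+$, $-( \sgn \vartheta) \partial_\vartheta u^\perp|_{\partial \cyl^+} = E^\partial$, where $E_0 = E_0(t)$ is the $\vartheta$-average of $E$, by Lax--Milgram followed by Schauder estimates up to the boundary $\partial \cyl^+$; since $\gamma = \tfrac12 \in (0,2)$ lies below the first positive indicial root of this operator (the indicial roots attached to the $\vartheta$-modes $\cos 2k\vartheta$ being $\pm 2, \pm 4, \dots$), the estimates hold in the weighted spaces and yield $\| u^\perp \|_{2, \beta, \gamma; \cyl^+} \leq C( \| E \|_{0, \beta, \gamma; \cyl^+} + \| E^\partial \|_{1, \beta, \gamma; \partial \cyl^+} )$.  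It is exactly here that $E^\partial$ is required to carry one derivative more than $E$.

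For the average part I would solve the ordinary differential equation $u_0'' + 2 \sech^2 t\; u_0 = E_0$ on $\R$.  The operator $\partial_t^2 + 2\sech^2 t$ is \emph{not} invertible: its kernel is spanned by $\tanh t$ and $1 - t\tanh t$, the Jacobi fields generated on the catenoid by vertical translation and by dilation.  Its Wronskian is nevertheless constant, so variation of parameters with integrations based at $t = 0$ produces a solution $u_0$ that depends linearly on $E_0$ and grows at most linearly in $t$; since $\gamma > 0$, linear growth is dominated by $\cosh^\gamma t$, whence $\| u_0 \|_{2, \beta, \gamma; \cyl^+} \leq C \| E_0 \|_{0, \beta, \gamma; \cyl^+} \leq C \| E \|_{0, \beta, \gamma; \cyl^+}$.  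Setting $\Rcalhat_{\cat}( E, E^\partial) := u_0 + u^\perp$ then defines a linear map: item (iii) follows from the two estimates above, item (i) from $\Lcalhatcat u_0 = E_0$ and $\Lcalhatcat u^\perp = E - E_0$, and item (ii) from $\partial_\vartheta u_0 \equiv 0$ together with the boundary condition satisfied by $u^\perp$; evenness in $\vartheta$ is preserved throughout, so $u \in C^{2, \beta, \gamma}_{\sym}(\cyl^+)$.

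The main obstacle is the average ($k=0$) mode: one must recognize that its obstruction space consists of honest geometric Jacobi fields and that these are harmless only because $\gamma$ is strictly positive, so that the linearly growing substitute kernel $1 - t\tanh t$ still fits inside the permitted growth $\cosh^\gamma t$.  A secondary technical point, carried out in detail in \cite{KapWiygul}, is the weighted Schauder theory for the coercive oscillatory part on the noncompact half-cylinder $\cyl^+$ with the exponential weight $\cosh^\gamma t$, together with the verification that $\gamma = \tfrac12$ is a non-resonant exponent for every $\vartheta$-mode.
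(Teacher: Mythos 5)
Your overall strategy (separate the $\vartheta$-modes, use coercivity away from the lowest mode, treat the mean mode by hand via the explicit Jacobi fields $\tanh t$ and $1-t\tanh t$, and use $0<\gamma<2$ as a non-resonant weight) is sound and is in the spirit of \cite[Prop.~6.9]{KapWiygul}, which is all the present paper invokes (it gives no proof of its own). But as written your argument has a genuine gap in the treatment of the inhomogeneous boundary datum $E^\partial$. You first assert that $u^\perp$ ``inherits the homogeneous Neumann condition from the symmetry and item (ii)''; this is false: the evenness in $\vartheta$ only forces $\partial_\vartheta u=0$ at $\vartheta=0$, while item (ii) prescribes $\partial_\vartheta u=\mp E^\partial$ at $\vartheta=\pm\pi/2$, and since $u_0$ is $\vartheta$-independent, all of this inhomogeneous data must be carried by $u^\perp$. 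You then switch and pose the mixed problem $\Lcalhatcat u^\perp=E-E_0$, $-(\sgn\vartheta)\partial_\vartheta u^\perp=E^\partial$ for $u^\perp$ --- but a solution of that problem cannot be mean-zero: averaging the equation over $\vartheta\in[-\pi/2,\pi/2]$ and using the boundary condition shows its mean $m(t)$ satisfies $m''+2\sech^2 t\, m=\tfrac{2}{\pi}E^\partial$, which is nonzero in general. So either the coercivity/Lax--Milgram step (which lives on the mean-zero subspace with homogeneous Neumann condition) does not apply, or the function you build fails item (ii); in addition your $u_0$-equation misses the flux source $\tfrac{2}{\pi}E^\partial$. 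The standard repair is to first subtract an explicit even lift of the boundary data (e.g.\ $-E^\partial(t)\,\vartheta^2/\pi$, which is where the extra derivative on $E^\partial$ is genuinely used), thereby reducing to homogeneous Neumann data with a modified interior source, and only then split into mean and mean-zero parts.

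Two further points need attention. First, your claim that the variation-of-parameters solution of $u_0''+2\sech^2t\,u_0=E_0$ ``grows at most linearly'' presumes $E_0$ is bounded, but in the norms of the statement $E_0$ may grow like $\cosh^\gamma t$ (the weight on $E$ in (iii) is $\cosh^\gamma t$, not a decaying one; the factor $\rho^2$ has already been absorbed when $\Rcalhat_{\cat}$ is applied in \ref{duv}). The estimate $\|u_0\|_{2,\beta,\gamma;\cyl^+}\le C\|E_0\|_{0,\beta,\gamma;\cyl^+}$ is still true, but it requires the cancellation in the Green kernel: with Wronskian $\equiv -1$ the kernel $\tanh t\,(1-s\tanh s)-(1-t\tanh t)\tanh s$ is $O(1+|t-s|)$, and convolving $1+|t-s|$ against $\cosh^\gamma s$ gives $O(\cosh^\gamma t)$; the phrase ``linear growth is dominated by $\cosh^\gamma t$'' does not substitute for this. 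Second, since $E-E_0$ and $E^\partial$ need not lie in $L^2$ of the infinite half-cylinder, Lax--Milgram cannot be invoked globally; one must either argue mode-by-mode with ODE estimates in $t$, or solve on exhausting finite cylinders and pass to the limit using uniform weighted (Schauder or energy) estimates furnished by the coercivity. These repairs are all standard, but they are exactly the content of the proposition, so the write-up as it stands does not yet constitute a proof.
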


We next consider a corresponding model problem on the half-disk.  Because of \ref{pclin}, it will suffice to assume that the inhomogeneous data on the half-disk is supported outside a neighborhood of $p$ whose size is on the order of $\delta'$. 

\begin{prop}[Solvability of the model problem on the half-disk]
\label{pdlin}
There is a linear map
\begin{multline*}
\Rcalhat_{\D} : \{ E \in C^{0, \beta}_{\sym}(\Dp) : \supp(E) \subset \Dp \setminus D_p(\delta'/8)\}
\\
\times \{ E^\partial \in C^{1, \beta}_{\sym} (\partial_{\Sph^2} \Dp) : \supp(E^\partial) 
\subset \partial_{\Sph^2} \Dp \setminus D_p(\delta'/8)\} 
\rightarrow
C^{2, \beta}_{\sym}(\Dp) \times \R
\end{multline*}
such that if $(E, E^\partial)$ belongs to the domain of $\Rcalhat_{\D}$ and $(u, \mu) = \Rcalhat_{\D}(E, E^\partial)$, then the following hold.
 \begin{enumerate}[label = \emph{(\roman*)} ]
\item $\Lcal_{\D} u = E+ \mu w$.
\item $\Bcal_{\D} u|_{\partial_{\Sph^2} \Dp} = E^\partial$.
\item $\partial_{\eta} u|_{\partial \Dp \setminus \partial_{\Sph^2} \Dp} = 0$.  
\item $\| u \|_{2, \beta, \gamma; \Dp} + | \mu |   \leq C ( \| E\|_{0, \beta, \gamma-2; \Dp}+
\| E^\partial\|_{1, \beta, \gamma-1; \partial_{\Sph^2} \Dp})$.
\end{enumerate}
\end{prop}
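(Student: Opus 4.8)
The plan is to solve the problem in three stages: first solve the boundary value problem $\Lcal_\D u_0 = E$, $\Bcal_\D u_0|_{\partial_{\Sph^2}\Dp} = E^\partial$, $\partial_\eta u_0|_{\partial\Dp\setminus\partial_{\Sph^2}\Dp}=0$ without the obstruction term; then add a multiple $\mu\vunder$ of the function from \ref{dobs} so that the corrected solution vanishes at $p$, which turns the forcing into $E+\mu w$ and produces the solution lying in the weighted space of \ref{dnorm}; and finally prove the weighted a priori estimate. Note that $\Lcal_\D=\Delta$ since $\D$ is totally geodesic, that $\Bcal_\D=-\partial_\eta+1$, and that by \ref{conalpha} the data is supported where $\dbold_p\geq\delta'/8$, so away from $p$ we are dealing with a fixed self-adjoint elliptic boundary value problem; the genuinely new features are the right-angle corners of $\Dp$ at $\ell\cap\Sph^2$, the one-dimensional obstruction spanned by $w$, and the behavior near $p$.

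\emph{Stage 1.} The triple $(\Lcal_\D\, ,\, \Bcal_\D|_{\partial_{\Sph^2}\Dp}\, ,\, \partial_\eta|_{\partial\Dp\setminus\partial_{\Sph^2}\Dp})$ is a formally self-adjoint elliptic boundary value problem on $\Dp$; its right-angle corners present no difficulty (handled as in \cite{KapWiygul}), so it is Fredholm of index zero between the appropriate symmetric H\"older spaces. To compute its kernel I would reflect evenly across the diameter $\partial\Dp\setminus\partial_{\Sph^2}\Dp = \D\cap\ell$: exactly as in the uniqueness part of \ref{Lldexist}, any homogeneous solution extends to a harmonic function on $\D$ satisfying $\Bcal_\D=0$ on all of $\partial\D$, hence is a first Steklov eigenfunction, i.e. the restriction of a coordinate function on $P$; each such function is odd under $\Rcapunder_\ell$ or under $\Rcapunder_{\ell^\perp}$, so the kernel in the $\groupdisk$-invariant, Neumann-on-the-diameter subspace is trivial, and hence so is the cokernel by self-adjointness. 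Therefore there is a unique $u_0 = u_0[E,E^\partial]\in C^{2,\beta}_{\sym}(\Dp)$, depending linearly on $(E,E^\partial)$, solving the three equations above.

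\emph{Stage 2.} Set $\mu := -u_0(p)$ and $u := u_0 + \mu\vunder$, with $\vunder$ and $w := \Lcal_\D\vunder$ as in \ref{dobs}; then $u$ depends linearly on the data and $\Lcal_\D u = E+\mu w$ gives (i). Since $\Bcal_\D\vunder|_{\partial_{\Sph^2}\Dp}=0$ by \ref{Lvmismatch}(ii), we get (ii). Because the cutoff in \ref{dobs} together with \eqref{EPsibold} makes $\vunder$ equal to $v$ on a neighborhood of $p$ (where $\dbold_0\circ f$ vanishes, by \ref{Lconfo}) and equal to $0$ on a neighborhood of the diameter (where $\dbold_0\circ f\equiv 1$), we have $\partial_\eta\vunder|_{\partial\Dp\setminus\partial_{\Sph^2}\Dp}=0$, giving (iii), and $u(p) = u_0(p) + \mu\, v(p) = u_0(p)+\mu = 0$ using $v(p)=\langle p,p\rangle = 1$ from \ref{dv}.

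\emph{Stage 3, and the main obstacle.} Finiteness of $\|u\|_{2,\beta,\gamma;\Dp}$ with $\gamma=\tfrac12$ (fixed in \ref{cholder}) requires decay $u = O(\dbold_p^{1/2})$ near $p$ with appropriately rescaled derivatives, and this is exactly what the correction in Stage 2 buys: since $E,E^\partial$ vanish near $p$ and $\vunder=v$ there (a coordinate function, so $\Bcal_\D v|_{\partial_{\Sph^2}\Dp}=0$), $u$ is harmonic in a punctured neighborhood of $p$ with the homogeneous condition $\Bcal_\D u = 0$ on $\partial_{\Sph^2}\Dp$; a separation of variables at $p$ (equivalently, transplanting by the conformal involution $f$ of \ref{Lconfo} and expanding in a power series) shows that the only bounded solutions of this model not vanishing at $p$ have a nonzero limit there, so $u(p)=0$ forces $u = O(\dbold_p)$ near $p$, and since $\gamma\in(0,1)$ this is more than sufficient. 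On $\{\dbold_p\geq\delta'/16\}$ the estimate follows from interior, Robin-boundary and Neumann-boundary Schauder estimates on unit-radius balls in the rescaled metrics $\dbold_p^{-2}(x)g$, combined with the scale invariance of the norms in \ref{dwHolder} (the corners handled as in Stage 1), the needed $C^0$ input being supplied by the maximum principle together with $|\mu| = |u_0(p)| \leq C(\|E\|_{0,\beta,\gamma-2;\Dp} + \|E^\partial\|_{1,\beta,\gamma-1;\partial_{\Sph^2}\Dp})$. Combining the two regimes yields (iv). The step I expect to be the main obstacle is making the constant $C$ in (iv) uniform in $\tau$: because $\delta'=\tau^\alpha\to 0$ the support of the data slides toward $p$, but since the weighted H\"older norms of \ref{dwHolder} are scale invariant and the model analysis at $p$ does not involve $\tau$, the argument --- which closely parallels the proof of \cite[Prop. 6.9]{KapWiygul} recalled in \ref{pclin} and the analogous estimates in \cite{KapMcGLDG} --- produces a $\tau$-independent constant, completing the proof.
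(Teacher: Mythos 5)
Your overall architecture (solve the symmetric Robin/Neumann problem on $\Dp$, then add $\mu\vunder$ with $\mu=-u_0(p)$ so the corrected solution vanishes at $p$, then prove the weighted estimate) is the standard LD route and matches what the paper intends by deferring to \cite[Prop.~6.23]{KapWiygul}; Stages 1 and 2 are essentially correct (the kernel computation by even reflection across $\ell$ and the symmetries, and the verification that $\vunder=v$ near $p$, $\vunder\equiv 0$ near the diameter, $\Bcal_{\D}\vunder|_{\partial_{\Sph^2}\Dp}=0$, are all sound). The genuine gap is in Stage 3, and it is exactly the point you flag as "the main obstacle": the uniform-in-$\tau$ constant in (iv). The estimate you need is a weighted $C^0$ decay bound, $\sup_{\dbold_p=r}|u|\leq C\,r^{\gamma}\,(\|E\|_{0,\beta,\gamma-2;\Dp}+\|E^\partial\|_{1,\beta,\gamma-1;\partial_{\Sph^2}\Dp})$ for all $r$ between the scale $\delta'$ of the support and the unit scale, with $C$ independent of $\delta'$. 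Your two substitutes for it do not deliver it. First, "$u(p)=0$ forces $u=O(\dbold_p)$" is only valid inside $D_p(\delta'/8)$, where $u$ is actually harmonic, and the implied constant is $\sim(\delta')^{-1}\sup_{D_p(\delta'/8)}|u|$; to convert this into the weight $\dbold_p^{\gamma}$ you already need the decay bound at scale $r\sim\delta'$, so the argument is circular. Second, the bound $|\mu|=|u_0(p)|\leq C(\dots)$ is asserted, not proved, and it is of the same depth as (iv): the fixed-domain Fredholm/Schauder estimate controls $u_0$ only by the \emph{unweighted} norm of $E$, which is as large as $(\delta')^{\gamma-2}$ times the weighted norm, so it degenerates as $\delta'\to 0$. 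Moreover the maximum principle is not available here: the boundary operator is $\Bcal_{\D}=-\partial_\eta+1$, i.e.\ a Robin condition with the sign for which the associated quadratic form $\int_{\D}|\nabla u|^2-\int_{\partial\D}u^2$ is indefinite (it is negative on constants and has kernel spanned by the coordinate functions on the full disk), so no pointwise comparison argument applies without substantial modification.

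What is missing is the actual mechanism that produces the $\tau$-independent constant: either a dyadic decomposition of $(E,E^\partial)$ into pieces supported on annuli $\{2^{-j-1}\leq\dbold_p\leq 2^{-j+1}\}$, solving for each piece with scale-invariant estimates and decay toward $p$ at the indicial rate $1$ (available only after the constant mode is absorbed into $\mu\vunder$) and boundedness outward (from triviality of the symmetric kernel), then summing the geometric series, convergent precisely because $\gamma\in(0,1)$ lies strictly between the indicial roots $0$ and $1$ at $p$; or an equivalent Green's-function/blow-up (contradiction) argument establishing the same decay. You correctly identify all the ingredients (indicial analysis at $p$, the role of $\gamma\in(0,1)$, scale invariance of the norms of \ref{dwHolder}), but the step that assembles them is replaced by the assertion that the argument "produces a $\tau$-independent constant"; as written, Stage 3 would not survive scrutiny, and this assembled decay argument is precisely the content of the proof of \cite[Prop.~6.23]{KapWiygul} that the paper invokes.
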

\begin{proof}
We omit the proof, because it is similar to the proof of \cite[Prop. 6.23]{KapWiygul}. 
\end{proof}

\begin{definition}
\label{duv}
Given $(E, E^\partial) \in  C^{0, \beta}_{\sym}(M) \times C^{1, \beta}_{\sym}(\partial_{\Sph^2} M)$, we define
$u_\cat \in C^{2, \beta}_{\sym}(M)$, 
$E_{\D}\in C^{0, \beta}_{\sym}(M)$, 
$E_{\D}^\partial \in C^{1, \beta}_{\sym}(\partial_{\Sph^2} M)$,
$u_\D \in C^{2, \beta}_{\sym}(M)$, 
and
$\mu_E \in \R$
by
\begin{equation}
\label{Evcat}
\begin{gathered}
u_\cat : = \Psicat \kappa^{*-1} v_{\cat}, \quad 
\text{where} \quad 
v_{\cat} : = 
\Rcalhat_{\cat} \left( \kappa^* \Psicat ( \rho^2 E |_{\Sigmacat} ), \kappa^* \Psicat(\rho E^\partial|_{\partial_{\Sph^2} \Sigmacat})\right), \\
E_{\D} : = E -  \Psicat^2 ( E|_{\Sigmacat} ) - [\Lcal_{M}, \Psicat] \kappa^{*-1} v_{\cat},  \\ 
E^{\partial}_{\D} : = E^{\partial} - \Psicat^2 ( E^\partial |_{\partial_{\Sph^2}\Sigmacat} ) - [\partial_{\sigma}, \Psicat] \kappa^{*-1} v_{\cat}|_{\partial_{\Sph^2} M},\\
u_{\D} : = \Psidisk \pi^{*} v_{\D}, 
\quad
\text{where}
\quad
(v_\D, \mu_E) : = \Rcalhat_{\D}( \pi^{*-1} E_\D, \pi^{*-1} E^\partial_\D).
\end{gathered}
\end{equation}
\end{definition}

\begin{definition}
\label{dRappr}
We define a linear map
\begin{align*}
\Rcal^{appr}_M : C^{0, \beta}_{\sym}(M) \times C^{1, \beta}_{\sym}(\partial_{\Sph^2} M)
\rightarrow 
C^{2, \beta}_{\sym}(M) \times \R
\end{align*}
by taking $\Rcal^{appr}_M (E, E^\partial) := (u_\cat+u_\D, \mu_E)$, 
where $u_\cat$, $u_\D$,  and $\mu_E$ were defined in \ref{duv}. 
\end{definition}

\subsection*{The main proposition}
$\phantom{ab}$
\nopagebreak

\begin{prop}[Solvability of the linearized problem on the initial surface]
\label{Plinear}
There is a linear map 
\begin{align*}
\Rcal_{M} : C^{0, \beta}_{\sym}(M) \times C^{1, \beta}_{\sym}(\partial_{\Sph^2} M)
\rightarrow 
C^{2, \beta}_{\sym}(M) \times \R
\end{align*}
such that if $(E, E^\partial)$ belongs to the domain of $\Rcal_{M}$ and $(u, \mu) = \Rcal_{M} (E, E^\partial)$, the following hold.
\begin{enumerate}[label = \emph{(\roman*)} ]
\item $\Lcal_M u = E + \mu(w \circ \pi)$.
\item $\Bcal_{M} u|_{\partial_{\Sph^2} M} = E^{\partial}$.
\item $\partial_\eta u |_{\partial M \setminus \partial_{\Sph^2} M} = 0$. 
\item $\|u \|_{2, \beta, \gamma; M} + |\mu|   \leq C ( \| E \|_{0, \beta, \gamma-2; M}+ 
\| E^\partial \|_{1, \beta, \gamma-1; \partial_{\Sph^2} M})$.
\end{enumerate}
\end{prop}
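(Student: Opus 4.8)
The plan is to show that the operator $\Rcal^{appr}_M$ of \ref{dRappr} is a right inverse for the boundary value problem \emph{(i)--(iii)} up to an error whose operator norm tends to $0$ with $\tau$, and then to promote it to an exact right inverse $\Rcal_M$ by a geometric (Neumann) series. Concretely, abbreviating $\|(E,E^\partial)\|:=\|E\|_{0,\beta,\gamma-2;M}+\|E^\partial\|_{1,\beta,\gamma-1;\partial_{\Sph^2}M}$, the first step is to prove that there is $c>0$ (a small fixed multiple of $\min(\alpha,1-\alpha)$) so that whenever $(u,\mu_E)=\Rcal^{appr}_M(E,E^\partial)$ one has
\begin{align*}
\|\Lcal_M u-E-\mu_E(w\circ\pi)\|_{0,\beta,\gamma-2;M}+\|\Bcal_M u|_{\partial_{\Sph^2}M}-E^\partial\|_{1,\beta,\gamma-1;\partial_{\Sph^2}M}\le C\tau^{c}\,\|(E,E^\partial)\|,
\end{align*}
together with $\partial_\eta u|_{\partial M\setminus\partial_{\Sph^2}M}=0$ exactly and the a priori bound $\|u\|_{2,\beta,\gamma;M}+|\mu_E|\le C\,\|(E,E^\partial)\|$. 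Granting this, define $\Phi(E,E^\partial):=\big(E+(\Lcal_M u-E-\mu_E(w\circ\pi)),\ E^\partial+(\Bcal_M u|_{\partial_{\Sph^2}M}-E^\partial)\big)$ with $(u,\mu_E)=\Rcal^{appr}_M(E,E^\partial)$; then $\Phi-\id$ has operator norm $\le C\tau^{c}\le\tfrac12$ for $\tau$ small, so $\Phi$ is invertible with $\|\Phi^{-1}\|\le2$, and $\Rcal_M:=\Rcal^{appr}_M\circ\Phi^{-1}$ satisfies \emph{(i)}, \emph{(ii)} and \emph{(iv)} (reading off the two components and using the a priori bound). Property \emph{(iii)} is automatic, since the first component of $\Rcal^{appr}_M(\,\cdot\,,\cdot\,)$ always satisfies the Neumann condition on $\partial M\setminus\partial_{\Sph^2}M$, hence so does $\Rcal_M(\,\cdot\,,\cdot\,)=\Rcal^{appr}_M(\Phi^{-1}(\,\cdot\,,\cdot\,))$.

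For the approximate-inverse estimate I would work with the decomposition of \ref{duv}, $u=u_\cat+u_\D$ with $u_\cat=\Psicat\kappa^{*-1}v_\cat$ and $u_\D=\Psidisk\pi^*v_\D$, and commute $\Lcal_M$ past the localization cutoffs of \ref{dloc}. For the catenoidal piece, \ref{pclin}(i) gives $\Lcalhatcat v_\cat=\kappa^*\Psicat(\rho^2 E|_{\Sigmacat})$ exactly, while \ref{Lopdiff}(i)(a) bounds the difference between $\Lcal_M\kappa^{*-1}$ and $\rho^{-2}\kappa^{*-1}\Lcalhatcat$ by $C\tau^{2\alpha}$ times the $C^{2,\beta,\gamma}$ norm; together with \ref{pclin}(iii) and the multiplicative property \eqref{Enormmult} (to absorb the factor $\rho^2$) this yields $\Psicat\Lcal_M\kappa^{*-1}v_\cat=\Psicat^2(E|_{\Sigmacat})+O(\tau^{2\alpha})\|(E,E^\partial)\|$. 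For the disk piece, \ref{pdlin}(i) gives $\Lcal_\D v_\D=\pi^{*-1}E_\D+\mu_E w$, and \ref{Lopdiff}(ii)(a) controls $\Lcal_M\pi^*-\pi^*\Lcal_\D$ by $C\tau^{2(1-\alpha)}|\log\tau|^2$ times the $C^{2,\beta,\gamma}$ norm, so with \ref{pdlin}(iv) one gets $\Psidisk\Lcal_M\pi^*v_\D=\Psidisk\big(E_\D+\mu_E(w\circ\pi)\big)+O(\tau^{2(1-\alpha)}|\log\tau|^2)\|(E,E^\partial)\|$. The definitions of $E_\D$ in \ref{duv} and of the cutoff thresholds ($a-2,a-1$ for $\Psicat$, $\delta'/8,\delta'/7$ for $\Psidisk$, and $\delta=1/100$ for $\vunder$ in \ref{dobs}) are arranged so that, after the commutators $[\Lcal_M,\Psicat]\kappa^{*-1}v_\cat$ and $[\Lcal_M,\Psidisk]\pi^*v_\D$ are restored, the leading contributions telescope to $E+\mu_E(w\circ\pi)$: $E_\D$ is supported in $\{\dbold_p>\delta'/8\}$ so that $\Rcalhat_\D$ applies, and $\Psidisk=\id$ both on $\supp\big(E-\Psicat^2(E|_{\Sigmacat})\big)$ and on $\supp(w\circ\pi)$ (the latter sitting at $\dbold_p$ of order one by \ref{dobs}), while the residual commutator terms, supported in the thin collars where the cutoffs transition, are estimated by a positive power of $\tau$ using the weighted norms. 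The boundary estimate for $\Bcal_M u-E^\partial$ is entirely parallel, using \ref{pclin}(ii), \ref{pdlin}(ii), \ref{Lopdiff}(i)(b) and (ii)(b), the identity $\Bcal_M=\partial_\sigma+1$ along $\partial_{\Sph^2}M$, and the definition of $E^\partial_\D$.

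For property \emph{(iii)}, note that $\partial M\setminus\partial_{\Sph^2}M$ is the union of the image of $\partial\Dp\cap\ell$ (which lies on $\Pprime$) and the half-circle waist of the catenoidal bridge (which lies on $P$, at $\dbold_p$ of order $\tau\ll\delta'$). On the first, $u_\cat$ vanishes, $\Psidisk$ acts as the identity, and by \ref{Lbprop}(ii) the surface $M$ is near $\Pprime$ a $\gaux$-graph over a rotated copy of $\Dp$ with $\Pprime$ being $\gaux$-totally geodesic (cf. \ref{Lpresym}), so $\pi$ is compatible with the conormal along $\partial\Dp\cap\ell$ and $\partial_\eta u=\pi^*(\partial_\eta v_\D)=0$ there by \ref{pdlin}(iii). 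On the waist, $u_\D$ vanishes and $u=\kappa^{*-1}v_\cat$, the conormal is $\pm\kappa_*\partial_t$, and the condition amounts to $\partial_t v_\cat|_{t=0}=0$; this holds because $\Rcalhat_\cat$ can be taken to preserve parity in $t$ (the reflection $\Rcapunder_P$ on the model catenoid) and the data fed to it is even in $t$ — in the first correction step $M$ is $\Rcapunder_P$-symmetric near the bridge (the bridge is part of an honest catenoid and the deformation $\ucir$ of \ref{dinit} is even in $t$, with $\Thetacir$ even by \ref{Lcatest}(viii)), and the evenness is preserved by the iteration. Combined with \ref{LMpertang} this is exactly what \emph{(iii)} asserts for the produced $u$.

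The technical heart — and the step I expect to be the main obstacle — is the bookkeeping in the second paragraph: one must verify that the several cutoff thresholds are nested so that the leading terms cancel exactly and that the leftover commutators genuinely carry a positive power of $\tau$ in the weighted norms, so that $\Phi-\id$ is a contraction. This is the analogue of the corresponding estimate in \cite{KapWiygul}, to which the present situation reduces after the simplifications noted in the Introduction (the mismatch has only a constant term, so no first-harmonic obstructions on the catenoidal half-circles need be tracked, and there is no tilting to account for); the one genuinely new feature, the obstruction term $\mu_E(w\circ\pi)$, enters only through \ref{pdlin}, and since $|\mu_E|$ is controlled by the data it does not affect the contraction argument.
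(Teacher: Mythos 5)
Your proposal is correct and takes essentially the same route as the paper: your map $\Phi$ is exactly the paper's $L\,\Rcal^{appr}_M$, the approximate-inverse error is estimated via \ref{Lopdiff}, \ref{pclin}, \ref{pdlin} together with the cutoff commutators, and the exact inverse is $\Rcal_M=\Rcal^{appr}_M(L\,\Rcal^{appr}_M)^{-1}$, with the smallness $C(\tau^{2\alpha}+\tau^{2(1-\alpha)}|\log\tau|^2+\tau^{\alpha})$ matching the paper's bound. Your extra discussion of (iii) (Neumann on $\Pprime$ via \ref{pdlin}(iii) and at the waist via even extension in $t$ and a parity-preserving $\Rcalhat_{\cat}$) is in fact more explicit than the paper's treatment, which leaves this point implicit.
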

\begin{proof}
It follows from \eqref{Evcat} and Proposition \ref{pclin} that
\begin{equation}
\label{Euk}
\begin{aligned}
\Lcal_M u_{\cat} &= \Psicat^2 ( E|_{\Sigmacat} ) + [\Lcal_{M}, \Psicat] \kappa^{*-1} v_{\cat}\\
&\phantom{=}+ \Psicat \kappa^{*-1} ( \kappa^* \Lcal_{M} \kappa^{*-1} -\rho^{-2} \Lcalhatcat) v_{\cat} 
\quad \text{and} \\
\partial_{\sigma} u_{\cat}|_{\partial_{\Sph^2} M} 
&= \Psicat^2 ( E^\partial |_{\partial_{\Sph^2}\Sigmacat} ) + [\partial_{\sigma}, \Psicat] \kappa^{*-1} v_{\cat}|_{\partial_{\Sph^2} M}\\
&\phantom{=}+ \Psicat \kappa^{*-1} ( \kappa^* \partial_{\sigma}\kappa^{*-1} +\rho^{-1}(\sgn\vartheta)\partial_{\vartheta}) v_{\cat}|_{\partial_{\Sph^2} M}.
\end{aligned}
\end{equation}

Next, from \eqref{Evcat}, and \ref{dRappr}, it follows that
\begin{equation}
\label{Eud}
\begin{aligned}
\Lcal_{M} u_{\D} &= \Psidisk (E_{\D}|_{\Sigmadisk}) + \mu_E \pi^*w +  [\Lcal_{M}, \Psidisk]\pi^* v_{\D}\\
&\phantom{=}+ \Psidisk \pi^*( \pi^{*-1} \Lcal_{M} \pi^* - \Lcal_{\D}) v_{\D}
\quad \text{and} \\
\partial_{\sigma} u_\D |_{\partial_{\Sph^2} M} 
&= \Psidisk ( E^\partial_{\D}|_{\partial_{\Sph^2} \Sigmadisk}) + 
[\partial_{\sigma}, \Psidisk]\pi^* v_{\D}|_{\partial_{\Sph^2} M}\\
&\phantom{=}+\Psidisk \pi^* ( \pi^{*-1} \partial_{\sigma}\pi^* +\partial_{\rr}) v_{\D}|_{\partial_{\Sph^2} M}
- u_{\D}|_{\partial_{\Sph^2} M}.
\end{aligned}
\end{equation}

We now define $L : C^{2, \beta}_{\sym}(M) \times \R \rightarrow C^{0, \beta}_{\sym}\times C^{1, \beta}_{\sym}(\partial_{\Sph^2} M)$ by
\begin{align*}
L(u, \mu) = (\Lcal_M u - \mu w \circ \pi, \Bcal_{M} u|_{\partial_{\Sph^2} M}).
\end{align*}
It follows from \ref{dloc}, \eqref{Evcat}, \eqref{Euk}, and \eqref{Eud} that
\begin{align}
L \Rcal^{appr}_M( E, E^\partial) = (\Etilde, \Etilde^\partial),
\end{align}
where
\begin{equation}
\label{Elinerr}
\begin{aligned}
\Etilde &= E + \Psicat \kappa^{*-1}( \kappa^* \Lcal_M \kappa^{*-1} - \rho^{-2}\Lcalhatcat) v_{\cat}\\
&\phantom{=}+
\Psidisk \pi^*(\pi^{*-1} \Lcal_M \pi^* - \Lcal_{\D}) v_{\D} + [\Lcal_M, \Psidisk]\pi^* v_{\D},
\end{aligned}
\end{equation}
and
\begin{equation}
\label{Elinerrb}
\begin{aligned}
\Etilde^\partial &= E^\partial + 
\Psicat \kappa^{*-1} ( \kappa^* \partial_{\sigma}\kappa^{*-1} +\rho^{-1}(\sgn\vartheta)\partial_{\vartheta}) v_{\cat}|_{\partial_{\Sph^2} M}\\
&\phantom{=} + \Psidisk \pi^* ( \pi^{*-1} \partial_{\sigma}\pi^* + \partial_{\rr}) v_\D|_{\partial_{\Sph^2} M}
+ [\partial_{\sigma}, \Psidisk]\pi^* v_\D|_{\partial_{\Sph^2} M} + u_{\cat}|_{\partial_{\Sph^2} M}.
\end{aligned}
\end{equation}
Using \ref{Lopdiff}, \ref{pclin}, and \ref{pdlin} to estimate \eqref{Elinerr} and \eqref{Elinerrb}, we obtain that
\begin{multline*}
\| \Etilde - E\|_{0, \beta, \gamma-2; M} 
+
\| \Etilde^\partial - E^\partial \|_{1, \beta, \gamma-1; \partial_{\Sph^2} M}
\leq\\
\leq
C
( \tau^{2\alpha}+ \tau^{2(1-\alpha)}|\log \tau|^2 + \tau^\alpha)
\left(
\| E\|_{0, \beta, \gamma-2; M}
+
\| E^\partial \|_{1, \beta, \gamma-1; \partial_{\Sph^2} M}
\right).
\end{multline*}
Thus $L \Rcal^{appr}_M$ is a small perturbation of the identity operator on $C^{0,\beta}_{\sym}(M) \times C^{1, \beta}_{\sym}(\partial_{\Sph^2} M)$ and hence is invertible.  Setting
\begin{align*}
\Rcal_M : = \Rcal^{appr}_M(L \Rcal^{appr}_M)^{-1}
\end{align*}
concludes the proof of (i)-(iii), and the estimates in (iv) follow from the estimates completed above. 
\end{proof}

\section[The main theorem]{The main theorem}
\label{S:main}

\subsection*{The nonlinear terms} 
$\phantom{ab}$
\nopagebreak

 Using rescaling, we prove the following global estimate for the nonlinear terms of the mean curvature of $M_\phi$. 

\begin{lemma}
\label{Lquad}
If $M$ is an initial surface as in \ref{dinit} and $\phi \in C^{2, \beta}(M)$ satisfies $\| \phi \|_{2, \beta, \gamma; M} \leq \tauunder^{1+ \alpha/4}$ (recall \ref{dnorm}), then $M_\phi$ is a well-defined embedded surface.  Moreover, if $H_\phi$ denotes the mean curvature of $M_\phi$ pulled back to $M$ by $X^{\R^3, \gaux}_{M, \phi \nu}$, and $H$ is the mean curvature of $M$, then
\begin{align}
\label{Equad}
\| H_\phi - H - \Lcal_M \phi \|_{0,\beta, \gamma-2; M} 
\leq 
C 
\tauunder^{\gamma - 1}\| \phi \|^2_{2, \beta, \gamma; M}.
\end{align}
\end{lemma}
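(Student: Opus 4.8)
The plan is to prove the estimate \eqref{Equad} by a rescaling argument which reduces the nonlinear mean curvature estimate to a fixed scale where the standard second-order Taylor expansion of the mean curvature operator applies with uniform constants. First I would note that, since $M_\phi$ is defined as a graph in the auxiliary metric $\gaux$ over $M$ with normal displacement $\phi\nu$, the well-definedness and embeddedness of $M_\phi$ under the smallness hypothesis $\|\phi\|_{2,\beta,\gamma;M}\le\tauunder^{1+\alpha/4}$ follow from the fact that this bound, combined with the geometry of $M$ controlled in \ref{Pinit} (in particular the bound $\|A:C^k(M,\rho,g)\|\le C(k)\tau|\log\tau|$ and the coordinate/injectivity estimates), keeps $\phi$ far below the relevant geometric scales everywhere; I would phrase this in terms of the weighted norm from \ref{dnorm} and the natural scale $\rho$, so that the pointwise displacement near a point $x\in M$ is small compared to $\rho(x)$.

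For the quantitative estimate, the key point is to work at the natural scale. Fix a point $x\in M$ and rescale the ambient metric so that $\rho(x)$ becomes of order one; in this rescaled picture $M$ looks, on the ball $B_x$ of \ref{dwHolder}, like a surface with uniformly bounded geometry (uniformly bounded second fundamental form, uniform coordinates, bounded injectivity radius in the rescaled metric), because these are exactly what the weighted estimates in \ref{Pinit} encode. The mean curvature of a normal graph over a surface of uniformly bounded geometry is a smooth function of the $2$-jet of the graphing function, equal to $H + \Lcal_M\phi$ plus an error which, by Taylor's theorem, is bounded by a constant times the square of the $C^{2,\beta}$-norm of the graphing function at that scale, provided that norm is below a fixed threshold. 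Translating back through the scaling: at the point $x$ the graphing function in rescaled coordinates has $C^{2,\beta}$-norm comparable to $\rho(x)^{-1}$ times (a rescaled version of) $\phi$, so the quadratic error in the rescaled mean curvature is $\le C\rho(x)^{-2}\cdot(\rho(x)^{-1}\|\phi\|)^2$ in the appropriate local norm, i.e. after undoing the scaling the contribution to $\|H_\phi-H-\Lcal_M\phi\|_{0,\beta,\gamma-2;M}$ at $x$ is controlled by $C\rho(x)^{\gamma}\cdot\rho(x)^{-3}\|\phi\|^2$-type terms; taking the supremum over $x$ and using $\gamma=\tfrac12$ from \ref{cholder} together with $\rho\le 1$ on $M\subset\B^3$ and $\tauunder$ being the smallest relevant scale, one collects the bound $C\tauunder^{\gamma-1}\|\phi\|^2_{2,\beta,\gamma;M}$.

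I would organize this as: (1) a lemma-free paragraph checking well-definedness/embeddedness via \ref{Pinit} and the smallness of $\phi$; (2) the rescaling setup, invoking \ref{dwHolder} and the bounded-geometry estimates \ref{Pinit}(ii)--(vii) and \ref{Lmcirest}, noting that the auxiliary metric $\gaux$ is uniformly equivalent to $g$ on each rescaled ball so the graph construction behaves as in the Euclidean model; (3) the fixed-scale Taylor expansion of the mean curvature functional, citing the standard expansion into linear part $\Lcal_M$ plus a quadratically-bounded remainder (as already used, e.g., in the proofs of \ref{Lbase}(v) and \ref{Lscird}(v)); and (4) reassembling the weighted norm. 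The main obstacle, and the step requiring the most care, is bookkeeping the powers of $\rho$ through the rescaling so that the remainder lands in the weight class $r^{\gamma-2}$ with the claimed prefactor $\tauunder^{\gamma-1}$ rather than a worse power of $\tauunder$; in particular one must use that the worst-case scale is $\rho\sim\tau\sim\tauunder$ on the catenoidal region $\Sigmacat$ (where $\rho$ is smallest) and that there the bounded-geometry estimates \ref{Pinit}(ii),(iv),(vi) are uniform, so that no extra negative power of $\tau$ beyond $\tauunder^{\gamma-1}=\tauunder^{-1/2}$ is incurred. The other regions ($\Sigmad$ and the gluing region) are easier since there $\rho$ is bounded below by a fixed multiple of $\delta'$ or by a constant, so the quadratic error there is in fact far smaller than the claimed bound.
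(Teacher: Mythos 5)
Your overall strategy is exactly the paper's: fix $q\in M$, blow up by the natural scale $\rho(q)$, use the bounded-geometry estimates of \ref{Pinit} to get uniform control of the rescaled ambient metric $\rho^{-2}(q)\gaux$ and of the rescaled second fundamental form on the unit ball $B$, apply the fixed-scale Taylor expansion of the mean curvature of a normal graph with quadratically bounded remainder, and then reassemble the weighted norm; the well-definedness and embeddedness of $M_\phi$ come from the smallness of the rescaled graphing function, as you say. So there is no difference in route.

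However, the exponent bookkeeping you actually display is inconsistent and, as written, does not deliver the stated bound — and since the whole content of the lemma is the power $\tauunder^{\gamma-1}$, this is the step that has to be exact. The correct count (the paper's) is: by \ref{dwHolder} and \ref{dnorm}, the rescaled graphing function satisfies $\|\rho^{-1}(q)\phi : C^{2,\beta}(B,\rho^{-2}(q)g)\| \le C\rho^{\gamma-1}(q)\|\phi\|_{2,\beta,\gamma;M}$; the Taylor remainder of the \emph{rescaled} mean curvature, namely $\rho(q)\bigl(H_\phi-H-\Lcal_M\phi\bigr)$, is bounded in $C^{0,\beta}(B,\rho^{-2}(q)g)$ by the square of this, i.e.\ by $C\rho^{2\gamma-2}(q)\|\phi\|^2_{2,\beta,\gamma;M}$; dividing by the weight $\rho^{\gamma-2}(q)$ (equivalently multiplying the unrescaled error by $\rho^{2-\gamma}(q)$) gives a pointwise contribution $C\rho^{\gamma-1}(q)\|\phi\|^2_{2,\beta,\gamma;M}$, and then one uses $\rho\ge c\,\tauunder$ together with $\gamma-1<0$ — not $\rho\le 1$, which goes the wrong way — to conclude. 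Your intermediate expressions $C\rho^{-2}\bigl(\rho^{-1}\|\phi\|\bigr)^2$ and ``$C\rho^{\gamma}\cdot\rho^{-3}\|\phi\|^2$-type terms'' carry two extra negative powers of $\rho$; taking the supremum of those over $M$, where $\rho$ is as small as $c\,\tauunder$ on the catenoidal region, would only yield a bound of order $\tauunder^{\gamma-3}\|\phi\|^2$, which is far weaker than \eqref{Equad} and would in fact be useless in the fixed-point argument of \ref{Tmain} (there one needs the quadratic error of a $\phi$ with $\|\phi\|_{2,\beta,\gamma;M}\le\tauunder^{1+\alpha/4}$ to be $O(\tauunder^{(3+\alpha)/2})$, which fails with the weaker power). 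So the idea is right and matches the paper, but you must redo the scaling arithmetic as above to actually land on $\tauunder^{\gamma-1}$.
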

\begin{proof}
Fix any $q \in M$, and write $B$ for the geodesic ball in $(M, \rho(q)^{-2} g)$ with center $q$ and radius 1.  Clearly for each nonnegative integer $k$ there is a constant $C(k)$ such that the first $k$ $\rho^{-2}(q)g$ covariant derivatives of $\rho^{-2}(q) \gaux$ are, as measured by $\rho^{-2}(q) g$, bounded by $C(k)$.  Moreover, by \ref{Pinit}, we can choose $C(k)$ so that it also bounds the first $k$ covariant derivatives of the second fundamental form in $(\R^3, g)$ of $\rho^{-1}(q) X|_B$, the inclusion map of $B$ blown up by a factor of $\rho^{-1}(q)$.

By the definition of the norm and since $\| \phi \|_{2, \beta, \gamma; M} \leq \tauunder^{1+\alpha/4}$, we have that the restriction of $\phi$ on $B$ satisfies
\begin{align*}
\| \rho^{-1}(q) \phi : C^{2, \beta}(B, \rho^{-2}(q) g)\| 
\leq
C \rho^{\gamma-1}(q) \| \phi \|_{2, \beta, \gamma; M}.
\end{align*}
Since the right hand side is small in absolute terms, we conclude that $X^{\R^3, \gaux}_{M, \phi \nu}$ is well-defined on $B$, its restriction to $B$ is an embedding, and using scaling for the left hand side that
\begin{align*}
\| \rho(q) (H_\phi - H - \Lcal_M \phi) : C^{0, \beta}( B, \rho^{-2}(q)g )\| 
\leq 
C \rho^{2\gamma - 2}(q) \| \phi \|^2_{2, \beta, \gamma; M}.
\end{align*}
We conclude that
\begin{align*}
\rho^{2- \gamma}(q) \| (H_\phi - H - \Lcal_M \phi) : C^{0, \beta}( B, \rho^{-2}(q)g )\| 
\leq
C \rho^{\gamma -1}(q) \| \phi\|^2_{2, \beta, \gamma; M}.
\end{align*}
From this, the estimate in the statement of the lemma follows by using the definitions.
\end{proof}

\subsection*{The fixed point theorem}
$\phantom{ab}$
\nopagebreak 

\begin{lemma}[Diffeomorphisms $\Fzeta$]
\label{Ldiffeo}
There exists a family of $\groupdisk$-equivariant diffeomorphisms $\Fzeta : M[0] \rightarrow M[\zeta]$ which depend continuously on $\zeta$ and have the property that for any $u \in C^{2, \beta}_{\sym}(M[\zeta])$, 
\begin{align}
\label{Enormcomp}
\| u \|_{2, \beta, \gamma; M[\zeta]} \Sim_{2} \| u \circ \Fzeta\|_{2, \beta, \gamma; M[0]}. 
\end{align}
\end{lemma}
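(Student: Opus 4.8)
The plan is to build the diffeomorphisms $\Fzeta$ essentially by ``transporting'' the construction of the initial surfaces continuously in $\zeta$, and then to observe that the weighted norm in \ref{dnorm} is defined intrinsically enough that it transforms with a controlled factor under these maps. Recall from \ref{dinit} that $M[\zeta] = \Xcir_{\ucir}[\zeta](\Mcir[\zeta])$ and from \ref{dpreinit} that $\Mcir[\zeta]$ is, away from $p$, a graph over $\Sigma$ (a fixed base surface, independent of $\zeta$) of the function $\varphigl[\zeta]$, while near $p$ it is a fixed piece $\catbr^+[p,\tau]$ of a catenoid of size $\tau = \tau[\zeta,\varpi]$. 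Since $\tau[\zeta,\varpi] = e^{\zeta/\varpi}\tauunder$ depends smoothly and monotonically on $\zeta$, the model catenoidal piece, parametrized via $\kappacir$ over $\cyla^+$ (see \ref{Lcatdiff}), depends smoothly on $\zeta$; and the graph piece depends smoothly on $\zeta$ through $\varphigl[\zeta]$, which in turn depends smoothly on $\zeta$ through $\varphi\llbracket\zeta;\varpi\rrbracket = \tau G + \varpi v$, $\Mcal\varphi$, and $\tau$. First I would therefore define $\Fzeta$ on the graph region by sending the point of $M[0]$ with base point $q\in\Sigma$ to the point of $M[\zeta]$ with the same base point $q$ (composed with the two normal-graph deformations $\Xcir_{\ucir}$), and on the catenoidal region by sending $\kappacir[0](t,\vartheta)$-image points to $\kappacir[\zeta](t,\vartheta)$-image points; one checks these agree on the overlap (the gluing region) because both are described by the $\Psibold$-transition in \ref{dpreinit} with the same cut-off in $\dbold_p$, and that the resulting map is a $\groupdisk$-equivariant diffeomorphism depending continuously on $\zeta$. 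Equivariance is immediate since every ingredient ($\Sigma$, $G$, $v$, $\vunder$, $\vhat$, $\kappacir$, $\ucir$) is $\groupdisk$-invariant or equivariant by \ref{Lbprop}, \ref{Lvmismatch}, \ref{dvhat}, \ref{Lpresym}, \ref{Lpm}.

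Next I would prove the norm comparison \eqref{Enormcomp}. The weighted norm $\|u\|_{2,\beta,\gamma;M[\zeta]} = \|u:C^{2,\beta}(M[\zeta],\rho,g,\rho^\gamma)\|$ from \ref{dnorm} is, by \ref{dwHolder}, a supremum over $x$ of local $C^{2,\beta}$ norms in the rescaled metric $\rho^{-2}(x)g$, divided by $\rho^\gamma(x)$. Under $\Fzeta$ the point $x\in M[0]$ with $\rho$-value $\rho_0(x)$ is sent to a point with $\rho$-value $\rho_\zeta(\Fzeta(x))$, and the key geometric facts are: (a) $\rho_\zeta\circ\Fzeta$ and $\rho_0$ are comparable with a constant arbitrarily close to $1$ (on the catenoidal part, $\rho = \tau\cosh t$ in both cases and $\tau[\zeta,\varpi]/\tauunder = e^{\zeta/\varpi} = 1 + O(1/|\log\varpi|)$ since $|\zeta/\varpi|\le C/|\log\varpi|$ by \eqref{Etau}; on the graph part, $\rho$ is the $\Sigma$-distance from $p$ up to errors controlled by \ref{Lscird}(i) and \ref{Pinit}(iii)), and (b) the pulled-back metric $\Fzeta^*(\rho_\zeta^{-2}g)$ differs from $\rho_0^{-2}g$ by a factor arbitrarily close to $1$ in $C^k$, again using \eqref{Etau}, \ref{Pinit}(ii)-(iii), \ref{Lcatest}(i) and the fact that the difference in the defining data is $O(1/|\log\varpi|)$-small after rescaling. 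Granting (a) and (b), each local $C^{2,\beta}$ norm in the rescaled metric, and each weight factor $\rho^\gamma$, changes by a factor in $(1/\sqrt2,\sqrt2)$ once $\varpi$ is small enough (recall \ref{contheta} allows $\varpi$ as small as needed), which yields $\Sim_2$ as in \ref{Dsimc}.

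The steps, in order: (1) write down $\Fzeta$ piecewise (graph region, catenoidal region), using the explicit parametrizations of \ref{dpreinit} and \ref{Lcatdiff}, and check it is well-defined and smooth across the gluing region; (2) verify it is a diffeomorphism (the normal-graph deformations $\Xcir_{\ucir}$ are diffeomorphisms onto their images by \ref{Pinit}(i) and \ref{Lpm}, and the base-to-base identification is the identity on $\Sigma$, the $(t,\vartheta)$-to-$(t,\vartheta)$ identification is the identity on $\cyla^+$) and $\groupdisk$-equivariant; (3) verify continuous dependence on $\zeta$ from the continuous (indeed smooth) dependence of $\tau[\zeta,\varpi]$, $\varphi\llbracket\zeta;\varpi\rrbracket$, $\Mcal\varphi$, $\ucir[\zeta]$ on $\zeta$; (4) establish (a) and (b) above and deduce \eqref{Enormcomp}. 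The main obstacle I expect is purely bookkeeping in step (4): one must track how the weighted-norm building blocks (the base point map on $\Sigma$, the normal graph functions $\varphigl$ and $\ucir$, and the two $\Xcir_{\ucir}$ deformations) compose, and confirm that \emph{all} the $\zeta$-dependent discrepancies are controlled by the single small quantity $\zeta/\varpi = O(1/|\log\varpi|)$ rather than by $\tau$ or $\delta'$; once that is seen, the argument is a routine application of the estimates in \ref{Pinit}, \ref{Lcatest}, and \ref{Lscird} together with the multiplicative property \eqref{Enormmult}. Since the paper only needs the soft conclusion \eqref{Enormcomp} with the fixed constant $2$ (for the subsequent fixed-point argument), there is no need for sharp constants and the estimates can be carried out crudely.
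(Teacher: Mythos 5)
Your overall route is the same as the paper's (identify points by base point on the graph region, by cylindrical coordinates on the bridge, then compare norms using the smallness of $\zeta/\varpi$), but there is a genuine gap at the heart of step (1): the claim that the two piecewise definitions ``agree on the overlap'' is false, and without repairing it the map $\Fzeta$ is not well-defined. The catenoidal pieces of $M[0]$ and $M[\zeta]$ have different waist sizes, $\tauunder$ versus $\tau[\zeta,\varpi]=e^{\zeta/\varpi}\tauunder$, and are truncated at different radii, since $\delta'=\tau^{\alpha}$ also depends on $\zeta$; hence the cylinder parametrizations live on intervals $[0,a[0]]$ and $[0,a[\zeta]]$ of different lengths, so ``same $(t,\vartheta)$'' is not even a bijection between $\Sigmacircat[0]$ and $\Sigmacircat[\zeta]$. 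Moreover, on the gluing annulus the same-$(t,\vartheta)$ map sends a point with $\rho=\tauunder\cosh t$ to a point with $\rho=e^{\zeta/\varpi}\tauunder\cosh t$, whereas the same-base-point map (the one given by $\picir[0]^{-1}\circ\picir[\zeta]$) preserves $\rho$ up to the much smaller graphical corrections; these two maps differ by the nontrivial factor $e^{\zeta/\varpi}$ in the radial direction and by a shift of order $|\log\tau[\zeta]-\log\tauunder|=|\zeta|/\varpi$ in the $t$-coordinate, so they do not coincide on any overlap, regardless of the fact that both surfaces are built with a $\Psibold$-transition in $\dbold_p$ (whose cutoff radii $2\delta',3\delta'$ are themselves $\zeta$-dependent).

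The paper resolves exactly this point: it first rescales the cylinder coordinate linearly, $T[\zeta](t,\vartheta)=(t\,a/\aunder,\vartheta)$ with $\aunder=a[0]$, so that the two catenoidal regions are put in bijection; it writes the base-point identification in cylinder coordinates as a second map $T'[\zeta]$ defined on $\cyl^+_{[a-2,a-1]}$; and it interpolates between the two with the cutoff $\Psibold[a-2,a-1;t](T,T')$ before conjugating back by $\kappacir$ and composing with the deformations $\Xcir_{\ucir}$. The norm comparison \eqref{Enormcomp} then rests on $\aunder\Sim_{1+C/|\log\varpi|^{2}}a$ (equivalently on $|\zeta|/\varpi\le C/|\log\varpi|$), which is the quantitative content your step (4) correctly anticipates. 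So your estimate discussion is in line with the paper, but your construction needs the rescaling-plus-gluing device (or some equivalent interpolation between the two identifications); as written, the piecewise map in your step (1) does not exist.
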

\begin{proof}
We first define a family of diffeomorphisms $\Fzetacir : \Sigmacir[0] \rightarrow \Sigmacir[\zeta]$ identifying the pre-initial surfaces.  We set $\aunder : = a[0]$, the value of the quantity $a$ defined in \eqref{Ea}, when $\zeta =0$.  The map $T[\zeta] : \cyl^+_{[0, \aunder]} \rightarrow \cyl^+_{[0, a]}$ defined by
\begin{align*}
T[\zeta] (t, \vartheta)  = (t a/\aunder, \vartheta)
\end{align*}
is clearly a diffeomorphism. 
On the other hand, if we set
\begin{align*}
A[\zeta] &: = \Sigmacir \setminus ( \kappacir[\zeta]( \cyl^+_{[0, a-2]})) \subset \Sigmacird, \\
B[\zeta] &: = \Sigmacir \setminus ( \kappacir[\zeta]( \cyl^+_{[0, a-1]})) \subset A[\zeta], \\
C[\zeta] &: = \Sigmacircat \cap ( A[\zeta] \setminus B[\zeta]) \subset \Sigmacircat\cap \Sigmacird,
\end{align*}
then, provided $\varpi$ is small enough, 
\begin{align*}
\picir [ \zeta](A) \subset \picir[0] ( \Sigmacird[0])
\quad \text{and} 
\quad
\picir [ \zeta](C) \subset \picir[0] (\Sigmacircat[0] \cap \Sigmacird),
\end{align*}
so that the maps $\Fcalcir'[\zeta]: A \rightarrow \Sigmacird[0]$ and 
$T'[\zeta] :  \cyl^+_{[a-2, a-1]} \rightarrow \cyl^+_{[0, \aunder]}$
defined by
\begin{align*}
\Fcalcir'[\zeta] : = \picir[0]^{-1} \circ \picir[\zeta]
\quad 
\text{and}
\quad
T'[\zeta]:= 
\kappacir[0]^{-1}[\zeta] \circ \Fcalcir'[\zeta] \circ \kappacir[\zeta]
\end{align*}
are well-defined and diffeomorphisms onto their images.
We now glue together the maps $T$ and $T'$ to obtain a smooth diffeomorphism $\Ttilde [\zeta] : \cyl^+_{[0, a-1]} \rightarrow \cyl^+_{[0, \aunder]}$ by defining
\begin{align*}
\Ttilde[\zeta] : = \Psibold[a-2, a-1; t ]( T, T').
\end{align*}
Finally, we define $\Fzetacir : \Mcir[\zeta] \rightarrow \Mcir[0]$ to be the unique map having the restrictions
\begin{align*}
\Fzetacir |_{B[\zeta]} : = \Fcalcir'[\zeta] 
\quad \text{and}
\quad
\Fzetacir|_{\kappacir[\zeta](\cyl^+_{[0, a-1]})} := \kappacir[\zeta]\circ \Ttilde \circ \kappacir[\zeta]^{-1}.
\end{align*}
This concludes the definition of $\Pcir[\zeta]$.  We then define
\begin{align*}
\Fzeta : = \Xcir_{\ucir[\zeta]}[\zeta] \circ \Fzetacir \circ \Xcir_{\ucir[0]}[0]^{-1},
\end{align*}
where $\Xcir_{\ucir} : \Mcir \rightarrow M$ was defined in \ref{dinit}. 

Using \eqref{Etau} and \eqref{Ea}, it is easy to check that
\begin{align}
\aunder \Sim_{1+C/|\log \varpi|^2} a.
\end{align}
Using this, it is not difficult to see that \eqref{Enormcomp} holds by taking $\varpi$ small enough.
\end{proof}

\begin{theorem}[Main theorem]
\label{Tmain}
There exists $\varpi_0>0$ such that for any $\varpi \in (0, \varpi_0]$,  there exist
\begin{align*}
 \zetabreve \in [-\tauunder, \tauunder]
 \quad
 \text{and}
 \quad
 \phibreve \in C^{\infty}_{\sym}(M[\zetabreve])
 \quad
 \text{satisfying}
 \quad
 \| \phibreve \|_{2,\beta, \gamma; M[\zetabreve]} \leq \tauunder^{1+\alpha/4},
 \end{align*}
 such that the perturbation $\Mbreve : = ( M [ \zetabreve])_{\phibreve}$ satisfies the following properties. 
\begin{enumerate}[label = \emph{(\roman*)} ]
\item $\Mbreve \subset \B^3$  is an embedded $\groupdisk$-invariant minimal disk.
\item $\partial \Mbreve \subset  \partial \B^3 \cup P\cup \Pprime$.
\item $\Mbreve \cup \Rcapunder_P \Mbreve$ and $\Mbreve \cup \Rcapunder_{\Pprime} \Mbreve$ are smooth disks with corners. 
\item $\Mbreve$ meets $\partial \B^3$ orthogonally along $\partial_{\Sph^2} \Mbreve$.
\end{enumerate}
\end{theorem}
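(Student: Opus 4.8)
The plan is a standard linearized--doubling fixed point argument: for each parameter value $\zeta$, correct the initial surface $M[\zeta]$ to minimality modulo the one--dimensional obstruction produced by $\Rcal_{M}$, and then choose $\zeta$ so that this obstruction vanishes. Fix $\varpi\in(0,\varpi_0]$ with $\varpi_0$ to be taken small, recall $\gamma=\tfrac12$, and recall $\tauunder=\tauunder[\varpi]$ and $\tau[\zeta,\varpi]$ from \eqref{Etau}. For $\zeta\in[-\tauunder,\tauunder]$ write $M=M[\zeta]$ and $\varphi=\varphi\llbracket\zeta;\varpi\rrbracket$, and for $\phi\in C^{2,\beta}_{\sym}(M)$ let $Q_\phi:=H_\phi-H-\Lcal_M\phi$ be the nonlinear part of the mean curvature of $M_\phi$ as in \ref{Lquad}, where $H$ is the mean curvature of $M$. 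Define the map $\Jcal_\zeta:\phi\mapsto u$, where $(u,\mu)=\Rcal_{M}\big(-(H-(\Mcal\varphi)\,w\circ\pi)-Q_\phi,\;0\big)$ via \ref{Plinear}, and write $\mu_\zeta(\phi):=\mu$.

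First I would show that $\Jcal_\zeta$ is a contraction of the ball $\mathcal{B}_\zeta:=\{\phi\in C^{2,\beta}_{\sym}(M):\|\phi\|_{2,\beta,\gamma;M}\le\tauunder^{1+\alpha/4}\}$ into itself. By \ref{LglobalH}, $\|H-(\Mcal\varphi)\,w\circ\pi\|_{0,\beta,\gamma-2;M}\le\tauunder^{1+\alpha/3}$; by \ref{Lquad}, $\|Q_\phi\|_{0,\beta,\gamma-2;M}\le C\tauunder^{\gamma-1}\|\phi\|^2_{2,\beta,\gamma;M}\le C\tauunder^{3/2+\alpha/2}$ on $\mathcal{B}_\zeta$. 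Hence \ref{Plinear}(iv) gives $\|\Jcal_\zeta(\phi)\|_{2,\beta,\gamma;M}+|\mu_\zeta(\phi)|\le C(\tauunder^{1+\alpha/3}+\tauunder^{3/2+\alpha/2})$, which is $\le\tauunder^{1+\alpha/4}$ once $\varpi$ is small, so $\Jcal_\zeta$ maps $\mathcal{B}_\zeta$ to itself. Since $\Rcal_{M}$ is linear, $\Jcal_\zeta(\phi_1)-\Jcal_\zeta(\phi_2)$ is the first component of $\Rcal_{M}(-(Q_{\phi_1}-Q_{\phi_2}),0)$, and the same rescaling argument as in \ref{Lquad} also gives $\|Q_{\phi_1}-Q_{\phi_2}\|_{0,\beta,\gamma-2;M}\le C\tauunder^{\gamma-1}(\|\phi_1\|+\|\phi_2\|)\|\phi_1-\phi_2\|_{2,\beta,\gamma;M}$, so by \ref{Plinear}(iv) the Lipschitz constant of $\Jcal_\zeta$ on $\mathcal{B}_\zeta$ is $\le C\tauunder^{1/2+\alpha/4}\le\tfrac12$ for $\varpi$ small. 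Thus $\Jcal_\zeta$ has a unique fixed point $\phi[\zeta]\in\mathcal{B}_\zeta$, with $\mu[\zeta]:=\mu_\zeta(\phi[\zeta])$ satisfying $|\mu[\zeta]|\le C\tauunder^{1+\alpha/3}$. Using the diffeomorphisms $\mathcal{F}_\zeta$ of \ref{Ldiffeo} and the norm equivalence \eqref{Enormcomp} to view all quantities on the fixed surface $M[0]$, the data defining $\Jcal_\zeta$ depend continuously on $\zeta$, so by uniformity of the contraction $\zeta\mapsto\phi[\zeta]$ and $\zeta\mapsto\mu[\zeta]$ are continuous.

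Next I would choose $\zeta$. By \ref{Plinear}(i), the fixed point satisfies $\Lcal_M\phi[\zeta]=-(H-(\Mcal\varphi)\,w\circ\pi)-Q_{\phi[\zeta]}+\mu[\zeta]\,(w\circ\pi)$, so the mean curvature of $M[\zeta]_{\phi[\zeta]}$ equals $H+\Lcal_M\phi[\zeta]+Q_{\phi[\zeta]}=\big(\Mcal\varphi\llbracket\zeta;\varpi\rrbracket+\mu[\zeta]\big)\,(w\circ\pi)$. Set $g(\zeta):=\Mcal\varphi\llbracket\zeta;\varpi\rrbracket+\mu[\zeta]$, continuous on $[-\tauunder,\tauunder]$. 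By the prescribed mismatch estimate \ref{Lpmis}, $\Mcal\varphi\llbracket-\tauunder;\varpi\rrbracket\ge\tfrac12\tauunder$ and $\Mcal\varphi\llbracket\tauunder;\varpi\rrbracket\le-\tfrac12\tauunder$, while $|\mu[\zeta]|\le C\tauunder^{1+\alpha/3}\le\tfrac14\tauunder$ for $\varpi$ small; hence $g(-\tauunder)>0>g(\tauunder)$ and the intermediate value theorem gives $\zetabreve\in[-\tauunder,\tauunder]$ with $g(\zetabreve)=0$. Put $\phibreve:=\phi[\zetabreve]$ and $\Mbreve:=(M[\zetabreve])_{\phibreve}$; then $\|\phibreve\|_{2,\beta,\gamma;M[\zetabreve]}\le\tauunder^{1+\alpha/4}$, and by the formula above $\Mbreve$ has vanishing mean curvature. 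From \ref{Plinear}(ii)--(iii), $\Bcal_{M}\phibreve|_{\partial_{\Sph^2}M}=0$ and $\partial_\eta\phibreve|_{\partial M\setminus\partial_{\Sph^2}M}=0$ for $M=M[\zetabreve]$; combining \ref{Lpm} with \ref{LMpertang} then yields $\partial\Mbreve\subset\partial\B^3\cup P\cup\Pprime$, that $\Mbreve$ meets $\partial\B^3$ orthogonally along $\partial_{\Sph^2}\Mbreve$, and that $\Mbreve$ meets the totally geodesic planes $P,\Pprime$ orthogonally along the remaining two boundary arcs. Reflection across $P$ or $\Pprime$ therefore extends $\Mbreve$ smoothly, which together with the four right--angled corners at $\partial_{\Sph^2}\Mbreve\cap(P\cup\Pprime)$ gives (ii)--(iv); and since $\Mbreve$ is a minimal surface satisfying these free boundary conditions, interior and boundary elliptic regularity together with the smooth reflection upgrade $\phibreve$ to $C^\infty_{\sym}$, while its smallness keeps $\Mbreve$ an embedded disk and $\group$--invariance is inherited from working in $C^{2,\beta}_{\sym}$, giving (i).

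The main obstacle is that $\Rcal_{M}$ of \ref{Plinear} inverts the linearized free boundary problem only modulo the one--dimensional obstruction $w\circ\pi$, so no fixed base configuration can be corrected all the way to minimality; the obstruction must be absorbed by moving $\zeta$, which controls the singularity strength $\tau$, the mismatch $\Mcal\varphi$, and the dominant term $(\Mcal\varphi)\,w\circ\pi$ of the mean curvature of $M[\zeta]$. The prescribed mismatch estimate \ref{Lpmis} is exactly what makes the exchange work: over $\zeta\in[-\tauunder,\tauunder]$ the mismatch $\Mcal\varphi\llbracket\zeta;\varpi\rrbracket$ sweeps an interval of length comparable to $\tauunder$, whereas the obstruction coefficient $\mu[\zeta]$ produced by the fixed point is only $O(\tauunder^{1+\alpha/3})$, smaller by a definite power of $\tauunder$, so $g$ changes sign. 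The other delicate point, already built into the cited lemmas, is that with the weight $\gamma=\tfrac12$ the nonlinear error of \ref{Lquad} is strictly subcritical and is dominated by the right--hand side of \ref{Plinear}(iv); granting these inputs, the remainder is the routine linearized--doubling fixed point scheme.
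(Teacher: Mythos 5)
Your argument is correct, but it runs the endgame differently from the paper. You fix $\zeta$, solve the equation modulo the obstruction by a Banach contraction in the ball $\|\phi\|_{2,\beta,\gamma;M}\le\tauunder^{1+\alpha/4}$, and then remove the obstruction coefficient $g(\zeta)=\Mcal\varphi\llbracket\zeta;\varpi\rrbracket+\mu[\zeta]$ by an intermediate value argument in $\zeta$, using \ref{Lpmis} to get the sign change at $\zeta=\pm\tauunder$ and the smallness $|\mu[\zeta]|\le C\tauunder^{1+\alpha/3}$ to keep it. The paper instead sets up a single Schauder fixed point on the product $B=\{\|u\|_{2,\beta,\gamma;M[0]}\le\tauunder^{1+\alpha}\}\times[-\tauunder,\tauunder]$, with the map $\Jcal(u,\zeta)=(u_Q\circ\Fzeta,\ \zeta+\Mcal\varphi+\mu_H+\mu_Q)$; there \ref{Lpmis} enters only to show $\Jcal(B)\subset B$, and compactness (Ascoli--Arzel\`a into $C^{2,\beta'}$) replaces any contraction estimate. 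The trade-off is real: your route requires a Lipschitz version of the quadratic estimate, $\|Q_{\phi_1}-Q_{\phi_2}\|_{0,\beta,\gamma-2;M}\le C\tauunder^{\gamma-1}(\|\phi_1\|+\|\phi_2\|)\|\phi_1-\phi_2\|$, which is not literally stated in \ref{Lquad} but does follow from the same blow-up argument, and it requires continuous dependence of the contraction fixed point $(\phi[\zeta],\mu[\zeta])$ on $\zeta$ via \ref{Ldiffeo} and continuity of $\Rcal_{M[\zeta]}$ in $\zeta$ --- the latter being exactly the continuity the paper also invokes for its Schauder map, so neither approach escapes it. What your version buys is uniqueness of the corrected graph for each $\zeta$ and a more transparent role for \ref{Lpmis} (sign change of the mismatch dominating the $O(\tauunder^{1+\alpha/3})$ obstruction); what the paper's version buys is that no Lipschitz control of the nonlinearity is needed at all. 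Your treatment of (i)--(iv) --- boundary conditions from \ref{Plinear}(ii)--(iii) and \ref{LMpertang}, embeddedness from the smallness of $\phibreve$ via \ref{Lquad}, symmetry from working in the sym spaces, and smoothness by elliptic regularity plus reflection --- matches the paper's, and in fact you are slightly more explicit about the regularity upgrade to $C^\infty$ than the paper is.
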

\begin{proof}
The proof is based on finding a fixed point for a map $\Jcal$ we will define shortly.  Define 
\begin{align}
\label{EB}
B : = \left\{ u \in C^{2, \beta}_{\sym}(M[0]) : \| u \|_{2, \beta, \gamma; M} \leq \tauunder^{1+\alpha}\right\} \times [- \tauunder, \tauunder],
\end{align}
and suppose $(u, \zeta) \in B$ is given.  Use \ref{Plinear} to define $(u_H, \mu_H): = -\Rcal_{M[\zeta]}( H -  (\Mcal \varphi) w \circ \pi, 0)$. Define also $\phi \in C^{2, \beta}_{\sym}(M[\zeta])$ by $\phi = u \circ \Fzeta^{-1} + u_H$.  We then have:
\begin{enumerate}
\item $\Lcal_M u_H +H=  ( \Mcal \varphi + \mu_H) w\circ \pi$. 
\item By \ref{LglobalH}, \ref{Plinear}, \ref{Ldiffeo}, and the size of $u$ in \eqref{EB}, 
	$|\mu_H|+ \| \phi \|_{2, \beta, \gamma; M[\zeta]} \leq \tauunder^{1+\alpha/4}.$
\item Using \ref{Plinear} again, we define $(u_Q, \mu_Q) : = - \Rcal_{M[\zeta]}(H_\phi - H - \Lcal_M \phi, 0)$.  We then have
	\begin{equation}
	\label{EHtilde}
	\begin{gathered}
		\Lcal_M (u_Q - u \circ \Fzeta^{-1}) + H_{\phi} = ( \Mcal \varphi + \mu_H + \mu_Q) w \circ \pi, \\
	\Bcal_M \phi|_{\partial_{\Sph^2} M} = 0.
	\end{gathered}
	\end{equation}
\item Moreover by (2), \ref{Plinear}, and \ref{Lquad}, $\| u_Q\|_{2, \beta, \gamma; M[\zeta]}+ |\mu_Q| \leq C \tauunder^{(3+\alpha)/2}$. 
\end{enumerate}

We then define $\Jcal : B \rightarrow C^{2, \beta}_{\sym} (M[0]) \times \R$ by 
\begin{align}
\label{EJcal}
\Jcal ( u, \zeta) = \left(u_Q \circ \Fzeta, \zeta + \Mcal \varphi+ \mu_H + \mu_Q\right).
\end{align}

We are now ready for the fixed-point argument. 
By using \ref{Lpmis}, \ref{Ldiffeo}, and (2) and (4) above, it is straightforward to check that $\Jcal(B) \subset B$.  Clearly $B$ is convex.  Let $\beta' \in (0, \beta)$.  The inclusion $B \hookrightarrow C^{2, \beta'}_{\sym}(M[0])\times \R$ is compact by the Ascoli-Arzela theorem.  By inspecting the proof of \ref{Plinear}, it is easy to see that $\Rcal_{M[\zeta]}$ depends continuously on $\zeta$.  From this and the definitions, it is easy to check that $\Jcal$ is a continuous map in the induced topology.  By Schauder's fixed point theorem \cite{gilbarg}, there is a fixed point $(\breve{u}, \breve{\zeta})$ of $\Jcal$.  Using \eqref{EJcal} and the fixed point property, we see that $u_Q = u \circ \Fzeta^{-1}$ and $\Mcal \breve{\varphi} + \breve{\mu}_H + \breve{\mu}_Q = 0$, where we use ``$\breve{\phantom{a}}$'' to denote the various quantities for $(v, \zeta) = (\breve{v}, \breve{\zeta})$.  

By \eqref{EHtilde} we conclude the minimality of $\breve{M}$.  The embeddedness of $\Mbreve$ follows from \ref{Lquad}, and the symmetries of $\Mbreve$ in (i) as well as items (ii)-(iii) follow from combining \ref{LMpertang}, \ref{Plinear}, and \ref{Lpm} with the definition of $\phi$ above.  Finally, (iv) follows from combining the boundary condition in \ref{EHtilde} with \ref{LMpertang}(iii). 
\end{proof}

\begin{corollary}[Immersed free boundary annuli]
\label{Cmain}
Fix $\varpi \in (0, \varpi_0]$ and $\Mbreve$ as in \ref{Tmain}.  Then
\begin{align*}
\Mhat = \grouptheta \Mbreve
\end{align*}
is a smooth free boundary minimal surface immersed in $\B^3$.  Moreover, if $\varpi$ is a rational multiple of $\pi$, then $\Mhat$ is a properly immersed compact annulus.  
On the other hand, if $\varpi/ \pi$ is irrational, then $\Mhat$ is a noncompact and nonproperly immersed strip. 
\end{corollary}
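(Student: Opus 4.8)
The plan is to realize $\Mhat$ as the image of a smooth minimal immersion $\iota\colon\widehat{\Sigma}\to\B^3$ built by iterating the Schwarz reflection principle, and then to identify the diffeomorphism type of $\widehat{\Sigma}$ from the group $D:=\langle\Rcapunder_P,\Rcapunder_{\Pprime}\rangle$. Note first that $\Rcapunder_{\ell^\perp}$ is central in $\grouptheta$ (it acts as $+1$ on the rotation plane $\ell^\perp$ and fixes $\ell$ pointwise, while every element of $D$ fixes $\ell$ pointwise), so, since $\Mbreve$ is $\groupdisk$-invariant by \ref{Tmain}(i), we have $\grouptheta\Mbreve=D\,\Mbreve$. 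I would take $\widehat{\Sigma}$ to be the surface obtained from a disjoint union of copies $\{\Mbreve_g\}_{g\in D}$ of $\Mbreve$ by gluing, for each $g$, the side $\Mbreve_g\cap P$ to the like side of $\Mbreve_{g\Rcapunder_P}$ and the side $\Mbreve_g\cap\Pprime$ to the like side of $\Mbreve_{g\Rcapunder_{\Pprime}}$, each via the identity map of the model $\Mbreve$; and let $\iota$ restrict to $x\mapsto g\cdot x$ on $\Mbreve_g$. This is consistent because $\Rcapunder_P$ and $\Rcapunder_{\Pprime}$ fix $P$ and $\Pprime$ pointwise, so the two descriptions in $\B^3$ of a glued arc agree.

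Since $P$, $\Pprime$ and $\Sph^2$ are totally geodesic in Euclidean $3$-space, and by \ref{Tmain}(ii)--(iv) $\Mbreve$ meets $P$ and $\Pprime$ orthogonally along the corresponding sides and meets $\Sph^2$ orthogonally along $\partial_{\Sph^2}\Mbreve$, the reflection principle together with \ref{Tmain}(iii) shows (after translating by an element of $O(3)$) that along each glued arc the union of the two adjacent copies is a smooth minimal surface; iterating, $\iota$ is a smooth minimal immersion. Each of the four corners of a copy $\Mbreve_g$ lies on exactly one of the sides $\Mbreve_g\cap P$, $\Mbreve_g\cap\Pprime$ and on one $\Sph^2$-side, hence is shared by exactly two copies meeting there with total angle $\tfrac\pi2+\tfrac\pi2=\pi$, so the corners become smooth boundary points and $\widehat{\Sigma}$ is a smooth surface with boundary, with $\iota(\partial\widehat{\Sigma})\subset\Sph^2$ and $\iota$ meeting $\Sph^2$ orthogonally. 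Finally, the unit normal of $\Mbreve$ is tangent to $P$ (resp.\ $\Pprime$) along $\Mbreve\cap P$ (resp.\ $\Mbreve\cap\Pprime$) and hence is fixed by the reflection used to glue there; choosing signs consistently, the normals match across all gluings, so $\widehat{\Sigma}$ is two-sided in $\R^3$, in particular orientable. This establishes the first assertion.

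For the topology, the combinatorial scheme of $\widehat{\Sigma}$ --- a complex of $|D|$ quadrilaterals, each glued to two others along the two sides $\Mbreve\cap P$ and $\Mbreve\cap\Pprime$, which are opposite sides of $\Mbreve$ (the two $\Sph^2$-sides lie between them) --- has dual graph the Cayley graph of $D$ with respect to $\{\Rcapunder_P,\Rcapunder_{\Pprime}\}$. Now $\Rcapunder_{\Pprime}\Rcapunder_P$ is the composition of the reflections in two planes through $\ell$ with dihedral angle $\tfrac\pi2+\varpi$, hence equals $\Rcap^{\pi+2\varpi}_\ell$ up to orientation; this rotation has finite order $n$ exactly when $\tfrac{\pi+2\varpi}{2\pi}\in\Q$, i.e.\ when $\varpi/\pi\in\Q$, in which case $D$ is the dihedral group of order $2n$ and its Cayley graph is a $2n$-cycle, while for $\varpi/\pi\notin\Q$, $D$ is infinite dihedral and the Cayley graph is a bi-infinite path. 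In the first case $\widehat{\Sigma}$ is a cyclic band of $2n$ quadrilaterals glued along opposite sides; it is a connected, orientable, compact surface with nonempty boundary and $\chi(\widehat{\Sigma})=2n-2n=0$ ($2n$ disks glued along $2n$ arcs), hence an annulus, and it is properly immersed because it is compact. In the second case $\widehat{\Sigma}$ is a bi-infinite such band, diffeomorphic to $\R\times[0,1]$, hence a noncompact strip.

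It remains to see that $\iota$ is not proper when $\varpi/\pi\notin\Q$. Away from its small catenoidal bridge, which lies near $p\in\Sph^2$, $\Mbreve$ is $C^1$-close to the half-disk $\Rcap^\varpi_\ell\D^+$, whose diameter $\ell\cap\B^3$ passes through the center $0$ of $\B^3$; hence $\Mbreve\cap\overline{B(0,1/2)}$ has area at least some fixed $c>0$. Since $0\in\ell$ is fixed by every $g\in D$ and the $g$ act by isometries, $g\Mbreve\cap\overline{B(0,1/2)}$ has area at least $c$ for every $g$, so $\iota^{-1}(\overline{B(0,1/2)})$ has infinite area and cannot be compact, whereas $\overline{B(0,1/2)}\subset\B^3$ is compact; thus $\iota$ is not proper. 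In this scheme the only genuinely delicate steps are the group-theoretic dichotomy, the identification of $\widehat{\Sigma}$ as a cyclic or bi-infinite band of quadrilaterals, and the area estimate for nonproperness; the construction of $\iota$ and the verification of its remaining properties are a routine iterated application of the reflection principle and \ref{Tmain}.
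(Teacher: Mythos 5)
Your proof is correct and follows the same route the paper gestures at: the dichotomy that $D=\langle\Rcapunder_P,\Rcapunder_{\Pprime}\rangle$ (hence $\grouptheta$) is finite precisely when $\varpi/\pi\in\Q$, iterated Schwarz reflection across $P$, $\Pprime$, $\Sph^2$ using \ref{Tmain}(ii)--(iv), and identification of the domain topology. The paper simply declares the topological and properness conclusions ``clear''; your explicit quadrilateral-band model of $\widehat{\Sigma}$, the Euler-characteristic computation, and the area bound for nonproperness in the irrational case are all correct fillings-in of exactly that assertion.
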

\begin{proof}
The smoothness of $\Mhat$ follows immediately from the smoothness of $\Mbreve$ and \ref{Tmain}(iii).  By construction $\partial \Mhat \subset \partial \B^3$, and by combining this with \ref{Tmain}(i) and (iv) it follows that $\Mhat$ is a free boundary minimal surface.  Finally the remaining assertions are clear, since $\grouptheta$ is a finite group if and only if $\varpi$ is a rational multiple of $\pi$. 
\end{proof}


\appendices
\section*{Appendices}

\section{Graphs and the auxiliary metric}
\label{Sgraphs2}
\nopagebreak

\begin{definition}[Vector fields and sliding]
\label{Dvslide}
Given an open set $U \subset P$ and a vector field $V$ defined on a domain containing $U$, we define $\DDD_V : U \rightarrow P$ by $\DDD_V : = \exp^{P, g} \circ V|_{U}$.  We also define $\Utilde_V : = U \cap \DDD_V(U)$.
\end{definition}

\begin{lemma}
\label{LVmvt}
If $U, V$, and $\DDD_V$ are as in \ref{Dvslide} and $f \in C^\infty(\Utilde_V)$, then 
\begin{align*}
	\| f \circ \DDD_V - f : C^k(\Utilde_V)\|  \leq
	C(k) \| f: C^{k+1}(U, g)\| \| V : C^k( U, g)\|.
\end{align*}
If $\DDD_V$ is moreover a diffeomorphism and $\| V : C^k(U, g)\|$ is small enough, then additionally
\begin{align*}
	\| f \circ \DDD^{-1}_V - f : C^k(\Utilde_V)\|  \leq
	C(k) \| f: C^{k+1}(U, g)\| \| V : C^k( U, g)\|.
\end{align*}
\end{lemma}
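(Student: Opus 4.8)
The statement is a purely local Taylor-expansion estimate comparing $f\circ\DDD_V$ (respectively $f\circ\DDD_V^{-1}$) with $f$, where $\DDD_V=\exp^{P,g}\circ V$ is the ``sliding'' map along the vector field $V$. Since $P$ is just flat $\R^2$ with its standard metric, $\exp^{P,g}_q(v)=q+v$, so $\DDD_V(q)=q+V(q)$. The first estimate is then the elementary fact that for a $C^{k+1}$ function $f$ and a point $q$ with $q+V(q)\in\dom(f)$, the difference $f(q+V(q))-f(q)$ and all its derivatives up to order $k$ are controlled by $\|f:C^{k+1}\|\,\|V:C^k\|$; this is just the fundamental theorem of calculus along the segment $t\mapsto q+tV(q)$ together with the product/chain rule applied to the $k$-th derivative of $q\mapsto f(q+V(q))-f(q)$. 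I would write $f(q+V(q))-f(q)=\int_0^1 (Df)(q+tV(q))\cdot V(q)\,dt$, differentiate under the integral sign, and bound each term; the smallness of $\|V:C^k\|$ is not needed here (only that the segment stays in $U$, which is implicit in the hypothesis $\Utilde_V=U\cap\DDD_V(U)$).

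For the second estimate I would reduce it to the first. If $\|V:C^k(U,g)\|$ is small enough and $\DDD_V$ is a diffeomorphism, then $\DDD_V^{-1}$ is itself of the form $q\mapsto q+W(q)$ for a vector field $W$ on $\DDD_V(U)$, and one has the bound $\|W:C^k\|\le C(k)\|V:C^k\|$. This follows by writing $W=-V\circ\DDD_V^{-1}$ (from $\DDD_V\circ\DDD_V^{-1}=\id$, i.e.\ $q = \DDD_V^{-1}(q)+V(\DDD_V^{-1}(q))$, so $W(q)=\DDD_V^{-1}(q)-q=-V(\DDD_V^{-1}(q))$) and then estimating the composition $V\circ\DDD_V^{-1}$ by an inverse-function-theorem argument: the $C^k$ norm of $\DDD_V^{-1}$ minus the identity is controlled by that of $\DDD_V$ minus the identity via a standard fixed-point / implicit-function estimate, provided the latter is small. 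Applying the first part of the lemma with $W$ in place of $V$ then gives $\|f\circ\DDD_V^{-1}-f:C^k(\Utilde_V)\|\le C(k)\|f:C^{k+1}(U,g)\|\,\|W:C^k\|\le C(k)\|f:C^{k+1}(U,g)\|\,\|V:C^k(U,g)\|$, as claimed.

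**Main obstacle.** The only nonroutine point is the estimate $\|\DDD_V^{-1}-\id:C^k\|\le C(k)\|\DDD_V-\id:C^k\|$ underlying the second inequality: bounding higher derivatives of an inverse diffeomorphism requires either an induction on $k$ using the formula for the derivative of an inverse (Faà di Bruno / a graded Neumann series) or an abstract inverse-function theorem in $C^k$ with quantitative control. This is standard but is where the ``$\|V:C^k\|$ small enough'' hypothesis is genuinely used. Everything else is the fundamental theorem of calculus plus the chain and product rules, so I would present it briefly and not belabor the computations.
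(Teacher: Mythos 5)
Your proposal is correct and takes essentially the same route as the paper: the paper's proof is the single sentence ``This is a consequence of the mean value theorem and a straightforward induction argument,'' which is exactly your FTC-along-the-segment plus chain/product-rule induction. You go further in isolating where the smallness hypothesis enters (the quantitative $C^k$ inverse-function estimate needed to bound $W=-V\circ\DDD_V^{-1}$), which the paper leaves implicit; that is the right thing to flag and is consistent with the paper's intent.

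One small point worth being aware of, though it affects the paper's statement as much as your write-up: the first estimate, as stated, is only \emph{linear} in $\|V:C^k\|$ if one also knows $\|V:C^k\|$ is bounded, since the Fa\`a di Bruno expansion of $D^k(f\circ\DDD_V)$ produces terms with several factors of derivatives of $V$ (e.g.\ $D^2f(\DDD_V)(DV,DV)$ for $k=2$), which are higher powers of $\|V:C^k\|$. In the paper's applications $V$ is always small (it comes from \ref{Luauxw}(i) with $u$ small by \ref{Auaux}), so this causes no trouble, but a fully self-contained statement of the lemma would include a boundedness hypothesis on $V$ in the first part as well.
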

\begin{proof}
This is a consequence of the mean value theorem and a straightforward induction argument.
\end{proof}

\begin{assumption}
\label{Auaux}
We now assume given the following:
\begin{enumerate}[label=(\roman*)]
\item A domain $U \subset P \setminus D_0(2/3)$.
\item A function $u \in C^\infty(U)$ with $\| u : C^k(U, g)\|$ as small as needed in absolute terms.
\end{enumerate}
\end{assumption}

\begin{lemma}
\label{Luauxw}
Given $u$ as in \ref{Auaux}, there is a vector field $V$ on $U$ uniquely determined by $\DDD_V = \Pi^{\gaux}_P \circ  X^{\R^3,g}_{U, u \nu}$ and a function $w : \DDD_V(U) \rightarrow \R$ uniquely determined by $X_{U, u\nu}^{\R^3, g} = X^{\R^3,\gaux}_{\DDD_V(U), w \nu} \circ \DDD_V$.  In other words, the diagram
 \begin{equation}
\label{ediag3}
\begin{tikzcd}
\R^3 \arrow[r, rightarrow, "\id"] 
               & \R^3 \arrow[d, rightarrow, "\Pi^{\gaux}_P"']\\
U \arrow[r, rightarrow, "{\DDD_V}"] \arrow[u, rightarrow, "X^{\R^3, g}_{U, u \nu }"] & \DDD_V(U) \arrow[u, bend right=25, pos=.45,  "X^{\R^3, \gaux}_{\DDD_V(U), w \nu}"']
\end{tikzcd}
\end{equation} 
commutes. Moreover, the following hold.
\begin{enumerate}[label=\emph{(\roman*)}]
\item $\| V : C^k(U, g) \| \leq C(k) \| u : C^k(U, g)\|^2.$
\item $w =\Omega \cdot  \sin^{-1} \circ ( \frac{1}{\Omega} \cdot u \circ \DDD^{-1}_V)$.
\item $\| w - u : C^k(\Utilde_V, g) \| \leq C(k) \| u : C^{k+1}(\Utilde_V, g)\| \| u: C^{k}(\Utilde, g)\|^2$.
\end{enumerate}
\end{lemma}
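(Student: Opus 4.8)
The plan is to reduce the statement to an explicit computation inside a totally geodesic plane for $\gaux$ on which $\gaux$ becomes flat. Here and below $\nu$ denotes the Euclidean unit normal field of $P$. Fix $q\in U$ and let $Q\subset\R^3$ be the two-plane through the origin spanned by $q$ and $\nu$. Since $\Omega$ depends only on $\dbold^{\R^3}_0$, the gradient $\nabla\log\Omega$ is radial, so its normal derivative vanishes along any plane through the origin; hence $P$ and $Q$, being totally geodesic for $g$, are totally geodesic for $\gaux$ as well. Also $\Rcapunder_Q$ is a $\gaux$-isometry fixing $Q$ pointwise and preserving $P$ (as $Q$ contains the normal line $\R\nu$ to $P$). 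Finally, on the region $\{\dbold^{\R^3}_0\ge 2/3\}$, which contains $U$ and where $\Omega=(\dbold^{\R^3}_0)^{-1}$, the restriction $\gaux|_Q$ equals $ds^2+d\vartheta^2$ in the coordinates $s=\log\dbold^{\R^3}_0$ and $\vartheta$ (the polar angle on $Q$), so $\gaux|_Q$ is flat; in these coordinates $P\cap Q$ is the union of the two lines $\vartheta=0$ and $\vartheta=\pi$. Consequently, over the part of $P$ in $\{\dbold^{\R^3}_0\ge 2/3\}$ the $\gaux$-normal exponential map of $P$ is a diffeomorphism onto a tubular neighborhood of uniform $\gaux$-width (somewhat less than $\pi/2$), on which $\Pi^{\gaux}_P$ and the associated normal coordinate are smooth.

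I would then establish existence, uniqueness, and the commutativity of \eqref{ediag3}. Because $\|u:C^1(U,g)\|$ is small and $U\subset P\setminus D_0(2/3)$, the point $\bar x:=X^{\R^3,g}_{U,u\nu}(q)=q+u(q)\nu$ lies in $Q$, satisfies $\dbold^{\R^3}_0(\bar x)=\sqrt{\dbold^{\R^3}_0(q)^2+u(q)^2}\ge 2/3$, and lies in the tubular neighborhood above; hence $\DDD_V(q):=\Pi^{\gaux}_P(\bar x)$ is well-defined, depends smoothly on $q$ and indeed only on the pair $(q,u(q))$, lies in $Q$ by the $\Rcapunder_Q$-symmetry, and is close to $q$. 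As $P$ is flat, $V(q):=\DDD_V(q)-q$ is the unique vector field with $\DDD_V=\exp^{P,g}\circ V$, and $\DDD_V$ is a diffeomorphism onto $\DDD_V(U)$. The $\gaux$-geodesic realizing the projection $\DDD_V(q)=\Pi^{\gaux}_P(\bar x)$ meets $P$ orthogonally at $\DDD_V(q)$, so its velocity there is a multiple of $\nu_{\DDD_V(q)}$; thus there is a unique $w(\DDD_V(q))\in\R$ with $\exp^{\R^3,\gaux}_{\DDD_V(q)}\big(w(\DDD_V(q))\,\nu_{\DDD_V(q)}\big)=\bar x$, and $w$ is smooth on $\DDD_V(U)$ since it is read off from the normal coordinate of the tubular neighborhood. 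This gives precisely the commutativity of \eqref{ediag3} and determines $w$ uniquely.

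For the closed form (ii) and the estimates I work in the coordinates $(s,\vartheta)$ on $Q$. There $\bar x$ has $\vartheta$-coordinate $\arctan\!\big(u(q)/\dbold^{\R^3}_0(q)\big)$ and the same $s$-coordinate as $\DDD_V(q)$, whose $\vartheta$-coordinate is $0$, and the geodesic joining them is the segment $\{s=\mathrm{const}\}$, along which $\dbold^{\R^3}_0$ stays $\ge 2/3$. Reading off the displacement yields $V(q)=\big(\sqrt{1+u(q)^2/\dbold^{\R^3}_0(q)^2}-1\big)q$, and carrying $\nu$ through the conformal factor $\Omega$ along that segment yields (ii). Item (i) then follows because $V$ is a pointwise smooth function of $(q,u(q))$ vanishing to second order in $u(q)$ — write $V=u^2\,G(\cdot,u)$ with $G$ smooth — and $(\dbold^{\R^3}_0)^{-1}$ with its derivatives is bounded on $U$, so $\|V:C^k(U,g)\|\le C(k)\|u:C^k(U,g)\|^2$ once $\|u:C^1(U,g)\|$ is small. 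For (iii), Taylor-expanding $\sin^{-1}$ in (ii) gives $w=u\circ\DDD_V^{-1}+(\text{terms cubic and higher in }u\circ\DDD_V^{-1}\text{ with bounded coefficients})$ on $\Utilde_V$; the higher-order terms are controlled in $C^k(\Utilde_V,g)$ by $C(k)\|u:C^k(\Utilde_V,g)\|^3$, while \ref{LVmvt} together with item (i) gives $\|u\circ\DDD_V^{-1}-u:C^k(\Utilde_V,g)\|\le C(k)\|u:C^{k+1}(U,g)\|\,\|V:C^k(U,g)\|\le C(k)\|u:C^{k+1}(U,g)\|\,\|u:C^k(U,g)\|^2$, and adding these proves (iii).

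The substantive step is the first one: identifying $\gaux|_Q$ with the flat cylinder metric and checking that $\bar x$, its $\gaux$-nearest point on $P$, and the connecting geodesic never leave $Q$ nor the region $\{\dbold^{\R^3}_0\ge 2/3\}$ where this description holds. Granting that, (ii) is plane trigonometry and (i), (iii) are routine expansions plus an application of \ref{LVmvt}.
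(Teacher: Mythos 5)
Your proposal is correct and follows essentially the same route as the paper: the flat-cylinder description of $\gaux$ on $\{\dbold^{\R^3}_0\ge 2/3\}$ that you work out in the $(s,\vartheta)$ coordinates is exactly what the paper encodes in the closed-form geodesic formula \eqref{Egauxexp}, and from it both arguments read off $V(q)=q\bigl(\sqrt{1+u^2(q)/|q|^2}-1\bigr)$, deduce (i) from the quadratic vanishing in $u$, obtain (ii) by inverting the normal $\gaux$-exponential, and prove (iii) by Taylor-expanding $\sin^{-1}$ and combining with \ref{LVmvt} and (i). The extra verifications you include (total geodesy of $P$ and $Q$ for $\gaux$, the $\Rcapunder_Q$-symmetry, and that the projecting geodesic stays in $\{\dbold^{\R^3}_0\ge 2/3\}$) are just the justifications the paper leaves implicit.
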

\begin{proof}
By the definitions, we have $X^{\R^3, g}_{U, u\nu }(p) = p + u(p) \nu(p)$.  On the other hand, for any $p \in \Omega$ and any $z \in \R$, it follows from \ref{dgaux} and \ref{Auaux}(i) that
\begin{align}
\label{Egauxexp}
\exp^{\R^3, \gaux}_p(z \nu(p)) 
= 
\cos \left( \frac{z}{|p|}\right) p + |p| \sin \left(\frac{z}{|p|}\right) \nu(p).
\end{align}
By combining these observations, it follows that
\begin{align*}
(\Pi^{\gaux}_P \circ X^{\R^3, g}_{\Omega, u\nu})(p)
=
p \sqrt{1+ (u(p)/|p|)^2},
\end{align*}
and therefore that requesting the equation $\DDD_V = \Pi^{\gaux}_P \circ  X^{\R^3,g}_{U, u \nu}$ uniquely determines $V$ by
\begin{align}
V(p) = p( \sqrt{1+ (u(p)/|p|)^2} - 1).
\end{align}
Item (i) follows from this.

Next, it follows from \eqref{Egauxexp}, the smallness assumption on $u$ in \ref{Auaux}(ii), and the definitions that the commutativity of \eqref{ediag3} is equivalent to (ii).

Finally, we have by estimating using (ii) that 
\begin{align*}
\| w - u \circ \DDD^{-1}_V : C^k(\Utilde_V, g)\| \leq C(k) \| u \circ \DDD^{-1}_V : C^k(\Utilde_V, g) \|^3. 
\end{align*}
On the other hand, by combining \ref{LVmvt} and (i) we have that
\begin{align*}
\| u \circ \DDD^{-1}_V - u : C^k(\Utilde_V, g) \| \leq
C(k) \| u : C^{k+1}(U, g)\| \| u : C^k(U, g) \|^2.
\end{align*}
Combining the preceding completes the proof of (iii).
\end{proof}

\bibliographystyle{abbrv}
\bibliography{bibliography}

\begin{thebibliography}{10}

\bibitem{brendle}
S.~Brendle.
\newblock Embedded minimal tori in {$S^3$} and the {L}awson conjecture.
\newblock {\em Acta Math.}, 211(2):177--190, 2013.

\bibitem{Fernandez}
I.~Fernandez, L.~Hauswirth, and P.~Mira.
\newblock Free boundary minimal annuli immersed in the unit ball.
\newblock {\em arXiv:2208.14998}, 2022.

\bibitem{FraserLi}
A.~Fraser and M.~M.-c. Li.
\newblock Compactness of the space of embedded minimal surfaces with free
  boundary in three-manifolds with nonnegative {R}icci curvature and convex
  boundary.
\newblock {\em J. Differential Geom.}, 96(2):183--200, 2014.

\bibitem{fraser-disk}
A.~Fraser and R.~Schoen.
\newblock Uniqueness theorems for free boundary minimal disks in space forms.
\newblock {\em Int. Math. Res. Not. IMRN}, (17):8268--8274, 2015.

\bibitem{FSinvent}
A.~Fraser and R.~Schoen.
\newblock Sharp eigenvalue bounds and minimal surfaces in the ball.
\newblock {\em Invent. Math.}, 203(3):823--890, 2016.

\bibitem{gilbarg}
D.~Gilbarg and N.~S. Trudinger.
\newblock {\em Elliptic partial differential equations of second order}.
\newblock Classics in Mathematics. Springer-Verlag, Berlin, 2001.
\newblock Reprint of the 1998 edition.

\bibitem{kap}
N.~Kapouleas.
\newblock Minimal surfaces in the round three-sphere by doubling the equatorial
  two-sphere, {I}.
\newblock {\em J. Differential Geom.}, 106(3):393--449, 2017.

\bibitem{kapli}
N.~Kapouleas and M.~M.-c. Li.
\newblock Free boundary minimal surfaces in the unit three-ball via
  desingularization of the critical catenoid and the equatorial disc.
\newblock {\em J. Reine Angew. Math.}, 776:201--254, 2021.

\bibitem{kap-mcg-cpam}
N.~Kapouleas and P.~McGrath.
\newblock Minimal surfaces in the round three-sphere by doubling the equatorial
  two-sphere, {II}.
\newblock {\em Comm. Pure Appl. Math.}, 72(10):2121--2195, 2019.

\bibitem{KapMcGLDG}
N.~Kapouleas and P.~McGrath.
\newblock Generalizing the linearized doubling approach and new minimal
  surfaces and self-shrinkers via doubling.
\newblock {\em To appear, Camb. J. Math., arXiv:2001.04240}, 2020.

\bibitem{KapWiygul}
N.~Kapouleas and D.~Wiygul.
\newblock Free-boundary minimal surfaces with connected boundary in the $3
  $-ball by tripling the equatorial disc.
\newblock {\em To appear, J. Differential Geom., arXiv:1711.00818}, 2017.

\bibitem{Aram}
A.~L. Karakhanyan.
\newblock Capillary surfaces arising in singular perturbation problems.
\newblock {\em Anal. PDE}, 13(1):171--200, 2020.

\bibitem{KusnerMcGrath}
R.~Kusner and P.~McGrath.
\newblock On embedded free boundary minimal annuli in the unit ball.
\newblock {\em To appear, Amer. J. Math., arXiv:2011.06884}, 2020.

\bibitem{CCLiu}
S.~Liu and Z.~Yu.
\newblock A note on free boundary minimal annulus.
\newblock {\em arXiv.1910.01958}, 2019.

\bibitem{McGrath}
P.~McGrath.
\newblock A characterization of the critical catenoid.
\newblock {\em Indiana Univ. Math. J.}, 67(2):889--897, 2018.

\bibitem{Nadirashvili}
N.~Nadirashvili and A.~V. Penskoi.
\newblock Free boundary minimal surfaces and overdetermined boundary value
  problems.
\newblock {\em arXiv:1812.08943}, 2018.

\bibitem{Nitsche}
J.~C.~C. Nitsche.
\newblock Stationary partitioning of convex bodies.
\newblock {\em Arch. Rational Mech. Anal.}, 89(1):1--19, 1985.

\bibitem{Seo}
D.-H. Seo.
\newblock Sufficient symmetry conditions for free boundary minimal annuli to be
  the critical catenoid.
\newblock {\em arXiv:2112.11877}, 2021.

\end{thebibliography}

\end{document}